\documentclass[a4paper,11pt]{article}
\usepackage{stmaryrd}
\usepackage{amsfonts}
\usepackage{bbm}
\usepackage{amscd}
\usepackage{mathrsfs}
\usepackage{latexsym,amssymb,amsmath,amscd,amscd,amsthm,amsxtra,xypic}
\usepackage[dvips]{graphicx}
\usepackage[utf8]{inputenc}
\usepackage[T1]{fontenc}
\usepackage{enumerate}
\usepackage{lmodern}
\usepackage{amssymb}
\usepackage[all]{xy}
\usepackage{ifpdf}

\ifpdf
\usepackage[colorlinks=true,linkcolor=blue,final,hyperindex,citecolor=red]{hyperref}
\else
\usepackage[colorlinks,final,hyperindex,hypertex]{hyperref}
\fi

\usepackage{nicefrac,mathtools,enumitem}
\usepackage{microtype}
\setlength{\parindent}{22pt}
\setlength{\parskip}{5pt}
\setlength{\baselineskip}{9pt}
\setlength{\textheight}{22.5true cm}
\setlength{\textwidth}{16true cm}
\oddsidemargin 0pt
\raggedbottom
\headsep=0pt

\usepackage{indentfirst}
\usepackage{cite}

\usepackage{geometry}

\allowdisplaybreaks
\topmargin -.8cm \textheight 22.8cm \oddsidemargin 0cm \evensidemargin -0cm \textwidth 16.3cm
\newtheorem{defn}{Definition}[section]
\newtheorem{thm}[defn]{Theorem}
\newtheorem{lem}[defn]{Lemma}
\newtheorem{prop}[defn]{Proposition}
\newtheorem{cor}[defn]{Corollary}
\newtheorem{ex}[defn]{Example}
\newtheorem{re}[defn]{Remark}

\numberwithin{equation}{section}



\newcommand{\emptycomment}[1]{}


\begin{document}

\title{\sc Bialgebras, Manin triples of Malcev-Poisson algebras and post-Malcev-Poisson algebras}
\author{Fattoum Harrathi$^2$ \footnote{harrathifattoum285@gmail.com}\ ,  Sami Mabrouk$^2$ \footnote{mabrouksami00@yahoo.fr}\ ,\\ 
Nasser Nawel$^1$ \footnote{nassernawel01@gmail.com}\ , Sergei Silvestrov$^3$ \footnote{sergei.silvestrov@mdu.se, (corresponding author)}}
\date{{\small{$^{1}$   Faculty of Sciences, University of Sfax,   BP
1171, 3000 Sfax, Tunisia}\\
 {\small {$^2$ Faculty of Sciences, University of Gafsa, BP 2100, Gafsa, Tunisia }\\
{\small {$^3 $ M\"{a}lardalen University,
Division of Mathematics and Physics,} \\
{\small
School of Education, Culture and Communication,} \\
\nopagebreak
{\small \hspace{0.5 cm} Box 883, 72123 V\"{a}steras, Sweden}}}}} 

\maketitle


\abstract{A Malcev-Poisson algebra is a Malcev algebra together with a commutative
associative algebra structure related by a Leibniz rule. In this paper, we introduce the  notion of Malcev-Poisson bialgebra as an analogue of a Malcev bialgebra and establish the equivalence between matched pairs, Manin triples and Malcev-Poisson bialgebras. Moreover, we introduce a new algebraic structure, called post-Malcev-Poisson algebras. Post-Malcev-Poisson algebras can be viewed as the underlying algebraic
structures of weighted relative Rota-Baxter operators on Malcev-Poisson algebras.} 

\vspace{0.3cm}
\noindent {\bf Keywords  and phrases:} associative algebra, Malcev algebra, Malcev bialgebra,  Malcev-Poisson bialgebra, post-Malcev-Poisson algebra, representation, matched pair, Manin triple, relative Rota-Baxter operator, Yang-Baxter equation. 

\vspace{0.3cm}
\noindent {\bf MSC2020:} 17A30, 16D10, 	17B63, 17D10, 17B38

\tableofcontents
\section{Introduction}
\textbf{Malcev-Poisson  algebras.}
The notion of  Malcev algebras was introduced by Malcev \cite{malcev}, who called these objects Moufang-Lie algebras.
 Malcev algebras play an important role in the geometry of smooth loops.   Just as the tangent
 algebra of a Lie group is a Lie algebra, the tangent algebra of a locally analytic Moufang loop is a Malcev algebra
 \cite{Kerdman,kuzmin2,malcev,nagy,sabinin}.  The reader is referred to \cite{gt,myung,okubo} for discussions about the relationships between Malcev algebras, exceptional Lie algebras and physics.
Closely related to Malcev algebras are alternative algebras.  An alternative algebra is an algebra whose associator is an alternating function.  In particular, all associative algebras are alternative, but there are plenty of non-associative alternative algebras, such as the octonions.  Roughly speaking, alternative algebras are related to Malcev algebras as associative algebras are related to Lie algebras.  Indeed, as Malcev observed in \cite{malcev}, every alternative algebra $A$ is Malcev-admissible.  There are many Malcev-admissible algebras that are not alternative; see, e.g., \cite{myung}.  

Poisson algebras were originally coming from the study of the Hamiltonian mechanics and then appeared in many fields in mathematics and mathematical physics. In \cite{Shestakov}, Shestakov was the first who generalized the
notion of a Poisson algebra to that of a Malcev-Poisson algebra and studied this
class of algebras and some related problems. Malcev–Poisson algebra  is defined to be a linear space endowed
with a Malcev bracket and a commutative associative structure which are satisfying the Leibniz rule. Recently Malcev-Poisson-Jordan algebras were presented in \cite{Said} as a natural reformulation of Poisson algebras endowed with a Malcev structure
and Jordan conditions (instead of the Lie structure and associativity, respectively) related by a Leibniz identity. Furthermore, in the referred paper
the authors presented Malcev-Poisson-Jordan structures on some classes
of Malcev algebras, also study the concept of pseudo-Euclidean Malcev-Poisson-Jordan algebras and nilpotent pseudo-Euclidean Malcev-Poisson-Jordan algebras are defined.

\textbf{Bialgebras.} A bialgebra structure is obtained from a corresponding set of comultiplication together with a set of compatibility conditions between the multiplication and the comultiplication. For example, for a finite-dimensional linear space $V$ with a given algebraic structure, this can be achieved by equipping the dual space $V^*$ with the same algebraic
structure and a set of compatibility conditions between the structures on $V$ and those on $V^*$.
One important reason for the usefulness of the Lie bialgebra is that it has a coboundary theory, which
leads to the construction of Lie bialgebras from solutions of the classical Yang-Baxter equation.
The origin of the Yang-Baxter-equations is purely physics. They were first introduced by Baxter, and Yang in \cite{R.J1,R.J2, yang}.
The notion of mock-Lie bialgebra was introduced in \cite{BCHM} as an equivalent to a Manin triple of mock-Lie algebras. 
The notion of Malcev bialgebra was introduced by Vershinin in \cite{versh}  as an analogue of a Lie bialgebra
(also see \cite{goncharov}).

\textbf{Yang–Baxter Equation and relative Rota-Baxter algebras.}
A class of Malcev bialgebras (coboundary
cases) are obtained from the solutions of an algebraic equation in a Malcev
algebra, which is an analogue of the classical Yang-Baxter equation (CYBE)
in a Lie algebra (\cite{goncharov}). It is called Malcev Yang-Baxter equation (MYBE)
for convenience. The CYBE arose from the study of inverse scattering theory in 1980s. Later it was recognized as the "semi-classical
limit" of the quantum Yang-Baxter equation which was encountered by C. N. Yang in the
computation of the eigenfunctions of a one-dimensional fermion gas with delta function
interactions \cite{yang} and by R. J. Baxter in the solution of the eight vertex model in statistical
mechanics \cite{baxter}. The study of the CYBE is also related to classical integrable systems and
quantum groups (see \cite{Chari} and the references therein).
An important approach in the study of the CYBE was through the interpretation of its
tensor form in various operator forms which proved to be effective in providing solutions of
the CYBE, in addition to the well-known work of Belavin and Drinfeld \cite{BelavinDrinfeld}.

In the case of Lie algebras, it was Semenov-Tian-Shansky who first introduced
certain operator form of the CYBE (\cite{Semenov}). Later Kupershmidt introduced
a notion of $\mathcal{O}$-operator of a Lie algebra as a generalization of (the operator
form of) the CYBE in a Lie algebra \cite{K2}. Nevertheless, the $\mathcal{O}$-operators of
Lie algebras play more interesting roles in the study of the CYBE. In fact, a
skew-symmetric solution of the CYBE is exactly a special $\mathcal{O}$-operator (associated
to the coadjoint representation) and more importantly, there are a kind
of algebraic structures behind the $\mathcal{O}$-operators of Lie algebras and the related
CYBE, namely, pre-Lie algebras, in the following sense: the $\mathcal{O}$-operators of
Lie algebras provide a direct relationship between Lie algebras and pre-Lie
algebras and in the invertible cases, they provide a necessary and sufficient
condition for the existence of a compatible pre-Lie algebra structure on a Lie
algebra, and as an immediate consequence, there are some solutions of the CYBE
in certain Lie algebras obtained from pre-Lie algebras \cite{BaiNi}.

Post-Lie algebras have been introduced by Vallette in 2007 \cite{Vallette}
in connection with the homology of partition posets and the study of Koszul
operads. Also, J. L. Loday studied pre-Lie algebras and post-Lie algebras
within the context of algebraic operad triples; see for more details \cite{Loday01}. In the last decade, many works \cite{Burde16,Fard,sl2} have appeared
devoted to post-Lie algebra structures, motivated by the importance of
post-Lie algebra in geometry and in connection with generalized Lie algebra
derivations. 
Recently, Post-Malcev algebras is introduced in \cite{postMalcev}, which is  a generalization of  pre-Malcev study by S. Madariaga in  \cite{Sarra} (see also \cite{Fattoum}), 
consist of a linear space $A$ equipped with a Malcev bracket $[\cdot ,\cdot ]$
and a binary operation $\triangleright $ satisfying some compatibilties.
 

\textbf{Layout of the paper}
In Section \ref{section1}, we review the definitions of commutative associative algebras, Malcev algebras, Malcev-Poisson algebras, their representations and their dual representations. In Section \ref{section2}, we review the concepts of a Manin triple for a Malcev algebra and Malcev bialgebras, with their equivalence described in terms of matched pairs of Malcev algebras.  Section \ref{section3} focuses on the bialgebra theory of Malcev-Poisson algebras, explored through the framework of matched pairs and Manin triples. These frameworks are defined using a Malcev bialgebra, a commutative infinitesimal bialgebra, and certain compatibility conditions. In Section \ref{section4},  we analyze a solution to the Malcev-Poisson Yang–Baxter equation (MPYBE) through the framework of $\mathcal{O}$-operators. Additionally, we introduce the concept of weighted relative Rota-Baxter operators on both Malcev-Poisson algebras and post-Malcev-Poisson algebras, extending the idea of post-Poisson algebras. Specifically, a  weighted relative Rota-Baxter operator on a Malcev-Poisson algebra produces a post-Malcev-Poisson algebra, while a post-Malcev-Poisson algebra naturally induces a  weighted relative Rota-Baxter operator on the corresponding Malcev-Poisson algebra.

\section{Preliminary and basics notations} \label{section1}
In this section,we review basic definitions of commutative associative algebra, Malcev algebra, Malcev-poisson algebra, their representations, and their dual representations (see \cite{Bai2010, Shestakov, Kuzmin} for more details).

Throughout the article, all vector spaces are assumed to be over an algebraically closed field $\mathbb{K}$ of characteristic $0$ and finite-dimensional. For convenience of exposition, the standard one-to-one correspondence between linear maps
$\mu : V_1\otimes\dots\otimes V_n \rightarrow W$ and multilinear maps
$\mu : V_1\times\dots\times V_n \rightarrow W$, given by
$\mu(v_1,\dots,v_n)=\mu(v_1\otimes\dots\otimes v_n)$, is assumed whenever the same notation is used for the maps.

\begin{defn}
    An algebra is a pair $(A,\circ),$ where $A$ is a linear space and $\circ: A\times A \to A$ is a bilinear map. 
    $\textbf{Associative algebras}$ are such algebras that the associator is identically zero, $as(x,y,z)=(x\circ y)\circ z-x \circ (y\circ z)=0$, that is, $ (x\circ y)\circ z=x \circ (y\circ z)$ for all $x,y,z \in A$.
   $\textbf{Commutative algebras}$ are algebras such that the commutator is identically zero, $x\circ y-x \circ y=0$, that is, $x\circ y= y\circ x$ for all $x,y \in A$.
\end{defn}
\begin{defn}
    Let $(A,\circ)$ be an associative algebra.  A representation (or a module) of $A$
is a linear space $V$ and a linear map $\mu:A\to End(V)$ such that
\begin{equation*}
    \mu(x\circ y)=\mu(x)\mu(y), \quad \forall x,y \in A.
\end{equation*} 
The map $\mu$ is also called representation of $A$ on the linear space $V$.  
\end{defn}
\begin{ex}
If $(A,\circ)$ is an associative algebra and $L: A \to End(A)$ is defined by $L(x)(y) = x \circ y$ for all $x, y \in A$, then $L(x):A\to A$ is a linear map on $A$ satisfying $L(x\circ y)(z)=(x\circ y)\circ z=x\circ (y\circ z)=L(x)L(y)(z)$ for all $x,y,z\in A$, which is equivalent to $L(x\circ y)=L(x)L(y)$ as operator equality in $End(A)$, meaning that $(A,L)$ is a representation of $(A,\circ)$. This representation is called the \textbf{regular representation} of $(A,\circ)$.
\end{ex}

 Let $V$ be a linear space. For a linear map $\theta : A \to End(V )$, define a linear map $\theta^*:A \to End(V^*)$ by
\begin{equation} \label{defeq-dualrepMA}
    \langle \theta^*(x)\xi,v\rangle=-\langle\xi,\theta(x)v\rangle,\quad \forall x\in A,\xi\in V^*,v\in V,
\end{equation}
where $\langle \cdot,\cdot \rangle$ is the ordinary pair between $V$ and $V^*$. If $(V,\mu )$ is a representation of a commutative
associative algebra $(A, \circ)$, then $(V^*,-\mu^*)$
 is also a representation of $(A, \circ)$.  In particular,  $(A^*,-L^*)$
is a representation of $(A, \circ).$

Now recall the notion of matched pairs of commutative associative algebras introduced by C. Bai  \cite{Bai2010} (See also \cite{LuiBai, NiBai}).

\begin{defn}
  If $(A_1,\circ_1)$ and $(A_2,\circ_2)$ are commutative associative algebras, and if linear maps $\mu_1:A_1\to End(A_2)$ and $\mu_2:A_2\to End(A_1)$ satisfy that $(A_2,\mu_1)$ and $(A_1,\mu_2)$ are representations of $A_1$ and $ A_2$ respectively, and for any $x_1,y_1 \in A_1$ and $x_2,y_2 \in A_2,$ 
\begin{eqnarray}
    \mu_1(x_1)(x_2\circ_2 y_2)=(\mu_1(x_1)x_2)\circ_2 y_2+\mu_1(\mu_2(x_2)x_1)y_2,\\
    \mu_2(x_2)(x_1\circ_1 y_1)=(\mu_2(x_2)x_1)\circ_1 y_1+\mu_2(\mu_1(x_1)x_2)y_1,
\end{eqnarray}
then $(A_1,A_2, \mu_1,\mu_2)$ is called a \textbf{matched pair of commutative associative algebras}. 
\end{defn}
\begin{prop}\cite{Bai2010}
 For any commutative associative algebras $(A_1,\circ_1)$ and $(A_2,\circ_2)$ and linear maps   $\mu_1:A_1\to End(A_2)$ and $\mu_2:A_2\to End(A_1)$  define a bilinear map $"\circ"$  on $A_1\oplus A_2$ by 
\begin{equation}\label{Matass}
    (x_1+x_2)\circ (y_1+y_2)=x_1 \circ_1 y_1+\mu_2(x_2)y_1+\mu_2(y_2)x_1+x_2\circ_2 y_2+\mu_1(x_1)y_2+\mu_1(y_1)x_2
    .
\end{equation}
Then $(A_1\oplus A_2, \circ)$  is a  commutative associative algebra if and only if $(A_1,A_2, \mu_1,\mu_2)$ is  a matched pair of  commutative
associative algebras. In this
case, we denote this commutative
associative algebras by  $A_1\, \bowtie A_2$. Moreover, every commutative
associative algebra which is the direct
sum of the underlying linear spaces of two subalgebras can be obtained from a matched pair of commutative
associative
algebras.
\end{prop}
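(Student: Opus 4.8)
The plan is to separate the two algebraic axioms. First I observe that commutativity of $\circ$ on $A_1\oplus A_2$ is automatic and carries no information: the product \eqref{Matass} is manifestly symmetric under exchanging $x_1+x_2$ with $y_1+y_2$, since $\circ_1,\circ_2$ are commutative and the four cross terms $\mu_2(x_2)y_1+\mu_2(y_2)x_1+\mu_1(x_1)y_2+\mu_1(y_1)x_2$ are invariant under the swap. Hence the entire content of the statement lies in associativity.

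For associativity, since the associator $as(X,Y,Z)=(X\circ Y)\circ Z-X\circ(Y\circ Z)$ is trilinear, it suffices to test it on homogeneous triples $(X,Y,Z)$ in which each entry lies purely in $A_1$ or purely in $A_2$, giving $2^3=8$ cases. The two pure cases vanish immediately, because $\circ$ restricts to the associative products $\circ_1,\circ_2$ on the subalgebras. For each of the six mixed cases I would substitute \eqref{Matass}, expand, and project the resulting associator onto its $A_1$- and $A_2$-components separately; a given homogeneous component of the associator vanishes for all inputs exactly when a corresponding bilinear identity holds.

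Carrying this out, the case with two arguments in $A_1$ and one in $A_2$ yields, on the $A_2$-component, the relation $\mu_1(x_1\circ_1 y_1)=\mu_1(x_1)\mu_1(y_1)$, i.e.\ that $(A_2,\mu_1)$ is a representation, and on the $A_1$-component it yields the first compatibility condition of the matched-pair definition. Symmetrically, the case with two arguments in $A_2$ and one in $A_1$ produces that $(A_1,\mu_2)$ is a representation together with the second compatibility condition. The two remaining patterns, with the single $A_i$-argument in the middle position, generate no new constraints: using commutativity of $\circ_1,\circ_2$ and the fact that $\mu_1,\mu_2$ are representations (so $\mu_i(u)\mu_i(v)=\mu_i(v)\mu_i(u)$), one checks that these associators vanish as formal consequences of the four identities already obtained, indeed by writing the relevant compatibility condition for both orderings of the two like-type arguments and subtracting. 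This settles both implications simultaneously: associativity forces the four matched-pair identities, and conversely those identities make every associator vanish, so $(A_1\oplus A_2,\circ)=A_1\bowtie A_2$ is a commutative associative algebra iff $(A_1,A_2,\mu_1,\mu_2)$ is a matched pair.

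Finally, for the ``moreover'' assertion, suppose a commutative associative algebra $(A,\circ)$ decomposes as a linear direct sum $A=A_1\oplus A_2$ of two subalgebras. For $x_1\in A_1$ and $x_2\in A_2$ I would define $\mu_2(x_2)x_1$ and $\mu_1(x_1)x_2$ to be the $A_1$- and $A_2$-components of $x_1\circ x_2$, giving linear maps $\mu_2:A_2\to End(A_1)$ and $\mu_1:A_1\to End(A_2)$. Because $A_1,A_2$ are subalgebras, the product of $x_1+x_2$ and $y_1+y_2$ expands, by bilinearity and commutativity of $\circ$, precisely into the right-hand side of \eqref{Matass}; applying the equivalence just proved then shows $(A_1,A_2,\mu_1,\mu_2)$ is a matched pair. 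I expect the only real obstacle to be bookkeeping: correctly splitting each mixed associator into its two homogeneous components and matching them to the right representation or compatibility identity, while exploiting commutativity to avoid redundant cases.
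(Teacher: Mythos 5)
Your proof is correct. Note that the paper itself offers no proof of this proposition --- it is quoted from Bai's work \cite{Bai2010} --- so there is no in-paper argument to compare against; your direct verification is the standard one and it goes through. Specifically: commutativity of $\circ$ is indeed automatic from the symmetry of \eqref{Matass}; the associator identity $as(X,Y,Z)=-as(Z,Y,X)$, valid for any commutative product, reduces the six mixed homogeneous cases to four; the pattern $(A_1,A_1,A_2)$ yields $\mu_1(x_1\circ_1 y_1)=\mu_1(x_1)\mu_1(y_1)$ on the $A_2$-component and the compatibility condition $\mu_2(z_2)(x_1\circ_1 y_1)=(\mu_2(z_2)x_1)\circ_1 y_1+\mu_2(\mu_1(x_1)z_2)y_1$ on the $A_1$-component (this is the paper's \emph{second} displayed matched-pair identity, not the first --- your labels are swapped relative to the paper's numbering, a purely cosmetic point); and the middle-position patterns such as $(A_1,A_2,A_1)$ are, as you say, consequences: their $A_2$-component vanishes because $\mu_1(x_1)\mu_1(y_1)=\mu_1(x_1\circ_1 y_1)=\mu_1(y_1)\mu_1(x_1)$, and their $A_1$-component is the difference of the two orderings of the compatibility identity. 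The reconstruction of $\mu_1,\mu_2$ from a direct-sum decomposition in the ``moreover'' part is likewise correct, with commutativity of $\circ$ ensuring the cross terms land in the form \eqref{Matass}.
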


\begin{defn}
  A $\textbf{Malcev algebra}$ is a non-associative algebra $A$ with an anti-symmetric bilinear multiplication (bracket) $[\cdot,\cdot]:A\times A\to A$ that satisfies the Malcev identity for $x,y,z \in A$,
\begin{equation}
\label{maltsev}
J(x,y,[x,z]) = [J(x,y,z),x],
\end{equation}
where $J(x,y,z) = [[x,y],z] + [[z,x],y] + [[y,z],x]$ is the Jacobian.

Expanding the Jacobian, since characteristic of $\mathbb{K}$ is not $2$, the Malcev identity \eqref{maltsev} is equivalent to Sagle's identity \textnormal{\cite{sagle61Malcevalgebras}},
for $x,y,z,t\in A:$
\begin{equation}\label{maltsev2}
 [[x, z], [y, t]] = [[[x, y], z], t] + [[[y, z], t], x] + [[[z, t], x], y] + [[[t, x], y], z].   
\end{equation}
 \end{defn}
 

\begin{ex} \label{ex-Malcevalgebras}
\begin{enumerate}[label=\upshape{\arabic*.}, ref=\upshape{\arabic*}, labelindent=5pt, leftmargin=*] 
\item \label{ex1-Malcevalgebras}
Lie algebras are Malcev algebras since the multiplication is anti-symmetric and Jacobian $J(x,y,z)=0$, for all $x,y,z\in A$ 
\item \label{ex2-Malcevalgebras} 
Alternative algebras (that is algebras in which the associator is alternative mapping) are Malcev admissible, that is 
any alternative algebra $(A,*)$ is a Malcev algebra with respect to the commutator product $[x,y]=x*y-y*x$. (see \textnormal{\cite{sagle61Malcevalgebras,myung,ElduqueMyung-MutAltAlgs1994}})
\end{enumerate}
\end{ex}

\begin{ex}[\!\!\cite{hegazi}]\label{ex1}
 Let $A$ be a $6$-dimensional linear space with basis $\{e_1,e_2,e_3,e_4,e_5,e_6\}.$  Define  the non-zero braket operation on $A$ by:
    \begin{equation*}
        [e_1,e_2]=e_3, \quad [e_1,e_3]=e_6,\quad [e_2,e_4]=e_6,\quad [e_3,e_4]=e_5.
    \end{equation*}
    Then  $(A, [\cdot,\cdot])$  is a Malcev algebra.
\end{ex}
 
\begin{defn} \label{df:repmalcevalgebra}
A $\textbf{representation}$ (or $\textbf{a module}$) of a Malcev algebra $(A, [\cdot, \cdot])$ on
a linear space $V$ is a linear map $\varrho: A\rightarrow End(V)$ such that, for all
  $x,y\in A$,
\begin{equation}\label{representation}
\varrho([[x, y], z]) = \varrho(z)\varrho(y)\varrho(x) - \varrho(y)\varrho(x)\varrho(z) + \varrho(x)\varrho([y,z]) + \varrho([x, z])\varrho(y).
\end{equation}
We denote this  representation  by $(V, \varrho)$.
\end{defn}
\begin{ex} \label{ex-RepresntationsMalcevalgebras}
    Let $(A,[\cdot,\cdot])$ a Malcev algebra and $\mathrm{ad}:A \to End(A)$ be a linear map defined by
$\mathrm{ad}(x)(y)=[x,y],\ 
\forall x,y\in A.$
Then $(A,\mathrm{ad})$ is a representation of the Malcev algebra,  which is called the \textbf{adjoint representation}. In fact, the identity \eqref{maltsev2} can be written as follows, for $x,y,z\in A,$
\begin{align*}
    \mathrm{ad}([[x, y], z]) (t)&= \mathrm{ad}(z)\mathrm{ad}(y)\mathrm{ad}(x)(t) - \mathrm{ad}(y)\mathrm{ad}(x)\mathrm{ad}(z)(t) \\&
    + \mathrm{ad}(x)\mathrm{ad}([y,z])(t) + \mathrm{ad}([x, z])\mathrm{ad}(y)(t).
    \end{align*}
\end{ex}

Representations of a Malcev algebra can be characterized in terms of semi-direct product Malcev algebras as follows.

\begin{prop}
  Let $(A,[\cdot,\cdot])$ be a Malcev algebra, $V$ be a linear space and $\varrho:A\to End(V)$ be a linear map. Then $(V, \varrho)$ is a representation of a Malcev algebra $(A,[\cdot,\cdot])$ if and only if $A\oplus V$ is a Malcev algebra
(the \textbf{semi-direct product}) with the multiplication on $A\oplus V$ by
\begin{equation}
    [x+u,y+v]=[x,y]+\varrho(x)v-\varrho(y)u, \quad \forall x,y\in A,u,v\in V.
\end{equation}
\end{prop}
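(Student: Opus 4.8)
The plan is to verify directly that the bracket
$$[x+u,y+v]=[x,y]+\varrho(x)v-\varrho(y)u$$
makes $A\oplus V$ a Malcev algebra exactly when \eqref{representation} holds. First I would dispose of the two cheap points: the operation is manifestly bilinear, and it is anti-symmetric, since the $A$-component $[x,y]$ is anti-symmetric and the $V$-component $\varrho(x)v-\varrho(y)u$ changes sign under the interchange $(x,u)\leftrightarrow(y,v)$. With anti-symmetry secured, and since $\mathrm{char}\,\mathbb{K}\neq 2$, testing the Malcev identity \eqref{maltsev} is equivalent to testing Sagle's identity \eqref{maltsev2}. I would work with \eqref{maltsev2} because it is multilinear in its four arguments, which gives the cleanest bookkeeping.

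Next I would substitute $X=x+u$, $Y=y+v$, $Z=z+w$, $T=t+s$ into \eqref{maltsev2} and split each iterated bracket into its $A$-part and $V$-part via the defining formula. The $A$-part of any bracket depends only on the $A$-components, so the $A$-component of \eqref{maltsev2} on $A\oplus V$ is precisely Sagle's identity in $A$, which holds by hypothesis. All the content therefore lives in the $V$-component, which is linear in each of $u,v,w,s$; by multilinearity the $V$-component identity splits into four independent conditions, one for the coefficient of each of these inputs.

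I would then compute the condition coming from $s$ by setting $u=v=w=0$, so that $X,Y,Z$ are pure $A$-elements and only $T$ carries a $V$-part. A short expansion collapses the left-hand side of \eqref{maltsev2} to $\varrho([x,z])\varrho(y)s$ and the right-hand side to
$$\big(\varrho([[x,y],z])-\varrho(x)\varrho([y,z])+\varrho(y)\varrho(x)\varrho(z)-\varrho(z)\varrho(y)\varrho(x)\big)s,$$
and equating the two and rearranging reproduces \eqref{representation} verbatim. This already yields one implication: if $A\oplus V$ is Malcev, extracting the $s$-coefficient forces \eqref{representation}, so $(V,\varrho)$ is a representation.

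For the converse I would avoid repeating the expansion three more times by exploiting the symmetry of \eqref{maltsev2}: both sides are invariant under the cyclic permutation $(X,Y,Z,T)\mapsto(T,X,Y,Z)$. This symmetry carries the $s$-condition (the $V$-part in the last slot) onto the three conditions coming from the $V$-parts in the remaining slots, each of which is again an instance of \eqref{representation} with $x,y,z$ ranging over the pure-$A$ arguments. Since \eqref{representation} is assumed for all triples, all four coefficient conditions hold, the full $V$-component of \eqref{maltsev2} vanishes, and $A\oplus V$ is Malcev. The only genuine work is the term-by-term bookkeeping of the $V$-valued contributions in the single $s$-computation; the main point to get right there is the sign pattern generated by the iterated action of the operators $\varrho(\cdot)$, after which the match with \eqref{representation} and the cyclic-symmetry reduction are immediate.
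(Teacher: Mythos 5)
Your proof is correct. Note that the paper states this proposition without giving any proof, so there is nothing to compare against; your direct verification is the natural argument. The key points all check out: anti-symmetry of the semi-direct product bracket is immediate, so (by the characteristic-zero hypothesis) testing Sagle's identity \eqref{maltsev2} suffices; the $V$-part of the bracket has no quadratic contribution in the $V$-components, so the $V$-part of Sagle's identity is indeed jointly linear in $u,v,w,s$ with no cross terms, which justifies the splitting into four coefficient conditions; your $s$-coefficient computation is accurate and reproduces \eqref{representation} exactly (giving the ``only if'' direction); and the invariance of both sides of \eqref{maltsev2} under $(X,Y,Z,T)\mapsto(T,X,Y,Z)$, which uses only bilinearity and the already-established anti-symmetry, correctly transports that single condition onto the other three slots, so assuming \eqref{representation} for all triples kills the entire $V$-component while the $A$-component vanishes because $A$ is Malcev. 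The cyclic-symmetry reduction is a genuine economy over the brute-force route of expanding all four coefficient conditions separately.
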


\begin{ex}
    Let  $(A,[\cdot,\cdot])$ be a Malcev algebra and $(V, \varrho)$ a representation of $(A ,[\cdot,\cdot])$. Then  $(A\oplus V,\varrho_{A\oplus V})$ is a representation of $(A,[\cdot,\cdot])$ where $\varrho_{A\oplus V}:A \to End(A\oplus V)$ is defined by 
    \begin{equation*}
      \varrho_{A\oplus V}(x)(y +v)=[x, y] +\varrho(x)v.  
    \end{equation*}
     Indeed,  
     \begin{align*}
       \varrho_{A\oplus V}([[x, y], z])(t+v) &-\varrho_{A\oplus V}(z)\varrho_{A\oplus V}(y)\varrho_{A\oplus V}(x)(t+v) + \varrho_{A\oplus V}(y)\varrho_{A\oplus V}(x)\varrho_{A\oplus V}(z)(t+v)\\& - \varrho_{A\oplus V}(x)\varrho_{A\oplus V}([y,z])(t+v) - \varrho_{A\oplus V}([x, z])\varrho_{A\oplus V}(y)(t+v)\\&=[[[x,y],z],t]+\rho([[x,y],z])v-[z,[y,[x,t]]]-\rho(z)\rho(y)\rho(x)v\\&+[y,[x,[z,t]]]+\rho(y)\rho(x)\rho(z)v-[x,[[y,z],t]]-\rho(x)\rho([y,z])v\\&-[[x,z],[y,t]]-\rho([x,z])\rho(y)v =0.   
     \end{align*}
\end{ex}

To relate matched pairs of Malcev algebras to Malcev bialgebras and Manin triples for Malcev algebras in the next section, we need the notions of the coadjoint representation, which is the dual
representation of the adjoint representation. 
\begin{prop} \label{prop-dualrepMA}
    Let $(V,\varrho)$ be a representation of a Malcev algebra $(A,[\cdot,\cdot])$, then $(V^*,\varrho^*)$ is  a representation of $(A,[\cdot,\cdot]).$
\end{prop}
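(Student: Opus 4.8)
The plan is to check the Malcev representation identity \eqref{representation} for $\varrho^*$ directly, by dualizing the identity already known to hold for $\varrho$. First I would record the two facts that drive the computation. By the defining relation \eqref{defeq-dualrepMA}, the operator $\varrho^*(x)$ on $V^*$ equals $-\varrho(x)^{T}$, where $\varrho(x)^{T}$ is the ordinary transpose fixed by $\langle \varrho(x)^{T}\xi,v\rangle=\langle\xi,\varrho(x)v\rangle$; and transposition reverses composition, $(ST)^{T}=T^{T}S^{T}$, so it is an anti-homomorphism from $End(V)$ to $End(V^*)$. Thus every factor $\varrho^*(w)$ contributes a sign $-1$ and every product has its order reversed when I pass to transposes.

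Next I would write out the identity \eqref{representation} with $\varrho$ replaced by $\varrho^*$ and substitute $\varrho^*(w)=-\varrho(w)^{T}$ everywhere. Each cubic term collapses cleanly: for instance $\varrho^*(z)\varrho^*(y)\varrho^*(x)=-(\varrho(x)\varrho(y)\varrho(z))^{T}$ and $-\varrho^*(y)\varrho^*(x)\varrho^*(z)=(\varrho(z)\varrho(x)\varrho(y))^{T}$, while the mixed terms give $\varrho^*(x)\varrho^*([y,z])=(\varrho([y,z])\varrho(x))^{T}$ and $\varrho^*([x,z])\varrho^*(y)=(\varrho(y)\varrho([x,z]))^{T}$. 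Since the left-hand side is $-\varrho([[x,y],z])^{T}$, after stripping the common (injective, linear) transpose the whole statement reduces to the single operator identity on $V$
\begin{equation*}
\varrho([[x,y],z])=\varrho(x)\varrho(y)\varrho(z)-\varrho(z)\varrho(x)\varrho(y)-\varrho([y,z])\varrho(x)-\varrho(y)\varrho([x,z]).
\end{equation*}

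Finally I would recognize this reduced identity as nothing but \eqref{representation} applied to the permuted triple $(y,x,z)$. Indeed, evaluating \eqref{representation} on $(y,x,z)$ computes $\varrho([[y,x],z])$ as exactly the right-hand side above, and the antisymmetry of the bracket gives $[[y,x],z]=-[[x,y],z]$, which turns that instance into the displayed identity after multiplying by $-1$. Hence the reduced identity holds automatically and $(V^*,\varrho^*)$ is a representation. The only delicate point is the bookkeeping: I must track the sign from $\varrho^*(x)=-\varrho(x)^{T}$ together with the order reversal in $(ST)^{T}=T^{T}S^{T}$, and then correctly match the outcome against the axiom for $(y,x,z)$. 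I expect this sign-and-order matching to be the sole obstacle; notably, no appeal to Sagle's identity \eqref{maltsev2} is needed, since the antisymmetry of the bracket alone reconciles the two forms.
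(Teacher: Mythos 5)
Your proof is correct and complete. The paper itself states Proposition \ref{prop-dualrepMA} with no proof at all, so there is no argument of the authors to compare against; your verification supplies the missing one, and it is the natural argument. I checked the two pivots of your computation. First, with $\varrho^*(w)=-\varrho(w)^{T}$ and $(ST)^{T}=T^{T}S^{T}$, the cubic terms in the axiom for $\varrho^*$ acquire a factor $(-1)^{3}$ with reversed order and the quadratic terms a factor $(-1)^{2}$, so after stripping the (injective, linear) transpose and an overall sign the axiom for $\varrho^*$ is equivalent to
\[
\varrho([[x,y],z])=\varrho(x)\varrho(y)\varrho(z)-\varrho(z)\varrho(x)\varrho(y)-\varrho([y,z])\varrho(x)-\varrho(y)\varrho([x,z]).
\]
Second, writing \eqref{representation} for the triple $(y,x,z)$ gives
$\varrho([[y,x],z])=\varrho(z)\varrho(x)\varrho(y)-\varrho(x)\varrho(y)\varrho(z)+\varrho(y)\varrho([x,z])+\varrho([y,z])\varrho(x)$,
and multiplying by $-1$ while using $[[y,x],z]=-[[x,y],z]$ yields exactly the displayed identity. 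So the dualized axiom is indeed an instance of the original axiom at a permuted triple, and $(V^*,\varrho^*)$ is a representation. Your closing remark is also accurate: the argument never invokes Sagle's identity \eqref{maltsev2} directly, only the validity of \eqref{representation} for $\varrho$ at all triples together with antisymmetry of the bracket; the sole risk in this proof is the sign-and-order bookkeeping, which you carried out correctly.
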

Consider the case when $V=A$  and define the linear map $\mathrm{ad}_A^*:A\to End(A^*)$ by 
 \begin{equation}
     \langle\mathrm{ad}_A^*(x)\xi,y\rangle=-\langle\xi,\mathrm{ad}(x)(y)\rangle, \quad\forall x,y \in A, \xi\in A^*.
 \end{equation}

 \begin{cor} \label{cor-coadjointrepMA}
 Let $(A,[\cdot,\cdot])$ a Malcev algebra and $(A,\mathrm{ad})$ be the adjoint representation. Then $(A^*,\mathrm{ad}_A^*)$ is a representation of $(A,[\cdot,\cdot])$ on $A^*$ which is called the coadjoint representation.
\end{cor}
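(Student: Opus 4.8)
The plan is to read this off as an immediate specialization of Proposition~\ref{prop-dualrepMA}. By Example~\ref{ex-RepresntationsMalcevalgebras}, the pair $(A,\mathrm{ad})$ is the adjoint representation of $(A,[\cdot,\cdot])$, since rewriting Sagle's identity \eqref{maltsev2} in operator form is exactly the defining identity \eqref{representation} for $\varrho=\mathrm{ad}$. I would therefore apply Proposition~\ref{prop-dualrepMA} with $V=A$ and $\varrho=\mathrm{ad}$, which yields at once that $(A^*,\mathrm{ad}^*)$ is a representation of $(A,[\cdot,\cdot])$ on $A^*$.

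The only point that then needs checking is that the dual map $\mathrm{ad}^*$ produced by the general recipe \eqref{defeq-dualrepMA} coincides with the map $\mathrm{ad}_A^*$ defined just above the statement. This is immediate from comparing the two defining pairings: \eqref{defeq-dualrepMA} with $\theta=\mathrm{ad}$ reads $\langle \mathrm{ad}^*(x)\xi,y\rangle=-\langle\xi,\mathrm{ad}(x)y\rangle$, which is verbatim the definition of $\mathrm{ad}_A^*$. Hence $(A^*,\mathrm{ad}_A^*)$ is a representation of $(A,[\cdot,\cdot])$, the coadjoint representation, as asserted.

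Should one wish to avoid invoking Proposition~\ref{prop-dualrepMA} and argue directly, the route would be to substitute $\varrho=\mathrm{ad}_A^*$ into \eqref{representation}, pair both sides against an arbitrary $y\in A$, and transport everything through the pairing using $\langle\mathrm{ad}_A^*(x)\xi,y\rangle=-\langle\xi,[x,y]\rangle$. Each operator product then becomes an iterated bracket acting inside the pairing, and after reversing the orders of composition the desired equality reduces to Sagle's identity \eqref{maltsev2} evaluated at suitable permutations of the arguments. I expect the main (indeed only) bookkeeping obstacle to lie here: the signs introduced by \eqref{defeq-dualrepMA} must be tracked carefully, since here (unlike the associative case, where the dual representation is $-\mu^*$) the map $\varrho^*$ itself is the representation, and one must confirm that the permuted instances of \eqref{maltsev2} recombine to give precisely the dualized identity rather than a spurious variant. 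Appealing to Proposition~\ref{prop-dualrepMA} is the cleaner option and is what I would present.
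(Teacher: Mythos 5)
Your proposal is correct and matches the paper's treatment: the paper offers no separate argument for this statement precisely because it is the immediate specialization of Proposition~\ref{prop-dualrepMA} to $V=A$, $\varrho=\mathrm{ad}$ (which is a representation by Example~\ref{ex-RepresntationsMalcevalgebras}), with $\mathrm{ad}_A^*$ agreeing by definition with the dual map of \eqref{defeq-dualrepMA}. Your remark on the sign convention (that for Malcev algebras $\varrho^*$ itself, rather than $-\varrho^*$ as in the commutative associative case, is the dual representation) is also consistent with the paper's conventions.
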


\begin{ex}
The Example \ref{ex-RepresntationsMalcevalgebras} 
of the representations of Malcev algebras leads to the follwing dual representations in the sense of 
\eqref{defeq-dualrepMA} and Proposition \ref{prop-dualrepMA}, and coadjoint representations of the Malcev algebras in the sense of the Corollary \ref{cor-coadjointrepMA}.       
\end{ex}

Now we consider the case of Malcev-Poisson algebras.
\begin{defn}
  \textbf{A Malcev-Poisson algebra} $(A,[\cdot,\cdot], \circ)$ is a
linear space equipped with two bilinear operations $[\cdot,\cdot],\circ:A\times A \to A$ such that $(A, [\cdot,\cdot])$ is a Malcev
algebra and $(A,\circ)$ is a commutative associative algebra and the  Leibniz rule is satisfied, that is, 
\begin{equation} \label{eq:malpoisalg}
    [x,y\circ z]=[x, y] \circ z+ y\circ [x,z], \quad\quad \forall\  x,y,z\in A.
\end{equation}   
\end{defn}
\begin{ex}
    Let $A$ be a $6$-dimensional linear space with basis $\{e_1,e_2,e_3,e_4,e_5,e_6\}.$ For the Malcev algebra $(A, [\cdot,\cdot])$ in Example \ref{ex1}, define the non-zero multiplication  on $A$ by 
    \begin{equation*}
        e_1\circ e_1=e_5, \quad e_2\circ e_2=e_6,\quad 
        e_1\circ e_4=e_5, \quad e_4\circ e_4=e_6,
    \end{equation*}
Then  $(A,\circ, [\cdot,\cdot])$  is a Malcev-Poisson algebra.
\end{ex}
\begin{defn}
    Let $(A, [\cdot,\cdot], \circ)$ be a Malcev-Poisson algebra, $V$ be a linear space and $\varrho, \mu : A \to End (V)$ be
two linear maps. Then $(V, \varrho, \mu)$ is called a \textbf{representation} (or module) of $A$ if $(V, \varrho)$
is a representation of $(A, [\cdot,\cdot])$ and $(V, \mu)$ is a representation of $(A, \circ)$ and they are compatible in the
sense that (for any $x, y \in A$):
\begin{equation}\label{eq:repmalcpoisalg1}
   \varrho(x\circ y)=\mu(y)\varrho(x)+\mu(x)\varrho(y),
\end{equation}
\begin{equation}\label{eq:repmalcpoisalg2}
    \mu([x,y])=\varrho(x)\mu(y)-\mu(y)\varrho(x).
\end{equation}
\end{defn}
\begin{thm}
    Under the notation, the direct sum $A \oplus V$ is turned into a Malcev-Poisson algebra by defining the operations by (we still denote
the operations by $[\cdot,\cdot]$ and $\circ$):
\begin{equation}
    [x_1+v_1,x_2+v_2]=[x_1,x_2]+\varrho(x_1)v_2-\varrho(x_2)v_1,
\end{equation}
\begin{equation}
    (x_1+v_1)\circ(x_2+v_2)=x_1\circ x_2+ \mu(x_1)v_2+ \mu(x_2)v_1,
\end{equation}
for all $x_1, x_2 \in A,v_1,v_2 \in V$. We denote it by $A\ltimes  V.$
\end{thm}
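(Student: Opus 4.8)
The plan is to verify the three defining axioms of a Malcev-Poisson algebra for $(A\oplus V,[\cdot,\cdot],\circ)$ one at a time: that $(A\oplus V,[\cdot,\cdot])$ is a Malcev algebra, that $(A\oplus V,\circ)$ is a commutative associative algebra, and that the two operations satisfy the Leibniz rule \eqref{eq:malpoisalg}. The first axiom comes for free: since $(V,\varrho)$ is assumed to be a representation of the Malcev algebra $(A,[\cdot,\cdot])$, the prescribed bracket on $A\oplus V$ is exactly the semi-direct product bracket, so the earlier Proposition on semi-direct products of Malcev algebras immediately yields that $(A\oplus V,[\cdot,\cdot])$ is a Malcev algebra.

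For the commutative associative axiom, commutativity of $\circ$ is visible from the symmetry of the defining formula. For associativity I would expand both $\bigl((x_1+v_1)\circ(x_2+v_2)\bigr)\circ(x_3+v_3)$ and $(x_1+v_1)\circ\bigl((x_2+v_2)\circ(x_3+v_3)\bigr)$ and compare components. The $A$-component reduces to associativity of $\circ$ on $A$. On the $V$-component, the coefficient of $v_3$ reduces to $\mu(x_1\circ x_2)=\mu(x_1)\mu(x_2)$, which is precisely the statement that $\mu$ is a representation of $(A,\circ)$, while the coefficients of $v_1$ and $v_2$ reduce to commutativity of the operators $\mu(x_i)$, which again follows from $\mu$ being a representation of the commutative associative algebra $(A,\circ)$ together with commutativity of $\circ$.

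The main content is the Leibniz rule. Writing $X=x_1+v_1$, $Y=x_2+v_2$, $Z=x_3+v_3$, I would expand $[X,Y\circ Z]$, $[X,Y]\circ Z$ and $Y\circ[X,Z]$ using the two defining formulas and compare $[X,Y\circ Z]$ with $[X,Y]\circ Z+Y\circ[X,Z]$. Comparing the $A$-components reproduces exactly the Leibniz rule \eqref{eq:malpoisalg} in $A$, so that part holds by hypothesis. On the $V$-component I would collect the coefficients of $v_1$, $v_2$ and $v_3$ separately. The coefficients of $v_3$ and of $v_2$ each reduce to the compatibility condition \eqref{eq:repmalcpoisalg2}, namely $\mu([x,y])=\varrho(x)\mu(y)-\mu(y)\varrho(x)$, applied to the pairs $(x_1,x_2)$ and $(x_1,x_3)$ respectively, while the coefficient of $v_1$ reduces to the compatibility condition \eqref{eq:repmalcpoisalg1}, namely $\varrho(x\circ y)=\mu(y)\varrho(x)+\mu(x)\varrho(y)$, applied to $(x_2,x_3)$. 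Hence the Leibniz rule on $A\oplus V$ is equivalent precisely to the two compatibility axioms defining a representation of a Malcev-Poisson algebra.

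The one genuine obstacle is the bookkeeping in this last computation: the three triple products generate nine $V$-valued terms, and the point is to see that, after grouping by $v_1$, $v_2$, $v_3$, they regroup exactly into the two compatibility conditions \eqref{eq:repmalcpoisalg1} and \eqref{eq:repmalcpoisalg2} with no leftover terms. There is nothing deeper beyond careful tracking of the operators $\varrho(x_i)$ and $\mu(x_i)$, since every identity needed is already packaged into the definition of a representation $(V,\varrho,\mu)$.
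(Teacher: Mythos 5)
Your proof is correct, and each reduction you claim does check out: the $A$-components give associativity and the Leibniz rule in $A$, the $v_3$- and $v_2$-coefficients in the Leibniz computation give \eqref{eq:repmalcpoisalg2} applied to $(x_1,x_2)$ and $(x_1,x_3)$, and the $v_1$-coefficient gives \eqref{eq:repmalcpoisalg1} applied to $(x_2,x_3)$, with no leftover terms. Note that the paper states this theorem without proof, treating it as routine; your direct component-wise verification is exactly the argument being omitted, and it follows the same expansion strategy the paper does write out for its more general statements (the proof of Proposition \ref{matchedpair} on matched pairs, and the semi-direct product proposition for $A$-module Malcev-Poisson algebras in Section \ref{section4}), of which this theorem is essentially the special case where the complementary summand carries trivial operations.
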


\begin{ex}
  If $(V, \varrho, \mu)$ is a representation of a Malcev-Poisson algebra $(A, [\cdot,\cdot], \circ),$  then $(V^*, \varrho^*, \mu^*)$ is a representation of a Malcev-Poisson algebra $(A, [\cdot,\cdot], \circ)$, too. Therefore, both $(A,\mathrm{ad}, L)$ and $(A^*,\mathrm{ad}_A^*, -L^*)$ are representations of a Malcev-Poisson algebra $(A, [\cdot,\cdot], \circ).$  
\end{ex}

\section{Matched pairs, Manin triples of Malcev algebras and  Malcev bialgebras}\label{section2}
In this section, we study  the notions of  Manin triple of a Malcev algebra and
Malcev bialgebras. The equivalence between them is interpreted in terms of matched pairs of Malcev algebras.
Let us first recall the notion of matched pairs of Malcev algebras \cite{zhang}.
Let $\left( {A_1, [\cdot,\cdot]_1} \right)$ and $\left({A_2, [\cdot,\cdot]_2}\right)$ be Malcev algebras, and let $\varrho_1: A_1\to End(A_2)$ and $\varrho_2: A_2\to End(A_1)$ be linear maps.
On the direct sum $A_1\oplus A_2$ of the underlying linear space, define a linear map $[\cdot,\cdot]: (A_1\oplus A_2) \times (A_1\oplus A_2) \rightarrow A_1\oplus A_2$ by 
\small{\begin{equation}\label{MatMal}
    [x_1+x_2,y_1+y_2]=[x_1,y_1]_1+\varrho_2(x_2)y_1-\varrho_2(y_2)x_1+[x_2,y_2]_2+\varrho_1(x_1)y_2-\varrho_1(y_1)x_2.
\end{equation}}
\begin{thm}\label{matched}
   Under the above notation, $(A_1\oplus A_2,[\cdot,\cdot])$ is a Malcev algebra if and only if $(A_2,\varrho_1)$ and $(A_1,\varrho_2)$  are representations of $\left( {A_1, [\cdot,\cdot]_1} \right)$ and $\left({A_2, [\cdot,\cdot]_2}\right)$ respectively  and for all $x_1,y_1 \in A_1\;\; \text{and} \;\;x_2,y_2 \in A_2,$  the following compatibility conditions are satisfied:
\begin{align}
    \nonumber
    &[[\varrho_2(x_2)x_1,y_1]_1,z_1]_1-[\varrho_2(x_2)[y_1,z_1]_1,x_1]_1-[\varrho_2(\varrho_1(x_1)x_2)y_1,z_1]_1\\
    \nonumber
    &\quad -[[\varrho_2(x_2)z_1,x_1]_1,y_1]_1+\varrho_2(\varrho_1(y_1))(\varrho_1(x_1)x_2)z_1+[[x_1,z_1]_1,\varrho_2(x_2)y_1]_1\\
    \nonumber
    &\quad -\varrho_2(x_2)[[x_1,y_1],z_1]_1+\varrho_2(\varrho_1(y_1)x_2)[x_1,z_1]_1+[\varrho_2(\varrho_1(z_1)x_2)x_1,y_1]_1\\
    &\quad +\varrho_2(\varrho_1([y_1,z_1])x_2)x_1-\varrho_2(\varrho_1(x_1))(\varrho_1(z_1)x_2)y_1=0,
    \label{matched1}
\\*[0.2cm]
    \nonumber
    &[[\varrho_1(x_1)x_2,y_2]_2,z_2]_2-[\varrho_1(x_1)[y_2,z_2]_2,x_2]_2-[\varrho_1(\varrho_2(x_2)x_1)y_2,z_2]_2\\
    \nonumber
    &\quad -[[\varrho_1(x_1)z_2,x_2]_2,y_2]_2+\varrho_1(\varrho_2(y_2))(\varrho_2(x_2)x_1)z_2+[[x_2,z_2]_2,\varrho_1(x_1)y_2]_2\\
    \nonumber
    &\quad -\varrho_1(x_1)[[x_2,y_2]_2,z_2]_2+\varrho_1(\varrho_2(y_2)x_1)[x_2,z_2]_2+[\varrho_1(\varrho_2(z_2)x_1)x_2,y_2]_2\\
    &\quad +\varrho_1(\varrho_2([y_2,z_2])x_1)x_2-\varrho_1(\varrho_2(x_2))(\varrho_2(z_2)x_1)y_2=0,
    \label{matched2}
\\*[0.2cm]
\nonumber
&\varrho_2(\varrho_1(x_1)[x_2,y_2]_2)y_1-\varrho_2(\varrho_1(x_1)x_2)\varrho_2(y_2)y_1+\varrho_2(x_2)[\varrho_2(y_2)x_1,y_1]_1\\
\nonumber
&\quad -[\varrho_2(y_2)\varrho_2(x_2)y_1,x_1]_1-[\varrho_2([x_2,y_2]_2)x_1,y_1]_1+\varrho_2(\varrho_1(y_1)y_2)(\varrho_2(x_2)x_1)\\
\nonumber
&\quad -\varrho_2([\varrho_1(y_1)x_2,y_2]_2)x_1-\varrho_2(y_2)(\varrho_2(x_2)[x_1,y_1]_1)+[\varrho_2(x_2)x_1,\varrho_2(y_2)y_1]_1\\
&\quad -\varrho_2(x_2)(\varrho_2(\varrho_1(x_1)y_2)y_1)+\varrho_2(\varrho_1(\varrho_2(x_2)y_1)y_2)x_1=0,\label{matched3}
\\*[0.2cm]
\nonumber
    &\varrho_1(\varrho_2(x_2)[x_1,y_1]_1)y_2-\varrho_1(\varrho_2(x_2)x_1)\varrho_1(y_1)y_2+\varrho_1(x_1)[\varrho_1(y_1)x_2,y_2]_2\\
\nonumber
    &\quad -[\varrho_1(y_1)\varrho_1(x_1)y_2,x_2]_2-[\varrho_1([x_1,y_1]_1)x_2,y_2]_2+\varrho_1(\varrho_2(y_2)y_1)(\varrho_1(x_1)x_2)\\
\nonumber
    &\quad -\varrho_1([\varrho_2(y_2)x_1,y_1]_1)x_2-\varrho_1(y_1)(\varrho_1(x_1)[x_2,y_2]_2)+[\varrho_1(x_1)x_2,\varrho_1(y_1)y_2]_2\\
    &\quad -\varrho_1(x_1)(\varrho_1(\varrho_2(x_2)y_1)y_2)+\varrho_1(\varrho_2(\varrho_1(x_1)y_2)y_1)x_2=0,\label{matched4}
\\*[0.2cm]
\nonumber
    &\varrho_2(x_2)[\varrho_2(y_2)y_1,x_1]_1-\varrho_2([\varrho_1(x_1)y_2,x_2]_2)y_1-[\varrho_2(x_2)(\varrho_2(y_2)x_1),y_1]_1\\
    \nonumber
    &\quad+ \varrho_2(y_2)[\varrho_2(x_2)x_1,y_1]_1-[\varrho_2(y_2)(\varrho_2(x_2)y_1),x_1]_1-\varrho_2([\varrho_1(y_1)x_2,y_2]_2)x_1\\
    \nonumber
    &\quad- \varrho_2(y_2)(\varrho_2(\varrho_1(x_1)x_2)y_1)-\varrho_2(x_2)(\varrho_2(\varrho_1(y_1)y_2)x_1)+\varrho_2([x_2,y_2]_2)[x_1,y_1]_1\\ &\quad+ \varrho_2(\varrho_1(\varrho_2(x_2)y_1)y_2)x_1+\varrho_2(\varrho_1(\varrho_2(y_2)x_1)x_2)y_1=0,\label{matched5}
\\*[0.2cm]
\nonumber
    &\varrho_1(x_1)[\varrho_1(y_1)y_2,x_2]_2-\varrho_1([\varrho_2(x_2)y_1,x_1]_1)y_2-[\varrho_1(x_1)(\varrho_1(y_1)x_2),y_2]_2\\
    \nonumber
    &\quad+\varrho_1(y_1)[\varrho_1(x_1)x_2,y_2]_2-[\varrho_1(y_1)(\varrho_1(x_1)y_2),x_2]_2-\varrho_1([\varrho_2(y_2)x_1,y_1]_1)x_2\\
    \nonumber
    &\quad-\varrho_1(y_1)(\varrho_1(\varrho_2(x_2)x_1)y_2)-\varrho_1(x_1)(\varrho_1(\varrho_2(y_2)y_1)x_2)+\varrho_1([x_1,y_1]_1)[x_2,y_2]_2\\ &\quad+\varrho_1(\varrho_2(\varrho_1(x_1)y_2)y_1)x_2+\varrho_1(\varrho_2(\varrho_1(y_1)x_2)x_1)y_2=0.\label{matched6}
\end{align}

\end{thm}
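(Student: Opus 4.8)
The plan is to reduce the Malcev condition for the bracket \eqref{MatMal} to its defining Sagle identity \eqref{maltsev2}, and then to separate that identity according to where the four arguments live. First, anti-symmetry of \eqref{MatMal} is immediate: interchanging $x_1+x_2$ and $y_1+y_2$ flips each of $[\cdot,\cdot]_1,[\cdot,\cdot]_2$ and turns $\varrho_2(x_2)y_1$ into $-\varrho_2(y_2)x_1$ and $\varrho_1(x_1)y_2$ into $-\varrho_1(y_1)x_2$, so the whole expression changes sign. Hence $(A_1\oplus A_2,[\cdot,\cdot])$ is Malcev if and only if Sagle's identity \eqref{maltsev2} holds for all arguments in $A_1\oplus A_2$. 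Since \eqref{maltsev2} is multilinear in its four entries, it suffices to verify it on homogeneous tuples, i.e. with each entry lying entirely in $A_1$ or in $A_2$ (the $2^4=16$ cases), which I would grade by the number $k$ of entries taken from $A_2$.

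Two symmetries cut the labour sharply. First, writing $S(x,y,z,t):=[[[x,y],z],t]+[[[y,z],t],x]+[[[z,t],x],y]+[[[t,x],y],z]-[[x,z],[y,t]]$, the expression $S$ is invariant under the cyclic shift $(x,y,z,t)\mapsto(y,z,t,x)$ for any anti-symmetric bracket: the cyclic sum is manifestly invariant, and $[[x,z],[y,t]]\mapsto[[y,t],[z,x]]=[[x,z],[y,t]]$ by anti-symmetry of both brackets. Thus homogeneous patterns in the same $C_4$-orbit produce the same identity. Second, the construction \eqref{MatMal} is invariant under the involution $\sigma$ that exchanges $1\leftrightarrow 2$ (that is $A_1\leftrightarrow A_2$, $\varrho_1\leftrightarrow\varrho_2$), so the cases $k$ and $4-k$ and the identities they produce are swapped by $\sigma$; it then suffices to treat $k\le 2$ and transport the rest.

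With this organization the cases read off as follows. For $k=0$ (and dually $k=4$ by $\sigma$), \eqref{maltsev2} collapses to Sagle's identity inside $(A_1,[\cdot,\cdot]_1)$ (resp. $(A_2,[\cdot,\cdot]_2)$), which holds by hypothesis. For $k=1$ there is a single $C_4$-orbit; expanding \eqref{MatMal} one checks that every $A_2$-valued monomial arises through a final application of $\varrho_1$ to the lone $A_2$-entry, so the $A_2$-component of $S$ assembles precisely into the representation identity \eqref{representation} for $(A_2,\varrho_1)$, while the $A_1$-component, after using anti-symmetry and the Malcev identity of $A_1$, reproduces \eqref{matched1}. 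Applying $\sigma$ to the $k=3$ case gives that $(A_1,\varrho_2)$ is a representation together with \eqref{matched2}. Finally $k=2$ breaks into exactly two $C_4$-orbits — the ``opposite'' pattern (the two $A_2$-entries in positions $1,3$) and the ``adjacent'' pattern (positions $1,2$) — and reading off the $A_1$- and $A_2$-components of each yields, after analogous simplification, the four balanced conditions \eqref{matched3}--\eqref{matched6}, with $\sigma$ pairing \eqref{matched3}$\leftrightarrow$\eqref{matched4} and \eqref{matched5}$\leftrightarrow$\eqref{matched6}. For the converse direction the same case list is traversed in reverse: granting that $(A_2,\varrho_1),(A_1,\varrho_2)$ are representations and that \eqref{matched1}--\eqref{matched6} hold, every homogeneous instance of \eqref{maltsev2} vanishes, so the bracket is Malcev.

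The conceptual skeleton above is short; the genuine work is the $k=2$ expansions. Each produces on the order of a dozen monomials in which the cross-brackets $[x_1,y_2]=-\varrho_2(y_2)x_1+\varrho_1(x_1)y_2$ and $[x_2,y_1]=\varrho_2(x_2)y_1-\varrho_1(y_1)x_2$ feed back and forth between $A_1$ and $A_2$, so one must track carefully which monomial lands in which component and repeatedly invoke anti-symmetry, the Malcev identities of $A_1$ and $A_2$, and the representation identities \eqref{representation} already extracted at level $k=1,3$, in order to collapse the surplus terms onto exactly \eqref{matched3}--\eqref{matched6}. I expect the main obstacle to be precisely this bookkeeping: matching the ``adjacent'' versus ``opposite'' $k=2$ patterns to the stated forms of \eqref{matched3}--\eqref{matched6} and confirming that no residual terms survive. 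The $\sigma$-symmetry halves this effort but does not remove it.
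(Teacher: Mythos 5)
The paper contains no proof of Theorem \ref{matched} to compare against: the result is recalled from \cite{zhang} and the text moves straight on to the definition of a matched pair, so your proposal has to be judged on its own terms. On those terms it is correct, and it is the natural direct-verification argument. Anti-symmetry of \eqref{MatMal} is immediate; the expression $S(x,y,z,t)$ built from Sagle's identity \eqref{maltsev2} is quadrilinear, so checking it on homogeneous tuples suffices; your two symmetries are genuine (the $C_4$-invariance of $S$ uses anti-symmetry twice on $[[x,z],[y,t]]$, exactly as you say, and \eqref{MatMal} is manifestly invariant under the involution $1\leftrightarrow 2$), and your orbit accounting is exactly right: $k=0,4$ are the Malcev hypotheses on $A_1,A_2$; at $k=1$ the $A_2$-component is the representation identity \eqref{representation} for $(A_2,\varrho_1)$ and the $A_1$-component is \eqref{matched1}; $k=3$ is the $\sigma$-image of this; the opposite $k=2$ orbit yields \eqref{matched5}/\eqref{matched6} and the adjacent one \eqref{matched3}/\eqref{matched4}, paired under $\sigma$ as you claim (one can see the adjacent orbit must own \eqref{matched3}--\eqref{matched4} because the double-bracket term $[\varrho_2(x_2)x_1,\varrho_2(y_2)y_1]_1$ can only come from $[[x,z],[y,t]]$ with mixed entries in positions $(1,3)$ and $(2,4)$ of the inner brackets, i.e.\ adjacent $A_2$-slots). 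One remark that makes the job lighter than you anticipate: no simplification by anti-symmetry or by the Malcev identities of $A_1$, $A_2$ is needed at any stage. Expanding $S(x_1,y_1,z_1,x_2)$ and projecting to $A_1$ produces the eleven terms of \eqref{matched1} verbatim, and expanding the opposite pattern $S(x_1,x_2,y_1,y_2)$ and projecting to $A_1$ produces the eleven terms of \eqref{matched5} verbatim (with the understanding that the paper's expressions such as $\varrho_2(\varrho_1(y_1))(\varrho_1(x_1)x_2)z_1$ are typographical slips for $\varrho_2\bigl(\varrho_1(y_1)\varrho_1(x_1)x_2\bigr)z_1$); the displayed compatibility conditions \emph{are} the raw expansions, not simplified forms of them. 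So the bookkeeping you flag as the main obstacle closes with no residual terms, and the converse direction follows by multilinearity exactly as you outline.
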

\begin{defn}
A \textbf{matched pair} of Malcev  algebras is a quadruple $(A_1,A_2, \varrho_1,\varrho_2)$  consisting
of Malcev algebras $(A_1,[\cdot,\cdot]_1)$ and $(A_2,[\cdot,\cdot]_2)$, together with representations $\varrho_1: A_1\to End(A_2)$ and $\varrho_2: A_2\to End(A_1)$  respectively, such that the compatibility conditions
\eqref{matched1}-\eqref{matched6} are satisfied.
\end{defn}
\begin{re}
We denote the Malcev algebra defined by    \eqref{MatMal} by  $A_1\, \bowtie A_2$. It
is straightforward to show that every Malcev algebra which is a direct sum of the underlying linear spaces
of two  Malcev subalgebras can be obtained from a
matched pair of Malcev algebras as above.
\end{re}
\begin{defn}\label{inva}
A bilinear form $\mathcal{B}$ on a Malcev algebra $(A,[\cdot,\cdot])$ is called invariant if it satisfies, for any $x,y,z\in A$:
\begin{equation}
    \mathcal{B}([x,y],z)=\mathcal{B}(x,[y,z]).
\end{equation}
\end{defn}
\begin{defn}
     A Manin triple of Malcev algebras $(A, A^ {+} , A^{-})$ is a triple of Malcev algebras $A,\quad
A^{+}, \quad\text{and}\quad A^{-} $ together with a non-degenerate symmetric invariant bilinear form $\mathcal{B}(\cdot,\cdot)$ on $A$ such that the following conditions satisfied:
\begin{enumerate}[label=\upshape{\arabic*.}, ref=\upshape{\arabic*}, labelindent=5pt, leftmargin=*]
    \item $A^{+}$  and $A^{-}$  are Malcev subalgebras of $A$,
    \item $A = A^{+} \oplus A^{-} $ as linear spaces,
    \item $A^{+}$ and $A^{-}$ are isotropic with respect to $\mathcal{B}(\cdot,\cdot)$.
\end{enumerate}
\end{defn}
\begin{defn}
Let $(A,[\cdot,\cdot])$ be a Malcev algebra. Suppose that there is a Malcev algebra structure $(A^*,[\cdot,\cdot]^*)$ on the dual space $A^*$ of $A$ and there is a Malcev algebra structure on the direct sum $A\oplus A^*$ of the underlying linear spaces $A$ and $A^*$ such that $(A,[\cdot,\cdot])$ and $(A^*,[\cdot,\cdot]^*)$ are subalgebras and the natural non-degerenate symmetric bilinear form on $A\oplus A^*$ given by
\begin{equation}\label{standard bilinear form}
    \mathcal{B}_d(x+\xi,y+\eta):=\langle x,\eta\rangle +\langle \xi,y\rangle,\quad  \forall x,y\in A,\;\xi ,\eta\in A^*,
\end{equation}
is invariant, then $(A\oplus A^*,A,A^*)$ is called a standard Manin triple of Malcev algebra associated to standard bilinear form $\mathcal{B}_d$.
\end{defn}
Obviously,    a standard Manin triple of Malcev algebras is a Manin triple of Malcev
algebras.

\begin{prop} \label{matman}
Let $(A,[\cdot,\cdot])$ be a Malcev algebra. Suppose that there is a Malcev algebra structure $(A^*,[\cdot,\cdot]^*)$ on $A^*$. Then there exists a Malcev algebra structure on the linear space $A\oplus A^*$ such that $(A\oplus A^*,A,A^*)$ is a standard Manin triple of Malcev  algebras with respect to $\mathcal{B}_d$ defined by \eqref{standard bilinear form} if and only if $(A,A^*,\mathrm{ad}_A^*,\mathrm{ad}_{A^*}^*)$  is a matched pair of Malcev algebras. Here $\mathrm{ad}_{A^*}^*$ is the coadjoint representation of the Malcev algebra $(A^*,[\cdot,\cdot]^*)$.
\end{prop}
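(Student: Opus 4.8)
The plan is to reduce the whole statement to Theorem \ref{matched} by making the choices $A_1=A$, $A_2=A^*$, $\varrho_1=\mathrm{ad}_A^*$ and $\varrho_2=\mathrm{ad}_{A^*}^*$. With these data, Theorem \ref{matched} already characterizes exactly when the bracket \eqref{MatMal} turns $A\oplus A^*$ into a Malcev algebra, namely precisely when the compatibility conditions \eqref{matched1}--\eqref{matched6} hold, i.e. when $(A,A^*,\mathrm{ad}_A^*,\mathrm{ad}_{A^*}^*)$ is a matched pair. Thus the genuine content of the proposition is to show that, for this particular pair of coadjoint representations, the assertion ``$A\oplus A^*$ is a Malcev algebra via \eqref{MatMal}'' carries the same information as ``$(A\oplus A^*,A,A^*)$ is a standard Manin triple for $\mathcal{B}_d$''. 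In other words, the Manin-triple axioms (subalgebras, isotropy, invariance of $\mathcal{B}_d$) must be matched against the coadjoint choice of $\varrho_1,\varrho_2$.

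For the direction starting from a matched pair, I would apply Theorem \ref{matched} to endow $A\oplus A^*$ with the Malcev bracket \eqref{MatMal}. Setting $x_2=y_2=0$ (respectively $x_1=y_1=0$) in \eqref{MatMal} shows that $A$ and $A^*$ are subalgebras carrying the prescribed brackets, and the very shape of $\mathcal{B}_d$ in \eqref{standard bilinear form} makes $A$ and $A^*$ isotropic, since $\mathcal{B}_d(x,y)=0$ for $x,y\in A$ and $\mathcal{B}_d(\xi,\eta)=0$ for $\xi,\eta\in A^*$. The one substantive point is the invariance $\mathcal{B}_d([X,Y],Z)=\mathcal{B}_d(X,[Y,Z])$. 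By multilinearity it suffices to test this on homogeneous arguments drawn from $A$ and $A^*$; isotropy annihilates all but the genuinely mixed evaluations, and for those the identity collapses, after inserting \eqref{MatMal}, to relations such as $\langle [x,y],\eta\rangle=\langle x,\mathrm{ad}_A^*(y)\eta\rangle$ together with its $A^*$-counterpart. These are exactly the defining relations \eqref{defeq-dualrepMA} of the coadjoint representations of Corollary \ref{cor-coadjointrepMA}, so invariance holds automatically and $(A\oplus A^*,A,A^*)$ is a standard Manin triple.

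Conversely, I would start from a Malcev structure on $A\oplus A^*$ realizing the standard Manin triple. Because $A$ and $A^*$ are subalgebras, the brackets internal to each factor are forced to be $[\cdot,\cdot]$ and $[\cdot,\cdot]^*$. For a mixed bracket $[x,\eta]$ with $x\in A$, $\eta\in A^*$, I decompose it into its $A$- and $A^*$-components and set $\varrho_2(\eta)x$ to be minus the $A$-component and $\varrho_1(x)\eta$ to be the $A^*$-component; by bilinearity and antisymmetry this recovers the normal form \eqref{MatMal}. Invariance of $\mathcal{B}_d$, evaluated on the mixed triples as above, then yields $\langle \varrho_1(y)\eta,x\rangle=\langle [x,y],\eta\rangle$ and the dual relation, which pin down $\varrho_1=\mathrm{ad}_A^*$ and $\varrho_2=\mathrm{ad}_{A^*}^*$. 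Since $A\oplus A^*$ is by hypothesis a Malcev algebra whose bracket is of the form \eqref{MatMal}, Theorem \ref{matched} now delivers the conditions \eqref{matched1}--\eqref{matched6}, so $(A,A^*,\mathrm{ad}_A^*,\mathrm{ad}_{A^*}^*)$ is a matched pair.

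The only delicate part is the bookkeeping of signs. The dual representation in \eqref{defeq-dualrepMA} carries a minus sign, and \eqref{MatMal} itself contains the antisymmetric combinations $\varrho_2(x_2)y_1-\varrho_2(y_2)x_1$ and $\varrho_1(x_1)y_2-\varrho_1(y_1)x_2$; one must verify that these signs conspire so that the invariance identity matches the coadjoint defining relation exactly rather than off by a sign. Because $\mathcal{B}_d$ is symmetric and $A,A^*$ are isotropic, the number of genuinely independent cases to test is small (essentially one per mixed pattern of arguments), so once the conventions are fixed the verification is short. This sign-tracking is where I would concentrate the care; it is the main, though not deep, obstacle.
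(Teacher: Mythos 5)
Your proposal is correct, and in fact it supplies more than the paper does: the paper states Proposition \ref{matman} without any proof (the text passes directly from the proposition to the construction of $\delta$), so there is no in-paper argument to compare against. Your route --- specialize Theorem \ref{matched} to $\varrho_1=\mathrm{ad}_A^*$, $\varrho_2=\mathrm{ad}_{A^*}^*$, observe that the subalgebra and isotropy axioms are immediate from \eqref{MatMal} and \eqref{standard bilinear form}, and then show that invariance of $\mathcal{B}_d$ on mixed triples is exactly equivalent to the mixed brackets being given by the two coadjoint actions --- is the standard argument, and it is precisely the one the paper implicitly relies on later (e.g.\ in the proof of Theorem \ref{matpair}, where Proposition \ref{matman} is invoked, and in Theorem \ref{MalcevBial}). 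Your converse is also handled properly: decomposing $[x,\eta]$ into its $A$- and $A^*$-components defines candidate maps $\varrho_1,\varrho_2$, invariance forces $\langle x,\varrho_1(y)\eta\rangle=\langle[x,y],\eta\rangle$ and its dual, identifying them with the coadjoint representations, after which Theorem \ref{matched} yields the matched-pair conditions.

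On the one point you flagged as delicate, the signs do work out: with the convention \eqref{defeq-dualrepMA}, one has $\langle x,\mathrm{ad}_A^*(y)\eta\rangle=-\langle[y,x],\eta\rangle=\langle[x,y],\eta\rangle$, so the minus sign in the dual representation cancels against the antisymmetry of the bracket, and the invariance identity holds on the nose rather than up to sign; the computation for the $A^*$-side is identical with the roles of $A$ and $A^*$ exchanged. So the verification you deferred is indeed short and succeeds.
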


For a Malcev algebras $(A^*, [\cdot,\cdot]^*),$  let $\delta: A\to A\otimes A$ be the dual map of $[\cdot,\cdot]^*:\otimes^2 A^* \to A^*$ and $\delta^*:\otimes^2 A^* \to A^*$, i.e.,
\begin{equation}
    \langle \delta(x), \xi \otimes \eta\rangle=\langle x,[\xi,\eta]^*\rangle=\langle x,\delta^*(\xi\otimes\eta)\rangle.
\end{equation}

\begin{defn}
A \textbf{Malcev coalgebra} is a pair
$(A,\delta)$, such that $A$ is a linear space and
$\delta:A\rightarrow A\otimes A$ is a linear map satisfying
\begin{align}\label{eq:antisymmetric}
\tau\delta &= -\delta,
\\ 
\label{eq:MalCo}
(\delta\otimes \delta)\delta &=(\mathrm{id}+\sigma+\sigma^{2}+\sigma^{3})(\delta\otimes \mathrm{id}\otimes\mathrm{id})(\delta \otimes\mathrm{id})\delta,
\end{align}
where $\sigma:A\otimes A\otimes A\otimes A\rightarrow A\otimes A\otimes A\otimes A$ is the linear map
defined for all
$x,y,z,t\in A$ as $$\sigma(x\otimes y\otimes z\otimes t)=y\otimes z\otimes t \otimes x.$$ 
\end{defn}

\begin{prop}
Let $(A,[\cdot,\cdot])$ be a Malcev algebra. Then $(A,\delta)$  is a Malcev coalgebra if and only if  $(A^*,\delta^*)$  is a Malcev algebra.
\end{prop}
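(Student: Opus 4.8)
The plan is to prove both implications at once by exploiting that, in finite dimensions, the canonical pairing $\langle\cdot,\cdot\rangle$ identifies $(A\otimes A)^*$ with $A^*\otimes A^*$ and $(A^{\otimes 4})^*$ with $(A^*)^{\otimes 4}$, and that linear transposition $f\mapsto f^*$ is an involution which reflects every linear relation precisely because the pairing is non-degenerate. It therefore suffices to show that each defining identity of the coalgebra $(A,\delta)$ is the transpose of the corresponding defining identity of the candidate Malcev algebra $(A^*,\delta^*)$, where $\delta^*=[\cdot,\cdot]^*$; once each coalgebra relation is matched with the transpose of an algebra relation, the biconditional is automatic in both directions.

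First I would dispose of the antisymmetry. Unwinding the defining relation $\langle\delta(x),\xi\otimes\eta\rangle=\langle x,[\xi,\eta]^*\rangle$ together with $\langle\tau\delta(x),\xi\otimes\eta\rangle=\langle\delta(x),\eta\otimes\xi\rangle$, one reads off that $\tau\delta=-\delta$ holds if and only if $[\xi,\eta]^*=-[\eta,\xi]^*$ for all $\xi,\eta\in A^*$, which is exactly the transpose of the antisymmetry of the bracket on $A^*$.

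The heart of the argument is to transpose the co-Malcev relation \eqref{eq:MalCo}, and I would do this by transposing its building blocks one at a time. By the very definition of $\delta^*$, the transpose of $\delta$ is the multiplication $m=[\cdot,\cdot]^*$, whence $(\delta\otimes\mathrm{id})^*=m\otimes\mathrm{id}$, $(\delta\otimes\mathrm{id}\otimes\mathrm{id})^*=m\otimes\mathrm{id}\otimes\mathrm{id}$ and $(\delta\otimes\delta)^*=m\otimes m$. Reversing the order of composition under transposition, the left-hand side $(\delta\otimes\delta)\delta$ transposes to $m\circ(m\otimes m)$, i.e. to $\xi\otimes\eta\otimes\zeta\otimes\omega\mapsto[[\xi,\eta]^*,[\zeta,\omega]^*]^*$, while $(\delta\otimes\mathrm{id}\otimes\mathrm{id})(\delta\otimes\mathrm{id})\delta$ transposes to the left-normed triple bracket $m(m\otimes\mathrm{id})(m\otimes\mathrm{id}\otimes\mathrm{id})$, i.e. to $\xi\otimes\eta\otimes\zeta\otimes\omega\mapsto[[[\xi,\eta]^*,\zeta]^*,\omega]^*$. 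Finally, since $\sigma$ is the $4$-cycle $x\otimes y\otimes z\otimes t\mapsto y\otimes z\otimes t\otimes x$, its transpose $\sigma^*$ is the inverse $4$-cycle and generates the same cyclic group on $(A^*)^{\otimes 4}$, so the transpose of $\mathrm{id}+\sigma+\sigma^2+\sigma^3$ is again the full cyclic symmetrizer. Assembling these pieces turns \eqref{eq:MalCo} into a cyclic sum of left-normed brackets equated to $[[\cdot,\cdot]^*,[\cdot,\cdot]^*]^*$, which after relabelling the dual arguments is Sagle's identity \eqref{maltsev2} for $(A^*,[\cdot,\cdot]^*)$.

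I expect the bookkeeping of these permutations to be the only real obstacle: one must check that the cyclic orbit produced by $\sigma^*$ coincides with the cyclic orbit appearing in \eqref{maltsev2}, and one must reconcile the adjacent pairing $[[\cdot,\cdot],[\cdot,\cdot]]$ arising from $(\delta\otimes\delta)\delta$ with the interleaved pairing $[[x,z],[y,t]]$ of Sagle's identity. This reconciliation is precisely where the antisymmetry \eqref{eq:antisymmetric} and the cyclic invariance of the summation must be invoked to bring the transposed relation into the exact shape of \eqref{maltsev2}. Once this identification is made, non-degeneracy of the pairing gives that \eqref{eq:MalCo} holds for $\delta$ if and only if \eqref{maltsev2} holds for $\delta^*$; together with the antisymmetry step, this shows that $(A,\delta)$ is a Malcev coalgebra exactly when $(A^*,\delta^*)$ is a Malcev algebra.
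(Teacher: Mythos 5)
Your duality setup is sound, and most of the transposition work is done correctly: antisymmetry dualizes exactly as you say, $(\delta\otimes\delta)\delta$ transposes to the adjacent-paired map $\xi\otimes\eta\otimes\zeta\otimes\omega\mapsto[[\xi,\eta]^*,[\zeta,\omega]^*]^*$, the right-hand side of \eqref{eq:MalCo} transposes to the full cyclic sum of left-normed brackets, and $\sigma^*$ generates the same cyclic group. The gap is the final ``reconciliation'' step, which you defer as bookkeeping: it cannot be carried out. The transpose of \eqref{eq:MalCo} is the identity
\[
[[\xi,\eta]^*,[\zeta,\omega]^*]^* \;=\; \sum_{\mathrm{cyc}}\,[[[\xi,\eta]^*,\zeta]^*,\omega]^*,
\]
whereas Sagle's identity \eqref{maltsev2} for $(A^*,[\cdot,\cdot]^*)$ reads
\[
[[\xi,\zeta]^*,[\eta,\omega]^*]^* \;=\; \sum_{\mathrm{cyc}}\,[[[\xi,\eta]^*,\zeta]^*,\omega]^*,
\]
with the same right-hand side but the left-hand side pairing arguments $1,3$ and $2,4$ instead of $1,2$ and $3,4$. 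For anticommutative products these two identities are genuinely inequivalent, and neither antisymmetry nor relabelling can convert one into the other: antisymmetry only permutes within a fixed pairing (up to sign), and relabelling the quantified variables changes both sides simultaneously. A concrete obstruction: $\mathfrak{sl}_2$ is a Lie algebra, hence satisfies \eqref{maltsev2}; if it also satisfied the adjacent-paired identity, subtracting the two would give $[[x,y],[z,t]]=[[x,z],[y,t]]$ for all arguments, which fails for $(x,y,z,t)=(e,f,h,e)$, since $[[e,f],[h,e]]=[h,2e]=4e$ while $[[e,h],[f,e]]=[-2e,-h]=-4e$.

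What this reveals is that the honest dual of Sagle's identity is not \eqref{eq:MalCo} read literally but its twist by the middle flip,
\[
(\mathrm{id}\otimes\tau\otimes\mathrm{id})(\delta\otimes\delta)\delta=(\mathrm{id}+\sigma+\sigma^{2}+\sigma^{3})(\delta\otimes \mathrm{id}\otimes\mathrm{id})(\delta \otimes\mathrm{id})\delta,
\]
because $\langle(\mathrm{id}\otimes\tau\otimes\mathrm{id})(\delta\otimes\delta)\delta(x),\,\xi\otimes\eta\otimes\zeta\otimes\omega\rangle=\langle x,[[\xi,\zeta]^*,[\eta,\omega]^*]^*\rangle$. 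A correct proof must therefore either carry this twist explicitly (treating it as the intended reading of \eqref{eq:MalCo}) or note that under the literal reading and the standard identification of $(A^{\otimes 4})^*$ with $(A^*)^{\otimes 4}$ the stated equivalence is false. Since the paper states this proposition without proof, the entire content of the statement lies in getting this dualization exactly right; your argument breaks at precisely the step you yourself flagged as ``the only real obstacle,'' and the mechanism you propose for it (antisymmetry plus cyclic invariance) provably cannot close it.
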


\begin{defn}\label{de:MalBia}
A \textbf{Malcev bialgebra} is a triple $(A,[\cdot,\cdot],\delta)$, such that:
\begin{enumerate}[label=\upshape{\arabic*)}, ref=\upshape{\arabic*}, labelindent=5pt, leftmargin=*]
\item $(A,[\cdot,\cdot])$ is a Malcev algebra,
\item $(A,\delta)$ is a Malcev coalgebra,
\item $\delta$ satisfies the followings conditions, for all $x,y,z\in A$,
\begin{align}\label{eq:MalBia}
&\delta([[x,y],z])+\delta([y,z])(\mathrm{id}\otimes \mathrm{ad}(x))+(\mathrm{ad}(y)\otimes \mathrm{id})\delta([x,z])\nonumber\\
&\quad = \big(\mathrm{ad}(\mathrm{ad}(z)(y))\otimes \mathrm{id}+\mathrm{ad}(z)\otimes \mathrm{ad}(y)+\mathrm{id}\otimes\mathrm{ad}(z)\mathrm{ad}(y)\big)\delta(x)\nonumber\\
&\quad -\big(\mathrm{ad}(z)\mathrm{ad}(x)\otimes \mathrm{id}+\mathrm{ad}(x)\otimes\mathrm{ad}(z)-\mathrm{id}\otimes\mathrm{ad}(\mathrm{ad}(x)(z))\big)\delta(y)\nonumber\\
&\quad +\big(\mathrm{ad}(x)\mathrm{ad}(y)\otimes\mathrm{id}-\mathrm{id}\otimes\mathrm{ad}(y)\mathrm{ad}(x)\big)\delta(z),
\\*[0,2cm] 
\label{eq:MalBia1}
  &(\mathrm{id}\otimes \delta)\delta([x,y])=(\mathrm{id}\otimes\mathrm{id}\otimes\mathrm{ad}(x))((\mathrm{id}\otimes \delta)\delta(y))+(\delta\otimes\mathrm{id})((\mathrm{ad}(x)\otimes\mathrm{id})\delta(y)\nonumber\\
  &\quad + (\mathrm{id}\otimes[\cdot,\cdot] \otimes\mathrm{id})((\mathrm{id}\otimes\tau \otimes\mathrm{id})(\delta(y)\otimes\delta(x))-((\delta\otimes\mathrm{id})\delta(x))(\mathrm{ad}(y)\otimes\mathrm{id}\otimes\mathrm{id})\nonumber\\
  &\quad -((\delta\otimes\mathrm{id})\delta(x))(\mathrm{id}\otimes\mathrm{id}\otimes\mathrm{ad}(y))+(\mathrm{id}\otimes\mathrm{id}\otimes[\cdot,\cdot] )((\mathrm{id}\otimes\tau \otimes\mathrm{id})(\delta(x)\otimes\delta(y))\nonumber\\
  &\quad -(\mathrm{id}\otimes\tau)((\mathrm{id}\otimes\mathrm{id}\otimes\mathrm{ad}(x))(\delta\otimes\mathrm{id})\delta(y))+(\mathrm{id}\otimes\tau)((\delta\otimes\mathrm{id})(\delta(x)(\mathrm{ad}(y)\otimes\mathrm{id}))\nonumber\\
  &\quad -(\mathrm{id}\otimes\mathrm{ad}(y)\otimes\mathrm{id})((\mathrm{id}\otimes \delta)\delta(x))+(\mathrm{id}\otimes\tau)(((\delta\otimes\mathrm{id})\delta(y))(\mathrm{ad}(x)\otimes\mathrm{id}\otimes\mathrm{id})).
\end{align}
\end{enumerate}
\end{defn}

\begin{ex}
    \begin{enumerate}
        \item Lie bialgebras are Malcev bialgebras  (See \textnormal{\cite{drinfeld}} for more details about Lie bialgebras). 
        \item  \textbf{$\mathcal{LM^*}$} bialgebras are Malcev bialgebras since $(A,[\cdot,\cdot])$ is a Lie algebra and $\delta$ satisfies \eqref{eq:MalBia} and \eqref{eq:MalBia1}.
        \item \textbf{$\mathcal{ML^*}$}  bialgebras are Malcev bialgebras since and $(A,\delta)$ is a Lie coalgebra and $\delta$ satisfies \eqref{eq:MalBia} and \eqref{eq:MalBia1}. 
            {\bf\large$$\text{Lie Bialg}\hookrightarrow\textbf{$\mathcal{LM^*}$}\ \text{Bialg}\hookrightarrow \text{Malcev Bialg} \hookleftarrow\textbf{$\mathcal{ML^*}$}\ \text{Bialg} \hookleftarrow \text{Lie Bialg.}$$}

    \end{enumerate}
\end{ex}
\begin{thm}[\hspace{-0.1pt}\cite{versh}]\label{MalcevBial}
Let $(A,[\cdot,\cdot])$ be a Malcev algebra. Suppose that there is a Malcev  algebra
structure on $A^*$ denoted by $[\cdot,\cdot]^*$ which is defined as a linear map $\delta : A\rightarrow A\otimes A$. Then
the following conditions are equivalent:
\begin{enumerate}[label=\upshape{\arabic*)}, ref=\upshape{\arabic*}, labelindent=5pt, leftmargin=*]
    \item \label{MalcevBial-1} $(A,A^*;\mathrm{ad}_A^*,\mathrm{ad}_{A^*}^*)$  is a matched pair of Malcev algebras.
     \item \label{MalcevBial-2} $(A,[\cdot,\cdot],\delta)$ is a Malcev bialgebra.
     \item \label{MalcevBial-3} $(A\oplus A^*,A,A^*)$ is a standard  Manin triple of Malcev algebras with respect to $\mathcal{B}_d$ defined by    \eqref{standard bilinear form}.
\end{enumerate}
\end{thm}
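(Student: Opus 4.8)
The plan is to prove the theorem as two equivalences, \ref{MalcevBial-1} $\Leftrightarrow$ \ref{MalcevBial-3} and \ref{MalcevBial-1} $\Leftrightarrow$ \ref{MalcevBial-2}, the first of which is essentially already available. Indeed, Proposition \ref{matman} states that a Malcev algebra structure on $A\oplus A^*$ turning $(A\oplus A^*,A,A^*)$ into a standard Manin triple with respect to the form $\mathcal{B}_d$ of \eqref{standard bilinear form} exists if and only if $(A,A^*,\mathrm{ad}_A^*,\mathrm{ad}_{A^*}^*)$ is a matched pair of Malcev algebras. So for \ref{MalcevBial-1} $\Leftrightarrow$ \ref{MalcevBial-3} I would simply invoke Proposition \ref{matman}, observing that invariance and isotropy of $\mathcal{B}_d$ force the actions of $A$ on $A^*$ and of $A^*$ on $A$ inside the Manin triple to be exactly the coadjoint representations $\mathrm{ad}_A^*$ and $\mathrm{ad}_{A^*}^*$.

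The heart of the argument is therefore \ref{MalcevBial-1} $\Leftrightarrow$ \ref{MalcevBial-2}. First I would note that the two representation requirements hidden in the matched pair $(A,A^*,\mathrm{ad}_A^*,\mathrm{ad}_{A^*}^*)$ are automatic: $(A^*,\mathrm{ad}_A^*)$ is a representation of $(A,[\cdot,\cdot])$ by Corollary \ref{cor-coadjointrepMA}, and $(A,\mathrm{ad}_{A^*}^*)$ is a representation of $(A^*,[\cdot,\cdot]^*)$ by the same corollary applied to $A^*$. Equivalently, these amount to the statement that $(A,\delta)$ is a Malcev coalgebra whenever $(A^*,[\cdot,\cdot]^*)$ is a Malcev algebra, which is part of the standing hypothesis in both \ref{MalcevBial-1} and \ref{MalcevBial-2}. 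Consequently both conditions reduce to checking the six compatibility identities \eqref{matched1}--\eqref{matched6} of Theorem \ref{matched}, specialized to $\varrho_1=\mathrm{ad}_A^*$ and $\varrho_2=\mathrm{ad}_{A^*}^*$.

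The last step is a term-by-term dualization governed by the two pairing formulas $\langle\mathrm{ad}_A^*(x)\xi,y\rangle=-\langle\xi,[x,y]\rangle$ and $\langle\mathrm{ad}_{A^*}^*(\xi)x,\eta\rangle=-\langle\delta(x),\xi\otimes\eta\rangle$, which convert every composite of coadjoint operators occurring in \eqref{matched1}--\eqref{matched6} into a tensor expression built from $\delta$, $\mathrm{ad}$, the flip $\tau$ and the cyclic map $\sigma$. I would exploit the manifest $A\leftrightarrow A^*$ symmetry: the six identities occur in dual pairs \eqref{matched1}/\eqref{matched2}, \eqref{matched3}/\eqref{matched4}, \eqref{matched5}/\eqref{matched6}, each member of a pair obtained from the other by interchanging $A$ with $A^*$ and $[\cdot,\cdot]$ with $\delta$, so that it suffices to dualize one identity from each pair. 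Pairing \eqref{matched1} against an arbitrary element of $A^*\otimes A^*$ should reproduce the cocycle-type condition \eqref{eq:MalBia}, while pairing \eqref{matched3} (and, as a cross-check, \eqref{matched5}) against $A^*\otimes A^*\otimes A^*$ should reproduce the higher identity \eqref{eq:MalBia1}; the dual members \eqref{matched2}, \eqref{matched4}, \eqref{matched6} then contribute nothing new, since the bialgebra axioms \eqref{eq:antisymmetric}, \eqref{eq:MalCo}, \eqref{eq:MalBia}, \eqref{eq:MalBia1} are invariant under the $A\leftrightarrow A^*$ interchange.

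The main obstacle is entirely computational bookkeeping. Because the Malcev (Sagle) identity is quartic rather than cubic, each of \eqref{matched1}--\eqref{matched6} carries ten or eleven terms, and every term dualizes to a sum of tensor terms whose signs are dictated by the minus sign in each coadjoint pairing and by the placement of $\tau$ and $\sigma$. The delicate points will be reversing the order of composites under transposition (a product $\varrho(a)\varrho(b)$ dualizes with the factors interchanged), correctly identifying the grouped operators on the right-hand side of \eqref{eq:MalBia}, such as $\mathrm{ad}(\mathrm{ad}(z)(y))$ and $\mathrm{ad}(z)\mathrm{ad}(y)$, with their dual counterparts, and repeatedly using anti-symmetry of $[\cdot,\cdot]$ together with $\tau\delta=-\delta$ to normalize the output into the exact shape of \eqref{eq:MalBia} and \eqref{eq:MalBia1}. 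No conceptual difficulty should arise beyond this once the pairing dictionary and the symmetry reduction are in place.
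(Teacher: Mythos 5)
Your overall architecture matches the paper's: invoke Proposition \ref{matman} for the equivalence \ref{MalcevBial-1} $\Leftrightarrow$ \ref{MalcevBial-3}, dispose of the representation requirements via Corollary \ref{cor-coadjointrepMA}, and reduce \ref{MalcevBial-1} $\Leftrightarrow$ \ref{MalcevBial-2} to dualizing the compatibility identities \eqref{matched1}--\eqref{matched6} under the coadjoint specialization $\varrho_1=\mathrm{ad}_A^*$, $\varrho_2=\mathrm{ad}_{A^*}^*$. Your two explicit dualizations (\eqref{matched1} paired against $A^*\otimes A^*$ giving \eqref{eq:MalBia}, and \eqref{matched3} paired against $A^*\otimes A^*\otimes A^*$ giving \eqref{eq:MalBia1}) are exactly the two computations the paper carries out. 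The gap is in your final symmetry step: you discard \eqref{matched2}, \eqref{matched4}, \eqref{matched6} on the grounds that ``the bialgebra axioms are invariant under the $A\leftrightarrow A^*$ interchange.'' That invariance is not a tautology; it is the self-duality of Malcev bialgebras, and it is precisely equivalent to the computations you are skipping. Worse, the axioms are not individually invariant --- they \emph{swap}: since \eqref{matched2} is the $A\leftrightarrow A^*$ mirror of \eqref{matched1}, it expresses ``\eqref{eq:MalBia} for $(A^*,[\cdot,\cdot]^*)$,'' and the paper's computation shows this dualizes to \eqref{eq:MalBia1} for $A$, not to \eqref{eq:MalBia}; dually, \eqref{matched4} and \eqref{matched6} (mirrors of \eqref{matched3} and \eqref{matched5}) dualize to \eqref{eq:MalBia}. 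The paper's grouping is \eqref{matched1},\eqref{matched4},\eqref{matched6} $\Leftrightarrow$ \eqref{eq:MalBia} and \eqref{matched2},\eqref{matched3},\eqref{matched5} $\Leftrightarrow$ \eqref{eq:MalBia1}, which encodes exactly this swap.

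Concretely, the direction that breaks in your write-up is ``bialgebra $\Rightarrow$ matched pair'': from \eqref{eq:MalBia} and \eqref{eq:MalBia1} for $A$ your three computations recover \eqref{matched1}, \eqref{matched3}, \eqref{matched5}, but to obtain \eqref{matched2}, \eqref{matched4}, \eqref{matched6} you need to know that the axioms for $A$ imply the (swapped) axioms for $A^*$, and you offer no argument for that. You cannot get it from the Manin-triple characterization \ref{MalcevBial-3} without circularity, since \ref{MalcevBial-3} is one of the statements being proven equivalent. The fix is either to perform the three remaining dualizations directly (the paper's route, asserted there ``by the same way''), or to prove as a separate lemma that transposition carries \eqref{eq:MalBia} for $(A,[\cdot,\cdot],\delta)$ to \eqref{eq:MalBia1} for $(A^*,\delta^*,[\cdot,\cdot]^t)$ and vice versa --- a computation of the same size as the ones you already planned, so the symmetry reduction buys no real savings; it only relocates the work into an unproven assertion.
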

\begin{proof}
    First we prove that \ref{MalcevBial-1} is equivalent to \ref{MalcevBial-2}. Obviously,   \eqref{eq:MalBia} is exactly   \eqref{matched1}, and furthermore \eqref{matched4}  and \eqref{matched6} and   \eqref{eq:MalBia1} is exactly  \eqref{matched2},\eqref{matched3} and \eqref{matched5}  in the case $\varrho_1=\mathrm{ad}_A^*, \varrho_2=\mathrm{ad}_{A^*}^*.$
    In fact, by   \eqref{matched1}, for any $x,y,z \in A, \quad\xi,\eta,\gamma \in A^*$, 
   \begin{align*}
   &\langle [[\mathrm{ad}_{A^*}^*(\xi)x,y],z],\eta\rangle - \langle[\mathrm{ad}_{A^*}^*(\xi)[y,z],x],\eta\rangle-\langle[\mathrm{ad}_{A^*}^*(\mathrm{ad}_A^*(x)\xi)y,z],\eta\rangle\\
   &\quad -
   \langle[[\mathrm{ad}_{A^*}^*(\xi)z,x],y],\eta\rangle+\langle\mathrm{ad}_{A^*}^*(\mathrm{ad}_A^*(y))(\mathrm{ad}_A^*(x)\xi)z,\eta\rangle+\langle[[x,z],\mathrm{ad}_{A^*}^*(\xi)y],\eta\rangle\\
   &\quad-\langle \mathrm{ad}_{A^*}^*(\xi)[[x,y],z],\eta\rangle+\langle\mathrm{ad}_{A^*}^*(\mathrm{ad}_A^*(y)\xi)[x,z],\eta\rangle+\langle[\mathrm{ad}_{A^*}^*(\mathrm{ad}_A^*(z)\xi)x,y],\eta\rangle\\
   &\quad+\langle\mathrm{ad}_{A^*}^*(\mathrm{ad}_A^*([y,z])\xi)x,\eta\rangle- \langle\mathrm{ad}_{A^*}^*(\mathrm{ad}_A^*(x))(\mathrm{ad}_A^*(z)\xi)y,\eta\rangle\\
   &=\langle\mathrm{ad}(z)\mathrm{ad}(y)(\mathrm{ad}_{A^*}^*(\xi)x),\eta\rangle+\langle\mathrm{ad}(\mathrm{ad}_{A^*}^*(\xi)[y,z]),\eta\rangle+\langle\mathrm{ad}(z)(\mathrm{ad}_{A^*}^*(\mathrm{ad}_A^*(x)\xi)y),\eta\rangle\\
   &\quad-\langle\mathrm{ad}(y)\mathrm{ad}(x)(\mathrm{ad}_{A^*}^*(\xi)z),\eta\rangle-\langle z,[\mathrm{ad}_A^*(y)\mathrm{ad}_A^*(x)\xi,\eta]^*\rangle+\langle\mathrm{ad}([x,z])\mathrm{ad}_{A^*}^*(\xi)y,\eta\rangle\\
   &\quad+\langle[[x,y],z],[\xi,\eta]^*\rangle-\langle[x,z],[\mathrm{ad}_A^*(y)\xi,\eta]^*\rangle-\langle \mathrm{ad}(y)\mathrm{ad}_{A^*}^*(\mathrm{ad}_A^*(z)\xi)x),\eta\rangle \\
   &\quad- \langle x,[\mathrm{ad}_A^*([y,z])\xi,\eta]^*\rangle+\langle y,[\mathrm{ad}_A^*(x)\mathrm{ad}_A^*(z)\xi,\eta]^*\rangle\\
   &=-\langle \delta(x),\xi\otimes \mathrm{ad}_A^*(y)\mathrm{ad}_A^*(z)\eta\rangle+\langle\delta([y,z]),\xi\otimes \mathrm{ad}_A^*(x)\eta\rangle+\langle\delta(y),\mathrm{ad}_A^*(x)\xi\otimes\mathrm{ad}_A^*(z)\eta\rangle\\
   &\quad+\langle\delta(z),\xi\otimes \mathrm{ad}_A^*(x)\mathrm{ad}_A^*(y)\eta\rangle-\langle\delta(z), \mathrm{ad}_A^*(y)\mathrm{ad}_A^*(x)\xi\otimes\eta\rangle+\langle\delta(y),\xi\otimes\mathrm{ad}_A^*([x,z])\eta\rangle\\
   &\quad+\langle\delta([[x,y],z]),\xi\otimes\eta\rangle+\langle\delta([x,z]),\mathrm{ad}_A^*(y)\xi\otimes\eta\rangle-\langle\delta(x),\mathrm{ad}_A^*(z)\xi\otimes\mathrm{ad}_A^*(y)\eta\rangle\\
   &\quad+\langle\delta(x),\mathrm{ad}_A^*([y,z])\xi\otimes\eta\rangle+\langle\delta(y),\mathrm{ad}_A^*(x)\mathrm{ad}_A^*(z)\xi\otimes\eta\rangle\\
   &=-\langle(\mathrm{id}\otimes\mathrm{ad}(z)\mathrm{ad}(y))\delta(x),\xi\otimes\eta\rangle-\langle\delta([y,z])(\mathrm{id}\otimes \mathrm{ad}(x)),\xi\otimes\eta\rangle+\langle(\mathrm{ad}(x)\otimes\mathrm{ad}(z))\delta(y),\xi\otimes\eta\rangle\\
   &\quad+\langle(\mathrm{id}\otimes\mathrm{ad}(y)\mathrm{ad}(x))\delta(z),\xi\otimes\eta\rangle-\langle(\mathrm{ad}(x)\mathrm{ad}(y))\delta(z),\xi\otimes\eta\rangle-\langle(\mathrm{id}\otimes\mathrm{ad}(\mathrm{ad}(x)z)\delta(y),\xi\otimes\eta\rangle\\
   &\quad+\langle\delta([[x,y],z]),\xi\otimes\eta\rangle-\langle(\mathrm{ad}(y)\otimes\mathrm{id})\delta([x,z]),\xi\otimes\eta\rangle-\langle(\mathrm{ad}(z)\otimes\mathrm{ad}(y))\delta(x),\xi\otimes\eta\rangle\\
   &\quad-\langle(\mathrm{ad}(\mathrm{ad}(y))z\otimes\mathrm{id})\delta(x),\xi\otimes\eta\rangle+\langle(\mathrm{ad}(z)\mathrm{ad}(x)\otimes\mathrm{id})\delta(y),\xi\otimes\eta\rangle=0.
   \end{align*}
  Also, by   \eqref{matched3}, 
   \begin{align*}
       &\langle \mathrm{ad}_{A^*}^*(\mathrm{ad}_A^*(x)[\xi,\eta]^*,\gamma\rangle-\langle \mathrm{ad}_{A^*}^*(\mathrm{ad}_A^*(x)\xi)\mathrm{ad}_{A^*}^*(\eta)y,\gamma\rangle+\langle \mathrm{ad}_{A^*}^*(\xi)[\mathrm{ad}_{A^*}^*(\eta)x,y],\gamma\rangle\\
       &\quad -\langle [\mathrm{ad}_{A^*}^*(\eta)\mathrm{ad}_{A^*}^*(\xi)y,x],\gamma\rangle-\langle [\mathrm{ad}_{A^*}^*([\xi,\eta]^*)x,y],\gamma\rangle+\langle \mathrm{ad}_{A^*}^*(\mathrm{ad}_A^*(y)\eta)\mathrm{ad}_{A^*}^*(\xi)x,\gamma\rangle\\
       &\quad-\langle \mathrm{ad}_{A^*}^*([\mathrm{ad}_A^*(y)\xi,\eta]^*)x,\gamma\rangle-\langle \mathrm{ad}_{A^*}^*(\eta)(\mathrm{ad}_{A^*}^*(\xi)[x,y],\gamma\rangle+\langle [\mathrm{ad}_{A^*}^*(\xi)x,(\mathrm{ad}_{A^*}^*(\eta)y],\gamma\rangle\\
       &\quad-\langle \mathfrak(\xi)\mathrm{ad}_{A^*}^*(\mathrm{ad}_A^*(x)\eta)y,\gamma\rangle+\langle \mathrm{ad}_{A^*}^*(\mathrm{ad}_A^*(\mathrm{ad}_{A^*}^*(\xi)y)\eta)x,\gamma\rangle\\
       &=-\langle \delta(y),\mathrm{ad}_A^*(x)[\xi,\eta]^*\otimes \gamma\rangle+\langle \delta(y),[\mathrm{ad}_A^*(x)\xi,\gamma]^*\otimes \eta\rangle-\langle \delta(x),\mathrm{ad}_A^*(y)[\xi,\gamma]^*\otimes \eta\rangle\\
       &\quad-\langle \delta(y),\xi \otimes[\eta,\mathrm{ad}_A^*(x)\gamma]^*\rangle+\langle \delta(x),[\xi,\eta]^*\otimes\mathrm{ad}_A^*(y)\gamma\rangle+\langle \delta(x),\xi\otimes[\mathrm{ad}_A^*(y)\eta,\gamma]^*\rangle\\
       &\quad+\langle \delta(x),[\mathrm{ad}_A^*(y)\xi,\eta]^*\otimes \gamma\rangle-\langle \delta([x,y]),\xi \otimes[\eta,\gamma]^*\rangle-\langle \delta(x),\xi \otimes\mathrm{ad}_A^*(\mathrm{ad}_{A^*}^*(\eta)y)\gamma\rangle\\
       &\quad+\langle \delta(y),[\xi,\gamma]^*\otimes\mathrm{ad}_A^*(x)\eta\rangle-\langle \delta(x),\mathrm{ad}_A^*(\mathrm{ad}_{A^*}^*(\xi)y)\eta\otimes\gamma\rangle\\
       &=\langle (\delta\otimes\mathrm{id})(\mathrm{ad}(x)\otimes\mathrm{id})\delta(y)+(\mathrm{id}\otimes \tau) ((\delta\otimes\mathrm{id})\delta(y))(\mathrm{ad}(x)\otimes \mathrm{id} \otimes\mathrm{id})\\
       &\quad+(\mathrm{id}\otimes \tau) ((\delta\otimes\mathrm{id})\delta(x))(\mathrm{ad}(y)\otimes\mathrm{id})+(\mathrm{id} \otimes \mathrm{id}\otimes\mathrm{ad}(x))((\mathrm{id}\otimes\delta)\delta(y))\\
       &\quad-((\delta\otimes\mathrm{id})\delta(x))(\mathrm{id}\otimes\mathrm{id}\otimes\mathrm{ad}(y))- (\mathrm{id}\otimes\mathrm{ad}(y)\otimes\mathrm{id})((\mathrm{id}\otimes\delta)\delta(x))\\
       &\quad-(\delta\otimes\mathrm{id})\delta(x)(\mathrm{ad}(y)\otimes\mathrm{id}\otimes\mathrm{id})- (\mathrm{id}\otimes\delta)\delta([x,y])\\
       &\quad+(\mathrm{id}\otimes\mathrm{id}\otimes[\cdot,\cdot])(\mathrm{id}\otimes\tau\otimes\mathrm{id})(\delta(x)\otimes\delta(y))-(\mathrm{id}\otimes\tau)(\mathrm{id}\otimes\mathrm{id} \otimes \mathrm{ad}(x))((\delta\otimes\mathrm{id})\delta(y))\\
       &\quad+(\mathrm{id}\otimes[\cdot,\cdot]\otimes\mathrm{id})(\mathrm{id}\otimes\tau\otimes\mathrm{id})(\delta(y)\otimes\delta(x)),\xi\otimes\eta\otimes\gamma\rangle =0.
   \end{align*}
By the same way, we can prove that \eqref{matched4} and \eqref{matched6} in Theorem \ref{matched} are equivalent to \eqref{eq:MalBia} in Malcev bialgebra, and \eqref{matched2} and \eqref{matched5}  are equivalent to    \eqref{eq:MalBia1}. 
\end{proof}
\section{Bialgebras, Matched pairs and  Manin triples of a Malcev-Poisson algebra}\label{section3}
In this section, we explore the bialgebra theory of Malcev-Poisson algebras through the frameworks of matched pairs and Manin triples, which are defined using a Malcev bialgebra, a commutative infinitesimal bialgebra, and a specified compatibility conditions.
\subsection{Matched pairs of Malcev-Poisson algebras}

\begin{defn}
 Let $(A_1, [\cdot,\cdot]_1, \circ_1)$ and $(A_2, [\cdot,\cdot]_2, \circ_2)$ be two Malcev-Poisson algebras. Let
$\varrho_1, \mu_1: A_1\to End(A_2)$ and $\varrho_2, \mu_2: A_2\to End(A_1)$ be four linear maps such that $(A_1, A_2, \varrho_1,\varrho_2)$ is a matched pair of  Malcev algebra and $(A_1, A_2, \mu_1,\mu_2)$ is a matched pair of commutative associative algebra.If in addition, $(A_2, \varrho_1 ,\mu_1 )$ and $(A_1,\varrho_2 ,\mu_2 )$ are representations of the Malcev-Poisson algebras
$(A_1, [\cdot,\cdot]_1, \circ_1)$ and $(A_2, [\cdot,\cdot]_2, \circ_2)$ respectively, and $\varrho_1 ,\varrho_2 ,\mu_1 ,\mu_2$ are compatible in the following
sense:
\begin{align}
&  \varrho_2(x_2)(x_1\circ_1 y_1)=(\varrho_2(x_2)x_1)\circ_1 y_1+x_1\circ_1 (\varrho_2(x_2)y_1)\nonumber\\
&\hspace{3cm} -\mu_2(\varrho_1(x_1)x_2)y_1-\mu_2(\varrho_1(y_1)x_2)x_1,\label{matpoi1}
\\*[0,2cm]
& \varrho_1(x_1)(x_2\circ_2 y_2)=(\varrho_1(x_1)x_2)\circ_2 y_2+x_2\circ_2 (\rho_1(x_1)y_2)\nonumber\\
&\hspace{3cm} -\mu_1(\varrho_2(x_2)x_1)y_2-\mu_1(\varrho_2(y_2)x_1)x_2,\label{matpoi2}
\\*[0,2cm]
&  [x_1,\mu_2(x_2)y_1]_1-\varrho_2(\mu_1(y_1)x_2)x_1=\mu_2(\varrho_1(x_1)x_2)y_1-(\varrho_2(x_2)x_1)\circ_1 y_1\nonumber\\
&\hspace{6cm} +\mu_2(x_2)[x_1,y_1]_1,\label{matpoi3}
\\*[0,2cm]
& [x_2,\mu_1(x_1)y_2]_2-\varrho_1(\mu_2(y_2)x_1)x_2=\mu_1(\varrho_2(x_2)x_1)y_2-(\varrho_1(x_1)x_2)\circ_2 y_2\nonumber\\
&\hspace{6cm} +\mu_1(x_1)[x_2,y_2]_2\label{matpoi4},
\end{align}
for any $x_1, y_1 \in A_1$ and $x_2,y_2 \in A_2$. Such a structure is called a \textbf{matched pair of Malcev-Poisson
algebras} $A_1$ and $A_2$. We denote it by     $(A_1, A_2, \varrho_1, \mu_1,\varrho_2,\mu_2)$.
\end{defn}

\begin{prop}\label{matchedpair}
   Let $(A_1, [\cdot,\cdot]_1, \circ_1)$ and $(A_2, [\cdot,\cdot]_2, \circ_2)$ be two Malcev-Poisson algebras. For linear maps $\varrho_1, \mu_1: A_1\to End(A_2)$ and $\varrho_2, \mu_2: A_2\to End(A_1)$, define two operation $"\circ"$ and $[\cdot,\cdot]$ on $A_1\oplus A_2$ by    \eqref{Matass} and   \eqref{MatMal}  respectively. Then $(A_1\oplus A_2 , [\cdot,\cdot], \circ )$ is a Malcev-Poisson algebra if and only if $(A_1, A_2, \varrho_1, \mu_1,\varrho_2,\mu_2)$ is a matched pair of Malcev-Poisson
algebras. In this case, we denote this Malcev-Poisson algebra by  $A_1\, \bowtie A_2$. Moreover, every Malcev-Poisson algebra which is a direct sum of the underlying spaces of two subalgebras
can be obtained from a matched pair of Malcev-Poisson algebras.
\end{prop}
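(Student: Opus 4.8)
The plan is to decompose the single requirement that $(A_1\oplus A_2,[\cdot,\cdot],\circ)$ be a Malcev-Poisson algebra into its three constituent axioms and treat each one separately. First, by Theorem~\ref{matched}, the bracket \eqref{MatMal} makes $A_1\oplus A_2$ a Malcev algebra if and only if $(A_1,A_2,\varrho_1,\varrho_2)$ is a matched pair of Malcev algebras; and by the matched pair characterization of commutative associative algebras recalled above (via \eqref{Matass}), the product $\circ$ makes $A_1\oplus A_2$ a commutative associative algebra if and only if $(A_1,A_2,\mu_1,\mu_2)$ is a matched pair of commutative associative algebras. Granting both, it remains only to show that the Leibniz rule \eqref{eq:malpoisalg} holds on $A_1\oplus A_2$ if and only if the representation compatibility conditions \eqref{eq:repmalcpoisalg1}-\eqref{eq:repmalcpoisalg2} (for both $(A_2,\varrho_1,\mu_1)$ and $(A_1,\varrho_2,\mu_2)$) together with \eqref{matpoi1}-\eqref{matpoi4} are satisfied.

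To analyze \eqref{eq:malpoisalg} I would exploit its multilinearity: since both sides are linear in each of $x,y,z$, it suffices to verify it on homogeneous arguments, i.e. with each of $x,y,z$ lying entirely in $A_1$ or entirely in $A_2$, giving $2^3$ cases. Because $\circ$ is commutative, the identity \eqref{eq:malpoisalg} is symmetric under the interchange $y\leftrightarrow z$, so the eight cases collapse to the choice of where $x$ sits together with the unordered pair containing $y,z$. The two pure cases (all three arguments in $A_1$, respectively all three in $A_2$) reduce to the Leibniz rule inside $A_1$, respectively $A_2$, and hold automatically since $A_1$ and $A_2$ are Malcev-Poisson algebras. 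This leaves exactly four genuinely mixed cases, which moreover come in two pairs interchanged by the symmetry $A_1\leftrightarrow A_2$, so in practice only two expansions need be carried out in full.

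In each mixed case I would substitute \eqref{MatMal} and \eqref{Matass}, expand, and project the resulting identity onto its $A_1$- and $A_2$-components, whereupon the two projections separate cleanly. For instance, the case $x,y\in A_1$, $z\in A_2$ yields \eqref{matpoi3} on the $A_1$-component and \eqref{eq:repmalcpoisalg2} for $(A_2,\varrho_1,\mu_1)$ on the $A_2$-component, while the case $x\in A_1$, $y,z\in A_2$ yields \eqref{eq:repmalcpoisalg1} for $(A_1,\varrho_2,\mu_2)$ on the $A_1$-component and \eqref{matpoi2} on the $A_2$-component; the two mirror cases obtained by the interchange $A_1\leftrightarrow A_2$ produce \eqref{matpoi1} and \eqref{matpoi4} together with the two remaining instances of \eqref{eq:repmalcpoisalg1}-\eqref{eq:repmalcpoisalg2}. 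Collecting these, the mixed instances of \eqref{eq:malpoisalg} are equivalent to the full list \eqref{eq:repmalcpoisalg1}-\eqref{eq:repmalcpoisalg2} (for both modules) and \eqref{matpoi1}-\eqref{matpoi4}, which is exactly the definition of a matched pair of Malcev-Poisson algebras, establishing the equivalence.

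For the final (``Moreover'') assertion I would argue as usual: if a Malcev-Poisson algebra $A$ decomposes as $A=A_1\oplus A_2$ with $A_1,A_2$ subalgebras, define $\varrho_i,\mu_i$ to be the actions of one factor on the other read off through the projections; the restrictions of $[\cdot,\cdot]$ and $\circ$ to $A_1\oplus A_2$ then coincide with \eqref{MatMal} and \eqref{Matass}, so the forward implication forces $(A_1,A_2,\varrho_1,\mu_1,\varrho_2,\mu_2)$ to be a matched pair recovering $A$. The main obstacle is purely computational: the careful bookkeeping in expanding the two mixed cases and checking that the component projections land precisely on \eqref{matpoi1}-\eqref{matpoi4} and \eqref{eq:repmalcpoisalg1}-\eqref{eq:repmalcpoisalg2} rather than on some rearrangement of them. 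The multilinearity reduction, the $y\leftrightarrow z$ symmetry, and the $A_1\leftrightarrow A_2$ symmetry are exactly what keep this expansion manageable.
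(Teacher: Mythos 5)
Your proposal is correct and takes essentially the same approach as the paper: both handle the Malcev and commutative associative axioms by invoking Theorem~\ref{matched} and Bai's matched-pair result for commutative associative algebras, and then expand the Leibniz rule \eqref{eq:malpoisalg} on $A_1\oplus A_2$, identifying the resulting component identities with \eqref{eq:repmalcpoisalg1}--\eqref{eq:repmalcpoisalg2} for both modules together with \eqref{matpoi1}--\eqref{matpoi4}. The only difference is bookkeeping: the paper performs one expansion with generic elements $x_1+x_2$, $y_1+y_2$, $z_1+z_2$ and reads off the conditions, whereas you reduce by multilinearity to homogeneous cases and exploit the $y\leftrightarrow z$ and $A_1\leftrightarrow A_2$ symmetries --- an equivalent but cleaner organization, and your case-by-case identifications (e.g.\ that $x,y\in A_1$, $z\in A_2$ yields exactly \eqref{matpoi3} and \eqref{eq:repmalcpoisalg2} for $(A_2,\varrho_1,\mu_1)$) check out.
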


\begin{proof}
It is known in \cite{Bai2010} that $(A_1\oplus A_2 ,  \circ )$ is a commutative associative algebra if and only if $(A_1, A_2, \mu_1,\mu_2)$
is a matched pair of commutative associative algebras. Similarly, by Theorem \ref{matched}, $(A_1\oplus A_2 , [\cdot,\cdot])$ is a Malcev algebra if and
only if $(A_1, A_2, \varrho_1, \varrho_2)$ is a matched pair of Malcev algebras.
Let $x_1,y_1,z_1 \in A_1, \quad  x_2, y_2,z_2 \in A_2,$  we consider the Leibniz rule \eqref{eq:malpoisalg} on $A_1 \oplus A_2$
\begin{align*}
  & [x_1+x_2,(y_1+y_2)\circ (z_1+z_2)]-(z_1+z_2)\circ[x_1+x_2, y_1+y_2]  - (y_1+y_2)\circ [x_1+x_2,z_1+z_2]\\
  =&[x_1+x_2,y_1 \circ_1 z_1+\mu_2(y_2)z_1+\mu_2(z_2)y_1+y_2\circ_2 z_2+\mu_1(y_1)z_2+\mu_1(z_1)x_2]\\
  &- (z_1+z_2)\circ\big([x_1,y_1]_1+\varrho_2(x_2)y_1-\varrho_2(y_2)x_1+[x_2,y_2]_2+\varrho_1(x_1)y_2-\varrho_1(y_1)x_2\big) 
  \\
  &-(y_1+y_2)\circ \big([x_1,z_1]_1+\varrho_2(x_2)z_1-\varrho_2(z_2)x_1+[x_2,z_2]_2+\varrho_1(x_1)z_2-\varrho_1(z_1)x_2\big)
  \\=&[x_1,y_1\circ_1 z_1]_1+[x_1,\mu_2(y_2)z_1]_1+[x_1,\mu(z_2)y_1]_1+\varrho_2(x_2)(y_1\circ_1 z_1)+\varrho_2(x_2)\mu_2(y_2)z_1\\&+\varrho_2(x_2)\mu_2(z_2)y_1-\varrho_2(y_2\circ_2 z_2)x_1-\varrho_2(\mu_1(y_1)z_2)x_1-\varrho_2(\mu_1(z_1)y_2)x_1+ [x_2,y_2 \circ_2 z_2]_2\\&+[x_2,\mu_1(y_1)z_2]_2+[x_2,\mu_1(z_1)y_2]_2+\varrho_1(x_1)(y_2 \circ_2 z_2)+\varrho_1(x_1)\mu_1(y_1)z_2+\varrho_1(x_1)\mu_1(z_1)y_2\\&-\varrho_1(y_1 \circ_1 z_1)x_2-\varrho_1(\mu_2(y_2)z_1)x_2-\varrho_1(\mu_2(z_2)y_1)x_2-z_1\circ_1[x_1,y_1]_1-z_1\circ_1 \varrho_2(x_2)y_1\\&+z_1\circ_1\varrho_2(y_2)x_1-\mu_2(z_2)[x_1,y_1]_1-\mu_2(z_2)\varrho_2(x_2)y_1+\mu_2(z_2)\varrho_2(y_2)x_1-\mu_2([x_2,y_2]_2)z_1\\&-\mu_2(\varrho_1(x_1)y_2)z_1+\mu_2(\varrho_1(y_1)x_2)z_1-z_2\circ_2 [x_2,y_2]_2-z_2\circ_2\varrho_1(x_1)y_2+z_2\circ_2\varrho_1(y_1)x_2\\&-\mu_1(z_1)[x_2,y_2]_2-\mu_1(z_1)\varrho_1(x_1)y_2+\mu_1(z_1)\varrho_1(y_1)x_2-\mu_1([x_1,y_1]_1)z_2-\mu_1(\varrho_2(x_2)y_1)z_2\\&+\mu_1(\varrho_2(y_2)x_1)z_2-y_1\circ_1[x_1,z_1]_1-y_1\circ_1 \varrho_2(x_2)z_1+y_1\circ_1\varrho_2(z_2)x_1-\mu_2(y_2)[x_1,z_1]_1\\&-\mu_2(y_2)\varrho_2(x_2)z_1+\mu_2(y_2)\varrho_2(z_2)x_1-\mu_2([x_2,z_2]_2)y_1-\mu_2(\varrho_1(x_1)z_2)y_1+\mu_2(\varrho_1(z_1)x_2)y_1\\&-y_2\circ_2 [x_2,z_2]_2-y_2\circ_2\varrho_1(x_1)z_2+y_2\circ_2\varrho_1(z_1)x_2-\mu_1(y_1)[x_2,z_2]_2-\mu_1(y_1)\varrho_1(x_1)z_2\\&+\mu_1(y_1)\varrho_1(z_1)x_2-\mu_1([x_1,z_1]_1)y_2-\mu_1(\varrho_2(x_2)z_1)y_2+\mu_1(\varrho_2(z_2)x_1)y_2.
\end{align*}
Then   \eqref{eq:malpoisalg}  hold for $(A_1,[\cdot,\cdot]_1,\circ_1)$  and $(A_2,[\cdot,\cdot]_2, \circ_2)$ as Malcev-Poisson algebras, $(A_2, \varrho_1 ,\mu_1 )$ and $(A_1,\varrho_2 ,\mu_2 )$ as representations of the Malcev-Poisson algebras
$(A_1, [\cdot,\cdot]_1, \circ_1)$ and $(A_2, [\cdot,\cdot]_2, \circ_2)$ respectively and the following equations holds:
\begin{align*}
&  \varrho_2(x_2)(y_1\circ_1 z_1)=(\varrho_2(x_2)y_1)\circ_1 z_1+y_1\circ_1 (\varrho_2(x_2)z_1)\\
&\hspace{3cm} -\mu_2(\varrho_1(y_1)x_2)z_1-\mu_2(\varrho_1(z_1)x_2)y_1,
\\*[0,2cm]
& \varrho_1(x_1)(y_2\circ_2 z_2)=(\varrho_1(x_1)y_2)\circ_2 z_2+y_2\circ_2 (\rho_1(x_1)z_2)\\
&\hspace{3cm} -\mu_1(\varrho_2(y_2)x_1)z_2-\mu_1(\varrho_2(z_2)x_1)y_2,
\\*[0,2cm]
&  [x_1,\mu_2(y_2)z_1]_1-\varrho_2(\mu_1(z_1)y_2)x_1=\mu_2(\varrho_1(x_1)y_2)z_1-(\varrho_2(y_2)x_1)\circ_1 z_1\\
&\hspace{6cm} +\mu_2(y_2)[x_1,z_1]_1,
\\*[0,2cm]
& [x_2,\mu_1(y_1)z_2]_2-\varrho_1(\mu_2(z_2)y_1)x_2=\mu_1(\varrho_2(x_2)y_1)z_2-(\varrho_1(y_1)x_2)\circ_2 z_2\\
&\hspace{6cm} +\mu_1(y_1)[x_2,z_2]_2,
\end{align*}
Hence the conclusion holds.
\end{proof}

\subsection{Manin triples of Malcev-Poisson algebras.}
\begin{defn}
     A Manin triple of Malcev-Poisson algebras $(A, A^{+} , A^{-})$ is a triple of Malcev-Poisson algebras $A$, $A^{+}$ and $A^{-} $ together with a nondegenerate symmetric bilinear form $\mathcal{B}(\cdot,\cdot)$ on $A$ which is invariant
in the sense that
\begin{equation}\label{bilinearform1}
     \omega(x\circ y,z)=\omega(x,y\circ z),
\end{equation}
\begin{equation}\label{bilinearform2}
    \omega([x,y],z)=\omega(x,[y,z]),
\end{equation}
satisfying the following conditions:
\begin{enumerate}[label=\upshape{\arabic*)}, ref=\upshape{\arabic*}, labelindent=5pt, leftmargin=*]
    \item $A^{+}$  and $A^{-}$  are Malcev-Poisson subalgebras of $A$,
    \item $A = A^{+} \oplus A^{-} $ as linear spaces,
    \item $A^{+}$ and $A^{-}$ are isotropic with respect to $\omega(\cdot,\cdot)$.
\end{enumerate}
\end{defn}
A homomorphism between two Manin triples of  Malcev-Poisson algebras  $(A_1,A^+_1,A^-_1)$ and $(A_2,A^+_2,A^-_2)$ associated to two nondegenerate symmetric invariant bilinear forms $\omega_1$ and $\omega_2$ respectively, is a homomorphism of Malcev-Poisson algebras $\phi:A_1 \rightarrow A_2$ such that 
\begin{align*}
      \phi(A^+_1)\subseteq A^+_2,\quad \phi(A^-_1)\subseteq A^-_2,\quad \omega_1(x,y)=\omega_2(\phi(x),\phi(y)),\ \forall x,y \in A.
\end{align*}
If in addition, $\phi$ is an isomorphism of linear spaces, then the two Manin triples are called isomorphic.

Obviously, a Manin triple of Malcev-Poisson algebras is just a triple of Malcev-Poisson algebras such that they
are both a Manin triple of Malcev algebras and a commutative associative version of Manin triple
with the same nondegenerate symmetric bilinear form (and share the same isotropic subalgebras).
Moreover, it is easy to see that $A^+$  and $A^-$ are Lagrangian subalgebras of $A$.

In particular, there is a special (standard) Manin triple of Malcev-Poisson algebras as follows. Let
$(A, [\cdot,\cdot], \circ)$ be a Malcev-Poisson algebra. If there is a Malcev-Poisson algebra structure on the direct sum of the
underlying linear space of $A$ and its dual space $A^*$ such that $(A \oplus A^*, A, A^*)$ is a Manin triple of
Malcev-Poisson algebras with the invariant symmetric bilinear form on $A \oplus A^*$ given by:
\begin{equation}\label{stanMalPoi}
\omega_d(x+\xi,y+\eta)=\langle x,\eta\rangle + \langle \xi,y\rangle , \;\; \forall  x,y \in A,\,\,\xi,\eta \in A^*.
\end{equation}
Then $(A \oplus A^*, A , A^*)$  is called a standard Manin triple of Malcev-Poisson algebras.

Obviously, a standard Manin triple of Malcev-Poisson algebras is a Manin triple of Malcev-Poisson algebras.
Conversely, it is easy to show that every Manin triple of Malcev-Poisson algebras is isomorphic to a standard
one. Furthermore, it is straightforward to get the following structure theorem.

\begin{thm}\label{matpair}
   Let $(A, [\cdot,\cdot]_1, \circ_1)$ and $(A^*, [\cdot,\cdot]_2, \circ_2)$ be two Malcev-Poisson algebras. Then $(A \oplus A^*, A , A^*)$  is a standard Manin triple of Malcev-Poisson algebras if and only if\\  $(A, A^*, \mathrm{ad}_A^*, -L^*,\mathrm{ad}_{A^*}^*,\mathcal{-L}_{A^*}^*)$ is a matched pair of Malcev-Poisson algebras.
\end{thm}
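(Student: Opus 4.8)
The plan is to prove the equivalence by decomposing the standard Manin triple structure on $A \oplus A^*$ into its Malcev and its commutative associative components, and then invoking the corresponding structure theorems that have already been established. The central observation is that a Manin triple of Malcev-Poisson algebras, by the remark preceding the statement, is \emph{simultaneously} a Manin triple of Malcev algebras and a Manin triple (commutative associative version) of commutative associative algebras, with respect to the \emph{same} bilinear form $\omega_d$ defined in \eqref{stanMalPoi}. Since $\omega_d$ coincides with the standard form $\mathcal{B}_d$ of \eqref{standard bilinear form}, I can handle the two pieces separately.

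First I would apply Proposition \ref{matman} (the Malcev part): the existence of a Malcev algebra structure on $A \oplus A^*$ making $(A \oplus A^*, A, A^*)$ a standard Manin triple of Malcev algebras with respect to $\mathcal{B}_d$ is equivalent to $(A, A^*, \mathrm{ad}_A^*, \mathrm{ad}_{A^*}^*)$ being a matched pair of Malcev algebras. Next I would invoke the analogous commutative associative result from \cite{Bai2010} (the version underlying the matched pair of commutative associative algebras together with the proposition realizing \eqref{Matass}): the commutative associative Manin triple condition is equivalent to $(A, A^*, -L^*, -L_{A^*}^*)$ being a matched pair of commutative associative algebras, where $-L^*$ and $-L_{A^*}^*$ are the dual regular representations (recalling that $(A^*, -L^*)$ is a representation of $(A, \circ)$). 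Thus both bracket-level and product-level matched pair axioms are accounted for, and the representations on the dual spaces are exactly $\mathrm{ad}_A^*, -L^*$ (acting of $A$ on $A^*$) and $\mathrm{ad}_{A^*}^*, -L_{A^*}^*$ (acting of $A^*$ on $A$).

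The remaining and genuinely new content is the interaction between the Malcev bracket and the commutative associative product, namely the four compatibility conditions \eqref{matpoi1}--\eqref{matpoi4} in the definition of a matched pair of Malcev-Poisson algebras. By Proposition \ref{matchedpair}, the bracket $[\cdot,\cdot]$ and product $\circ$ defined by \eqref{MatMal} and \eqref{Matass} on $A \oplus A^*$ assemble into a genuine Malcev-Poisson algebra precisely when, in addition to the two separate matched pair conditions, the Leibniz rule \eqref{eq:malpoisalg} holds on $A \oplus A^*$; and the computation in that proof shows the Leibniz rule reduces exactly to \eqref{matpoi1}--\eqref{matpoi4}. Therefore, after specializing $\varrho_1 = \mathrm{ad}_A^*$, $\mu_1 = -L^*$, $\varrho_2 = \mathrm{ad}_{A^*}^*$, $\mu_2 = -L_{A^*}^*$, the Malcev-Poisson structure on $A \oplus A^*$ with $A$ and $A^*$ as isotropic subalgebras is equivalent to $(A, A^*, \mathrm{ad}_A^*, -L^*, \mathrm{ad}_{A^*}^*, -L_{A^*}^*)$ being a matched pair of Malcev-Poisson algebras.

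The main obstacle I anticipate is \emph{verifying that the isotropy and invariance of $\omega_d$ force precisely these coadjoint-type representations}, and conversely that a matched pair structure reproduces an invariant $\omega_d$. Concretely, one must check that invariance of $\omega_d$ under the product $\circ$ in the sense of \eqref{bilinearform1} pins down the action of $A$ on $A^*$ as $-L^*$ and the action of $A^*$ on $A$ as $-L_{A^*}^*$ (and symmetrically for the bracket via \eqref{bilinearform2}, yielding the coadjoint actions), so that the bilinear form condition is not an extra hypothesis but is automatically encoded by the dual representations. This is the standard ``translate invariance into the dual representation'' step, routine in spirit but requiring care to confirm that the \emph{same} decomposition $A \oplus A^*$ serves both operations and that no sign or duality convention is mismatched between the Malcev side (Proposition \ref{matman}) and the commutative associative side. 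Once this dictionary is in place, the equivalence follows by combining Proposition \ref{matman}, the commutative associative analogue, and Proposition \ref{matchedpair}.
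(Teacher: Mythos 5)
Your proposal is correct and follows essentially the same route as the paper: both decompose the standard Manin triple into its Malcev part (handled by Proposition \ref{matman}), its commutative associative part (handled by the cited result of \cite{Bai2010} giving the matched pair $(A,A^*,-L^*,-L^*_{A^*})$), and then invoke Proposition \ref{matchedpair} with the specialization $\varrho_1=\mathrm{ad}_A^*$, $\mu_1=-L^*$, $\varrho_2=\mathrm{ad}_{A^*}^*$, $\mu_2=-L^*_{A^*}$ to handle the cross-compatibility conditions \eqref{matpoi1}--\eqref{matpoi4} and both directions of the equivalence. The "dictionary" step you flag as the main obstacle (invariance of $\omega_d$ pinning down the dual representations) is exactly the content already packaged inside the two cited structure results, so the paper, like you, does not re-derive it.
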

\begin{proof}
It is known in \cite{Bai2010}, there is a commutative associative algebra structure on $A\oplus A^*$
such
that both $(A, \circ_1)$ and $(A^*
, \circ_2 )$ are commutative associative subalgebras and $\omega$ satisfies   \eqref{bilinearform1}
if and only if $(A, A^*
, -L^*,\mathcal{-L}_{A^*}^*)$ is a matched pair of commutative associative algebras. Similarly by Proposition \ref{matman}, $(A\oplus A^*,A,A^*)$ is a standard Manin triple of Malcev  algebras with respect to $\mathcal{B}_d$ defined by \eqref{standard bilinear form} if and only if $(A,A^*,\mathrm{ad}_A^*,\mathrm{ad}_{A^*}^*)$ of Malcev algebras. Hence if $(A \oplus A^*, A , A^*)$  is a standard Manin triple of Malcev-Poisson algebras, then by Proposition \ref{matchedpair} with 
\begin{equation*}
    A_1=A,\quad A_2=A^*, \quad\mu_1=-L^*,\quad \rho_1=\mathrm{ad}_A^*, \quad\mu_2=\mathcal{-L}_{A^*}^*,\quad \rho_2=\mathrm{ad}_{A^*}^*,
\end{equation*}
we get $(A, A^*, \mathrm{ad}_A^*, -L^*,\mathrm{ad}_{A^*}^*,\mathcal{-L}_{A^*}^*)$ is a matched pair of Malcev-Poisson algebras. Conversely, if $(A, A^*, \mathrm{ad}_A^*, -L^*,\mathrm{ad}_{A^*}^*,\mathcal{-L}_{A^*}^*)$ is a matched pair of Malcev-Poisson algebras, then by Propostion \ref{matchedpair} again, there is a Malcev-Poisson algebra $A\oplus A^*$  obtained from the
matched pair with both $A$ and $A^*$ as Malcev-Poisson subalgebras.  Moreover, the bilinear
form $\omega_d$ is invariant on $A\oplus A^*$. Hence $(A \oplus A^*, A , A^*)$  is a standard Manin triple of Malcev-Poisson algebras.
\end{proof}

\subsection{Malcev-Poisson bialgebras.}
\begin{defn}
A \textbf{cocommutative coassociative coalgebra} is a pair
$(A,\Delta)$, such that $A$ is a linear space and
$\Delta:A\rightarrow A\otimes A$ is a linear map satisfying
\begin{align}\label{eq:symmetric}
\tau\Delta &= \Delta,
\\
\label{AssoCo}
(\mathrm{id}\otimes \Delta)\Delta &= (\Delta\otimes\mathrm{id})\Delta,
\end{align}
where $\tau: A\otimes A\rightarrow A\otimes A$ is the exchanging operator defined as $\tau(x\otimes
y)=y\otimes x,$  for all $x,y\in A$.
\end{defn}

\begin{defn}
A \textbf{commutative and cocommutative infinitesimal bialgebra}
is a triple $(A,\circ,\Delta)$ such that: 
\begin{enumerate}[label=\upshape{\arabic*)}, ref=\upshape{\arabic*}, labelindent=5pt, leftmargin=*]
\item $(A,\circ)$ is a commutative associative algebra,
\item $(A,\Delta)$ is a cocommutative coassociative coalgebra,
\item $\Delta$ satisfies the following condition:
\begin{equation}\label{AssoBia}
\Delta(x\circ y)=( L(x)\otimes \mathrm{id})\Delta(y)+(\mathrm{id}\otimes L(y))\Delta(x),\;\;\forall x,y\in A.
\end{equation}
\end{enumerate}
\end{defn}

Now, we give the definitions of a Malcev-Poisson coalgebra and a Malcev-Poisson bialgebra.

\begin{defn}
Let $A$ be a linear space  and $\Delta,\delta:A\rightarrow A\otimes A.$ Then $(A,\Delta,\delta)$ is called a \textbf{Malcev-Poisson coalgebra} if $(A,\Delta)$ is a cocommutative coassociative coalgebra, $(A,\delta)$ is a Malcev coalgebra, and the following conditions are satisfied
\begin{equation}\label{eq:Co1}
(\mathrm{id}\otimes\Delta)\delta(x)-(\delta\otimes
\mathrm{id})\Delta(x)-(\tau\otimes
\mathrm{id})(\mathrm{id}\otimes\delta)\Delta(x)=0, \;\;\forall x\in A.
\end{equation}
\end{defn}
\begin{prop}
Under the finite-dimensional assumption, $(A,\Delta,\delta)$ is a Malcev-Poisson coalgebra if and only  if  $(A^*,\Delta^*,\delta^*)$ is a Malcev-Poisson algebra.
\end{prop}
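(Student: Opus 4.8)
The plan is to prove the proposition by dualization, reducing each clause of the Malcev-Poisson coalgebra axioms on $A$ to the corresponding clause of the Malcev-Poisson algebra axioms on $A^*$, so that the only computation of substance is the passage between the cross-compatibility \eqref{eq:Co1} and the Leibniz rule \eqref{eq:malpoisalg}. First I would set $[\xi,\eta]:=\delta^*(\xi\otimes\eta)$ and $\xi\circ\eta:=\Delta^*(\xi\otimes\eta)$ for $\xi,\eta\in A^*$, so that by definition of the dual maps one has $\langle\delta(x),\xi\otimes\eta\rangle=\langle x,[\xi,\eta]\rangle$ and $\langle\Delta(x),\xi\otimes\eta\rangle=\langle x,\xi\circ\eta\rangle$ for all $x\in A$. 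By the duality between Malcev coalgebras and Malcev algebras established above, $(A,\delta)$ is a Malcev coalgebra if and only if $(A^*,\delta^*)=(A^*,[\cdot,\cdot])$ is a Malcev algebra; likewise the self-dual form of the defining identities of a cocommutative coassociative coalgebra shows that $(A,\Delta)$ is cocommutative coassociative if and only if $(A^*,\Delta^*)=(A^*,\circ)$ is commutative associative, since $\tau\Delta=\Delta$ dualizes to commutativity and $(\mathrm{id}\otimes\Delta)\Delta=(\Delta\otimes\mathrm{id})\Delta$ dualizes to associativity. Consequently every clause of the two definitions is matched except \eqref{eq:Co1} on one side and the Leibniz rule on the other, and it suffices to show these two are equivalent under the pairing.

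Next I would pair \eqref{eq:Co1}, an identity in $A^{\otimes 3}$ depending on $x\in A$, against an arbitrary decomposable tensor $\xi\otimes\eta\otimes\gamma\in(A^*)^{\otimes 3}$, and evaluate the three resulting scalars using the coproduct notation $\delta(x)=\sum x_{(1)}\otimes x_{(2)}$, $\Delta(x)=\sum x^{(1)}\otimes x^{(2)}$ together with the defining relations above. The first term $(\mathrm{id}\otimes\Delta)\delta(x)$ contracts to $\langle x,[\xi,\eta\circ\gamma]\rangle$; the second term $(\delta\otimes\mathrm{id})\Delta(x)$ contracts to $\langle x,[\xi,\eta]\circ\gamma\rangle$; and the third term $(\tau\otimes\mathrm{id})(\mathrm{id}\otimes\delta)\Delta(x)$, after applying the flip $\tau\otimes\mathrm{id}$ and then re-summing the two legs that are paired with $\xi$ and $\gamma$, contracts to $\langle x,\eta\circ[\xi,\gamma]\rangle$. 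Since \eqref{eq:Co1} holds for all $x$ precisely when the pairing vanishes for all $x,\xi,\eta,\gamma$, this reads exactly $[\xi,\eta\circ\gamma]=[\xi,\eta]\circ\gamma+\eta\circ[\xi,\gamma]$, which is the Leibniz rule \eqref{eq:malpoisalg} on $A^*$. Reading the same chain of equalities in reverse recovers \eqref{eq:Co1} from the Leibniz rule, giving the two-way implication.

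I expect the only delicate point to be the bookkeeping of the tensor legs in the third term: one must expand $(\tau\otimes\mathrm{id})(\mathrm{id}\otimes\delta)\Delta$ carefully so that the single leg carrying $\eta$ and the two legs carrying $\xi,\gamma$ land in the correct slots, and then regroup the latter two into $[\xi,\gamma]=\delta^*(\xi\otimes\gamma)$ before pairing the remaining factor with $\eta$ through $\Delta^*$; a misplaced flip here would produce a spurious variant of the Leibniz identity. Aligning the three evaluations with the three summands of \eqref{eq:malpoisalg} in the same order then completes the argument. Finite-dimensionality is used throughout, as it guarantees $(A^*)^{\otimes 3}\cong(A^{\otimes 3})^*$, so that the vanishing of \eqref{eq:Co1} is faithfully detected by all such pairings; combining this equivalence with the two structural dualities of the first step yields the proposition.
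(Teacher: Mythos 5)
Your proposal is correct and follows essentially the same route as the paper's own proof: reduce the coalgebra/algebra clauses to the already-established dualities, then pair \eqref{eq:Co1} against $\xi\otimes\eta\otimes\gamma$ and check that the three terms contract to $\langle x,[\xi,\eta\circ\gamma]\rangle$, $\langle x,[\xi,\eta]\circ\gamma\rangle$ and $\langle x,\eta\circ[\xi,\gamma]\rangle$, recovering the Leibniz rule \eqref{eq:malpoisalg} on $A^*$. Your handling of the flip in the third term matches the paper's computation exactly, so there is nothing to correct.
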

\begin{proof}
Obviously, $(A,\Delta)$ is a cocommutative coassociative coalgebra if and only if $(A^*,\Delta^*)$ is a
commutative associative algebra and $(A,\delta)$ is a Malcev coalgebra if and only if $(A^*, \delta^*)$ is a Malcev  algebra. For all $ \xi ,\eta 
 \in A^*,$ set $\Delta^*(\xi \otimes \eta)=\xi \circ \eta,\quad \delta^*(\xi\otimes \eta)=[\xi,\eta]$. Then,
\begin{align*}
    &\langle(\mathrm{id}\otimes\Delta)\delta(x), \xi \otimes\eta \otimes \gamma\rangle=\langle x ,\delta^*(\mathrm{id}\otimes\Delta^*)(\xi \otimes\eta \otimes \gamma)\rangle=\langle x, [\xi,\eta\circ \gamma]\rangle,\\&
\langle(\delta\otimes
\mathrm{id})\Delta(x),\xi \otimes\eta \otimes \gamma\rangle=\langle x ,\Delta^*(\delta^* \otimes\mathrm{id})(\xi \otimes\eta \otimes \gamma)\rangle=\langle x, [\xi,\eta]\circ \gamma\rangle,\\&
\langle(\tau\otimes
\mathrm{id})(\mathrm{id}\otimes\delta)\Delta(x), \xi \otimes\eta \otimes \gamma\rangle=\langle x ,\Delta^*(\mathrm{id}\otimes\delta^*)(\tau\otimes
\mathrm{id})(\xi \otimes\eta \otimes \gamma)\rangle=\langle x, \eta\circ[\xi, \gamma]\rangle.
\end{align*} 
for all $x \in A \quad \text{and} \quad\xi, \eta, \gamma \in A^*.$ Thus  \eqref {eq:malpoisalg}  hold for $(A^*, \delta^*, \Delta^*)$  as a Malcev-Poisson algebra if and only if \eqref{eq:Co1} hold.
\end{proof}
\begin{defn}\label{de:MPBA}
A \textbf{Malcev-Poisson bialgebra} is a collection $(A,[\cdot,\cdot],\circ,\Delta,\delta)$ satisfying the following conditions: 
\begin{enumerate}[label=\upshape{\arabic*)}, ref=\upshape{\arabic*}, labelindent=5pt, leftmargin=*]
\item $(A,[\cdot,\cdot],\circ)$ is a Malcev-Poisson algebra,
\item $(A,\Delta,\delta)$ is a Malcev-Poisson coalgebra,
\item $\Delta$ satisfies   \eqref{AssoBia} and hence $(A,\circ,\Delta)$ is a commutative and commutative infinitesimal bialgebra,
\item  $\delta$ satisfies   \eqref{eq:MalBia},  \eqref{eq:MalBia1}  and hence  $(A,[\cdot,\cdot],\delta)$ is a Malcev bialgebra;
\item The following equations hold, for all $x,y\in A$,
\begin{eqnarray}
&&
\delta(x\circ y)+(\mathrm{ad}(y)\otimes\mathrm{id})\Delta(x)-(\mathrm{id}\otimes L(x))\delta(y)\nonumber\\
&&+(\mathrm{ad}(x)\otimes\mathrm{id})\Delta(y)-(\mathrm{id}\otimes L(y))\delta(x)=0,
\label{eq:Bi4}\\
&&\Delta([x,y])-(L(y)\otimes \mathrm{id})\delta(x)-(\mathrm{id}\otimes\mathrm{ad}(x))\Delta(y), \nonumber\\
&&+(\mathrm{id}\otimes L(y))\delta(x)-(\mathrm{ad}(x)\otimes \mathrm{id})\Delta(y)=0.
\label{eq:Bi5}
\end{eqnarray}
\end{enumerate}
\end{defn}
\begin{ex}
\begin{enumerate}
    \item  Any Poisson bialgebra is a Malcev-Poisson bialgebra.
    \item A Malcev bialgebra $(A,[\cdot,\cdot],\delta)$ is a Malcev-Poisson bialgebra $(A,[\cdot,\cdot],\circ,\Delta,\delta)$ in which the commutative associative algebra $(A,\circ)$ is trivial, that is $$x\circ y=0,\ \forall x,y \in A.$$
\end{enumerate}   
\end{ex}

\begin{thm}
   Let $(A,[\cdot,\cdot],\circ)$ be a Malcev-Poisson algebra. Suppose that there is a Malcev-Poisson  algebra
structure on $A^*$ denoted by $\Delta^*\quad\text{and}\quad \delta^*$ which are defined as a linear maps $\Delta : A\rightarrow A\otimes A\quad \text{and}\quad \delta : A\rightarrow A\otimes A  $. Then
the following conditions are equivalent:
\begin{enumerate}[label=\upshape{\arabic*)}, ref=\upshape{\arabic*}, labelindent=5pt, leftmargin=*]
     \item $(A,[\cdot,\cdot],\circ, \Delta,\delta)$ is a Malcev-Poisson bialgebra,
    \item $(A, A^*, \mathrm{ad}_A^*, -L^*,\mathrm{ad}_{A^*}^*,\mathcal{-L}_{A^*}^*)$  is a matched pair of Malcev-Poisson algebras,    
     \item $(A\oplus A^*,A,A^*)$ is a standard  Manin triple of Malcev-Poisson algebras with respect to $\omega_d$ defined by    \eqref{stanMalPoi}.
\end{enumerate} 
\end{thm}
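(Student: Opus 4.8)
The plan is to run the same three-way scheme used for the purely Malcev case in Theorem~\ref{MalcevBial}, reducing the Malcev-Poisson statement to its Malcev and commutative-associative constituents together with the two cross-compatibilities \eqref{eq:Bi4}--\eqref{eq:Bi5}. The equivalence of (2) and (3) requires no new work: it is precisely the content of Theorem~\ref{matpair}, which already asserts that $(A\oplus A^*,A,A^*)$ is a standard Manin triple of Malcev-Poisson algebras if and only if $(A,A^*,\mathrm{ad}_A^*,-L^*,\mathrm{ad}_{A^*}^*,\mathcal{-L}_{A^*}^*)$ is a matched pair. So I would dispose of (2)$\Leftrightarrow$(3) with a one-line citation and concentrate everything on (1)$\Leftrightarrow$(2).

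For (1)$\Leftrightarrow$(2) I would unwind the definition of a matched pair of Malcev-Poisson algebras into its four constituent demands and match each against a clause of Definition~\ref{de:MPBA}. First, $(A,A^*,\mathrm{ad}_A^*,\mathrm{ad}_{A^*}^*)$ being a matched pair of Malcev algebras is, by Vershinin's Theorem~\ref{MalcevBial}, exactly the assertion that $\delta$ satisfies \eqref{eq:MalBia} and \eqref{eq:MalBia1}, i.e.\ the fourth condition of Definition~\ref{de:MPBA}. Second, $(A,A^*,-L^*,\mathcal{-L}_{A^*}^*)$ being a matched pair of commutative associative algebras is, by the commutative-associative analogue of that correspondence (the infinitesimal-bialgebra/matched-pair equivalence, cf.~\cite{Bai2010}), exactly the assertion that $\Delta$ satisfies \eqref{AssoBia}, i.e.\ the third condition. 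Third, the requirement that $(A^*,\mathrm{ad}_A^*,-L^*)$ and $(A,\mathrm{ad}_{A^*}^*,\mathcal{-L}_{A^*}^*)$ be representations of the two Malcev-Poisson algebras is automatic, since these are the dual representations of the regular/adjoint representations $(A,\mathrm{ad},L)$ and its $A^*$-analogue, and the latter are Malcev-Poisson representations precisely because $A$ and $A^*$ obey the Leibniz rule \eqref{eq:malpoisalg}. Fourth, the remaining compatibility conditions \eqref{matpoi1}--\eqref{matpoi4} must be shown equivalent to the tensor identities \eqref{eq:Bi4} and \eqref{eq:Bi5}.

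The substance lies in this last step, and I would carry it out by the pairing-and-dualising technique already exhibited in the proof of Theorem~\ref{MalcevBial}. Substituting $\varrho_1=\mathrm{ad}_A^*$, $\mu_1=-L^*$, $\varrho_2=\mathrm{ad}_{A^*}^*$, $\mu_2=\mathcal{-L}_{A^*}^*$ into \eqref{matpoi1} (an identity in $A$ with $x_2=\xi\in A^*$), I would pair both sides against an arbitrary $\eta\in A^*$, push each coadjoint/co-regular operator across the pairing using $\langle\mathrm{ad}_A^*(x)\xi,y\rangle=-\langle\xi,[x,y]\rangle$ and $\langle L^*(x)\xi,y\rangle=-\langle\xi,x\circ y\rangle$, and re-express the resulting scalars through $\langle\delta(-),-\otimes-\rangle$ and $\langle\Delta(-),-\otimes-\rangle$. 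Collecting terms should reproduce the pairing of the left-hand side of \eqref{eq:Bi4} with $\xi\otimes\eta$; since $\xi,\eta$ are arbitrary this yields \eqref{eq:Bi4}, and the reverse reading gives the converse. Because $A_2=A^*$ is the linear dual of $A_1=A$, the dual condition \eqref{matpoi2} carries the same information after transposition and so imposes nothing new. In the same way \eqref{matpoi3} dualises to \eqref{eq:Bi5}, with \eqref{matpoi4} its transpose, giving the correspondences \eqref{matpoi1}$\,\leftrightarrow\,$\eqref{eq:Bi4} and \eqref{matpoi3}$\,\leftrightarrow\,$\eqref{eq:Bi5}.

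The main obstacle is bookkeeping rather than concept: this final step is a long but mechanical tensor computation in which one must track scrupulously the signs introduced by $-L^*$ and $\mathcal{-L}_{A^*}^*$ and the placement of $\tau$ when transporting operators through the pairing. I expect the chief risk to be sign slips and mislabelled tensor legs (which leg of $\delta$ versus $\Delta$ a given $\mathrm{ad}$ or $L$ acts on), exactly as in the already-verified Malcev computation of Theorem~\ref{MalcevBial}. Once the correspondence is pinned down for one fixed arrangement of arguments, the remaining identities follow from the $A\leftrightarrow A^*$ duality and require no independent effort, completing the cycle (1)$\Leftrightarrow$(2)$\Leftrightarrow$(3).
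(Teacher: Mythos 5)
Your architecture coincides with the paper's own proof: you quote Theorem \ref{matpair} for (2)$\Leftrightarrow$(3), and for (1)$\Leftrightarrow$(2) you split the matched-pair data into the Malcev part (Theorem \ref{MalcevBial}), the commutative associative part (the infinitesimal-bialgebra correspondence from \cite{Bai2010}), the automatic representation conditions, and the cross-compatibilities \eqref{matpoi1}--\eqref{matpoi4} versus \eqref{eq:Bi4}--\eqref{eq:Bi5}, which you dualize by exactly the pairing computation the paper performs for \eqref{matpoi1} and \eqref{matpoi3}.

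There is, however, one genuine error: your disposal of \eqref{matpoi2} and \eqref{matpoi4}. You assert that \eqref{matpoi2}, being the $A\leftrightarrow A^*$ transpose of \eqref{matpoi1}, ``imposes nothing new,'' and likewise that \eqref{matpoi4} is subsumed by \eqref{matpoi3}. But the transposition $A\leftrightarrow A^*$ does not fix the two compatibility conditions --- it interchanges them. Condition \eqref{eq:Bi4} couples the product on $A$ with the bracket on $A^*$ (its leading term $\delta(x\circ y)$ pairs to $\langle x\circ y,[\xi,\eta]^*\rangle$), whereas \eqref{eq:Bi5} couples the bracket on $A$ with the product on $A^*$ (leading term $\Delta([x,y])$ pairs to $\langle [x,y],\xi\circ\eta\rangle$); swapping the roles of $A$ and $A^*$ exchanges these two types. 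Concretely, dualizing \eqref{matpoi2}, whose left-hand side becomes $\mathrm{ad}_A^*(x)(\xi\circ_2\eta)$ and pairs with $y\in A$ to $-\langle\xi\otimes\eta,\Delta([x,y])\rangle$, yields \eqref{eq:Bi5}, not \eqref{eq:Bi4}; dually, \eqref{matpoi4} contains the term $-L^*(x)[\xi,\eta]_2$, which pairs to $\langle\xi\otimes\eta,\delta(x\circ y)\rangle$ and yields \eqref{eq:Bi4}. This is what the paper records: the correct grouping is $\{\eqref{matpoi1},\eqref{matpoi4}\}\leftrightarrow\eqref{eq:Bi4}$ and $\{\eqref{matpoi2},\eqref{matpoi3}\}\leftrightarrow\eqref{eq:Bi5}$. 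Since \eqref{eq:Bi4} and \eqref{eq:Bi5} are independent, your implicit claim that \eqref{matpoi1} implies \eqref{matpoi2} is false, and in the direction (1)$\Rightarrow$(2) your argument leaves \eqref{matpoi2} and \eqref{matpoi4} underived by the route you indicate. The theorem itself survives --- all four identities are still recovered from the pair $\{\eqref{eq:Bi4},\eqref{eq:Bi5}\}$ --- but the repair is to derive \eqref{matpoi2} from \eqref{eq:Bi5} and \eqref{matpoi4} from \eqref{eq:Bi4} (equivalently, to carry out the two further dualization computations, as the paper does ``the same way''), not to appeal to a transposition symmetry that the conditions do not possess.
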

\begin{proof}
According to Theorem \ref{matpair}, the second condition is equivalent to the third condition. Now, we only need to prove that the first condition holds if and only if the second condition holds. In fact, it is known that $(A,\circ, \Delta)$ is a commutative and cocommutative infinitesimal bialgebra if and only if  $(A, A^*, -L^*,\mathcal{-L}_{A^*}^*)$  is a matched pair of commutative associative algebra and according to Theorem \ref{MalcevBial}, $(A,[\cdot,\cdot], \delta)$ is a Malcev bialgebra if and only if $(A, A^*, \mathrm{ad}^*,\mathrm{ad}_{A^*}^*)$  is a matched pair of Malcev algebras. Now, it remains to prove  \eqref{matpoi1} - \eqref{matpoi4} are equivalent to  \eqref{eq:Bi4} and \eqref{eq:Bi5}.
 By    \eqref{matpoi1}, for any $x,y\in A,$ and $\xi, \eta \in A^*,$  
  \begin{align*}
      &\langle L(y)\mathrm{ad}_{A^*}^*(\xi)x+L(x)\mathrm{ad}_{A^*}^*(\xi)y+ \mathcal{L}_{A^*}^*(\mathrm{ad}_{A^*}^*(x)\xi)y+\mathcal{L}_{A^*}^*(\mathrm{ad}_{A^*}^*(y)\xi)x- \mathrm{ad}_{A^*}^*(\xi)(x\circ y),\eta\rangle\\=&\langle \delta(x),\xi\otimes L^*(y)\eta\rangle+\langle \delta(y),\xi\otimes L^*(x)\eta\rangle-\langle \Delta(y),\mathrm{ad}_A^*(x)\xi\otimes\eta\rangle\\&+\langle \Delta(x),\mathrm{ad}_A^*(y)\xi\otimes\eta\rangle-\langle \delta(x\circ y),\xi\otimes\eta\rangle\\=&\langle -(\mathrm{id}\otimes L(y))\delta(x)-(\mathrm{id}\otimes L(x))\delta(y)+(\mathrm{ad}(x)\otimes \mathrm{id})\Delta(y)+(\mathrm{ad}(y)\otimes \mathrm{id})\Delta(x)\\&+\delta(x\circ y),\xi\otimes\eta\rangle=0.
  \end{align*}
  Again, by   \eqref{matpoi3},  
  \begin{align*}
      &\langle -\mathcal{L}_{A^*}^*(\mathrm{ad}_A^*(x)\xi)y-L(y)\mathrm{ad}_{A^*}^*(\xi)x-\mathcal{L}_{A^*}^*(\xi)[x,y]+\mathrm{ad}(x)\mathcal{L}_{A^*}^*(\xi)y-\mathrm{ad}_{A^*}^*(L^*(y)\xi)x,\eta\rangle\\=&\langle\Delta(y),\mathrm{ad}_A^*(x)\xi\otimes\eta\rangle-\langle\delta(x),\xi\otimes L^*(y)\eta\rangle+\langle\Delta([x,y]),\xi\otimes\eta\rangle\\&+\langle\Delta(y),\xi\otimes\mathrm{ad}_A^*(x)\eta\rangle+\langle\delta(x),L^*(y)\xi\otimes\eta\rangle\\=&\langle-(\mathrm{ad}(x)\otimes\mathrm{id})\Delta(y)+(\mathrm{id}\otimes L(y))\delta(x)+\Delta([x,y])-(\mathrm{id}\otimes\mathrm{ad}(x))\Delta(y)\\&-(L(y)\otimes\mathrm{id})\delta(x), \xi\otimes\eta\rangle=0.
  \end{align*}
  The same way, we can prove that the identity \eqref{matpoi4} is equivalent to the identity \eqref{eq:Bi4}, and the identity \eqref{matpoi2} is  equivalent to the identity \eqref{eq:Bi5}.
\end{proof}

\section{Operator forms of the MPYBE and  relative Rota-Baxter operators of Malcev-Poisson algebras}\label{section4}
In this section, we interpret a solution of the Malcev-Poisson Yang–Baxter equation (MPYBE) in terms of relative Rota-Baxter operators. Also, we introduce notions of  $\lambda$-weighted relative Rota-Baxter operators on a Malcev-Poisson algebra and post-Malcev-Poisson algebra as a generalization of post-Poisson algebra. On one hand, $\lambda$-weighted relative Rota-Baxter operator on a Malcev-Poisson gives a post-Malcev-Poisson algebra, and on the other hand, a post-Malcev-Poisson algebra naturally gives rise to a $\lambda$-weighted relative Rota-Baxter operator on the sub-adjacent Malcev-Poisson algebra.
\subsection{Operator forms of the MPYBE}
Let $(A, \circ, [\cdot,\cdot])$ be a Malcev-Poisson  algebra  and $r=\sum\limits_i x_i\otimes y_i\in A^{\otimes 2}$. Set
\begin{equation*}
r_{12}=\sum_ix_i\otimes y_i\otimes 1,\quad r_{13}=\sum_{i}x_i\otimes
1\otimes y_i,\quad r_{23}=\sum_i1\otimes x_i\otimes y_i,
\end{equation*}
where $1$ is a unit element if $(A,\circ, [\cdot,\cdot])$ is unital or a symbol playing a similar role of
the unit for the nonunital cases. In the sense that
$$r_{12}\circ r_{13}=(\sum_ix_i\otimes y_i\otimes 1)\circ( \sum_{j}x_j\otimes
1\otimes y_j)=\sum_{i,j}x_i\circ x_j\otimes y_i\otimes y_j,$$
\begin{equation*}
r_{13}\circ r_{23}=\sum_{i,j}x_i\otimes
x_j\otimes y_i\circ y_j,\;
r_{23}\circ r_{12} =\sum_{i,j}x_j\otimes
x_i\circ y_j\otimes y_i\,,
\end{equation*}
and
\begin{equation*}
[r_{12},r_{13}]=\sum_{i,j}[x_i,x_j]\otimes y_i\otimes
y_j,\; [r_{13}, r_{23}]=\sum_{i,j}x_i\otimes x_j\otimes
[y_i, y_j],\;[r_{12}, r_{23}]=\sum_{i,j}x_i\otimes
[y_i,  x_j]\otimes y_j.\end{equation*}
\begin{defn}
Let $(A, \circ, [\cdot,\cdot])$ be a Malcev-Poisson  algebra  and $r\in A\otimes A$. $r$ satisfies Malcev-Poisson Yang-Baxter equation
(MPYBE) if
\begin{equation}
C_{A}(r)=r_{12}\circ r_{13}-r_{23}\circ r_{12}+r_{13}\circ r_{23}=0,
\label{rslt1}
\end{equation}
\begin{equation}
M_{A}(r)=[r_{12},r_{13}]+[r_{12},r_{23}]+[r_{13},r_{23}]= 0,
\label{rslt2}
\end{equation}
\end{defn}

Equations \eqref{rslt1} and \eqref{rslt2} imply that
$r$ satisfies both the associative Yang-Baxter equation (AYBE) (See \textnormal{\cite{Aguiar1,Aguiar2,Aguiar3}} for more details) and the Malcev Yang-Baxter equation (MYBE)  given in \textnormal{\cite{postMalcev}}.
If $(A, \circ, [\cdot,\cdot])$ be a Malcev-Poisson algebra, then $r\in A\otimes A$ is said to be
skew-symmetric if
\begin{equation}
r=\sum\limits_i x_i\otimes y_i - y_i \otimes x_i. 
\end{equation}
For $r=\sum\limits_i x_i\otimes y_i\in A^{\otimes 2}$, denote 
\begin{equation}
\sigma(r)=\sum\limits_i  y_i \otimes x_i.
\end{equation}
 For a finite-dimensional linear space $V$ over $\mathbb{K}$, $V^*$
refers to the dual space of $V$
and then  $r$ can be regarded as a linear map from  the dual space $V^*$ to $V$ in the following way, for all $a^*,b^*\in V^*$,
\begin{equation}
\langle a^*, r(b^*)\rangle =\langle a^*\otimes b^*,r\rangle, \label{eq:idenrmap}
\end{equation}
where $\langle\cdot , \cdot\rangle$ is the canonical pairing between $V^{*}$ and $V$.

 Let $(V^*, \varrho^*, -\mu^*)$ be the dual representation
of the representation $(V, \varrho, \mu)$ of the Malcev-Poisson
algebra $A$.  Then there is a close relation between the $\mathcal{O}$-operator associated to $(V, \varrho, \mu)$ and the
(skew-symmetric) solutions of MPYBE in $A\ltimes_{(\varrho^*, -\mu^*)}V^*$. To articulate this result, we write $\mathcal{S}(A)$ for the set of all solutions of the
MPYBE on $A$ and denote by $\mathcal{O}_{A}(V, \varrho, \mu)$ the set of all $\mathcal{O}$-operators associated to the representation
$\varrho, \mu: A \to End(V ).$

Any linear map $T : V\to A$  can be identified as an
element in $A\otimes V^*\subset (A\ltimes_{(\varrho^*, -\mu^*)}V^*)\otimes (A\ltimes_{(\varrho^*, -\mu^*)}V)
$ as follows. Let $\{e_1,\cdots,e_n\}$ be a basis of $A$. Let
$\{v_1,\cdots, v_m\}$ be a basis of $V$ and $\{ v_1^*,\cdots,
v_m^*\}$ be its dual basis, that is $v_i^*(v_j)=\delta_{ij}$. We set
$$T(v_i)=\sum\limits_{j=1}^na_{ij}e_j, i=1,\cdots, m.$$ Since as
linear spaces, ${\rm Hom}(V,A)\cong A\otimes V^*$, we
have
$$T=\sum_{i=1}^m T(v_i)\otimes v_i^*=\sum_{i=1}^m\sum_{j=1}^n
a_{ij}e_j\otimes v_i^*\in A\otimes V^*\subset (A\ltimes_{(\varrho^*, -\mu^*)}V^*)\otimes (A\ltimes_{(\varrho^*, -\mu^*)}V^*).$$
\begin{thm}
With the above notations, $r=T-\sigma(T)\in \mathcal{S}(A\ltimes_{(\varrho^*, -\mu^*)}V^*)$   if and only if  $T\in \mathcal{O}_{A}(V, \varrho, \mu).$ 
\end{thm}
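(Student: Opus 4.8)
The plan is to exploit the fact that the MPYBE \eqref{rslt1}--\eqref{rslt2} splits into two independent equations whose verifications decouple. The associative part $C_A(r)=0$ involves only the commutative associative product $\circ$ of the semi-direct product $A\ltimes_{(\varrho^*,-\mu^*)}V^*$, while the Malcev part $M_A(r)=0$ involves only the bracket $[\cdot,\cdot]$. In this semi-direct product $V^*$ is an abelian ideal for \emph{both} operations (so $\xi\circ\eta=0$ and $[\xi,\eta]=0$ for $\xi,\eta\in V^*$), and the mixed $A$--$V^*$ products are governed by $-\mu^*$ for $\circ$ and by $\varrho^*$ for $[\cdot,\cdot]$, namely $x\circ\xi=-\mu^*(x)\xi$ and $[x,\xi]=\varrho^*(x)\xi$. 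Consequently the two $\mathcal{O}$-operator conditions defining $\mathcal{O}_A(V,\varrho,\mu)$, that is $[T(u),T(v)]=T\big(\varrho(T(u))v-\varrho(T(v))u\big)$ and $T(u)\circ T(v)=T\big(\mu(T(u))v+\mu(T(v))u\big)$, can be recovered one at a time.

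First I would treat the Malcev part. Writing $r=T-\sigma(T)=\sum_i\big(T(v_i)\otimes v_i^*-v_i^*\otimes T(v_i)\big)$ and expanding $[r_{12},r_{13}]+[r_{12},r_{23}]+[r_{13},r_{23}]$ via the bracket of $A\ltimes_{(\varrho^*,-\mu^*)}V^*$, I would collect the six resulting families of terms according to which of the three tensor slots lie in $A$ and which in $V^*$; the purely $V^*$--$V^*$ brackets vanish, leaving only the $A$--$A$ contributions (valued in $A$) and the $A$--$V^*$ contributions (produced by $\varrho^*$, valued in $V^*$). Pairing the component living in the relevant $A\otimes A\otimes V^*$-type slot against a test element and using the defining relation \eqref{defeq-dualrepMA} of the dual representation to transfer $\varrho^*$ back onto $\varrho$, these terms assemble exactly into the Malcev $\mathcal{O}$-operator identity. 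Thus $M_A(r)=0$ holds if and only if $T$ satisfies $[T(u),T(v)]=T\big(\varrho(T(u))v-\varrho(T(v))u\big)$; this is the Malcev Yang--Baxter correspondence of \cite{postMalcev}.

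Next I would treat the associative part in the same spirit. Expanding $r_{12}\circ r_{13}-r_{23}\circ r_{12}+r_{13}\circ r_{23}$ with the $\circ$-product of the semi-direct product, the mixed $A$--$V^*$ products are now governed by $-\mu^*$, and pairing the surviving component against the appropriate test element and again invoking \eqref{defeq-dualrepMA} yields the associative $\mathcal{O}$-operator identity. Hence $C_A(r)=0$ if and only if $T(u)\circ T(v)=T\big(\mu(T(u))v+\mu(T(v))u\big)$; this is the associative Yang--Baxter correspondence of Aguiar \cite{Aguiar1,Aguiar2,Aguiar3}. Combining the two equivalences, $r=T-\sigma(T)\in\mathcal{S}(A\ltimes_{(\varrho^*,-\mu^*)}V^*)$, meaning both $C_A(r)=0$ and $M_A(r)=0$, holds if and only if $T$ satisfies both $\mathcal{O}$-operator identities, i.e.\ $T\in\mathcal{O}_A(V,\varrho,\mu)$.

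The main obstacle is the bookkeeping inside the two expansions: for each of the six product terms one must correctly decide which tensor factors land in $A$ and which in $V^*$, and keep consistent the signs coming from the skew-symmetry of $r$, from the minus in the $-\mu^*$ action, and from the defining sign in \eqref{defeq-dualrepMA}. Because $V^*$ is abelian and the associative and Malcev pieces never interact in these particular products, no genuinely new identity beyond the known AYBE and MYBE operator correspondences is required, and the compatibility (Leibniz) axioms of the Malcev-Poisson structure are not invoked in the verification itself.
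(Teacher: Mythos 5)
The paper states this theorem without any proof at all (it is followed immediately by the two examples), so there is no argument of the authors to compare yours against; judged on its own, your proposal is correct and is exactly the standard route the authors evidently have in mind. The decoupling you describe is genuine: $C_A(r)=0$ involves only $\circ$ and the action $-\mu^*$, while $M_A(r)=0$ involves only $[\cdot,\cdot]$ and $\varrho^*$; since $V^*$ is abelian for both operations, expanding each equation leaves only components in $A\otimes V^*\otimes V^*$, $V^*\otimes A\otimes V^*$ and $V^*\otimes V^*\otimes A$, and pairing these against elements of $V$ via \eqref{defeq-dualrepMA} produces precisely the two identities $[T(u),T(v)]=T\bigl(\varrho(T(u))v-\varrho(T(v))u\bigr)$ and $T(u)\circ T(v)=T\bigl(\mu(T(u))v+\mu(T(v))u\bigr)$, whose conjunction is the (only sensible) meaning of $T\in\mathcal{O}_A(V,\varrho,\mu)$ --- a definition the paper itself never spells out, and which your reading correctly identifies as the $\lambda=0$ case of the weighted relative Rota-Baxter operators defined later in Section 4.2. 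Your closing observation is also accurate: the Leibniz compatibility enters only in guaranteeing that $A\ltimes_{(\varrho^*,-\mu^*)}V^*$ is a Malcev-Poisson algebra, not in the verification of the two tensor equations. One minor attribution point: the associative-side correspondence between skew-symmetric solutions of the AYBE in the dual semi-direct product and $\mathcal{O}$-operators is due to Bai and Bai--Guo--Ni \cite{Bai2010,BaiGuoNi} rather than to Aguiar, whose papers \cite{Aguiar1,Aguiar2,Aguiar3} concern the AYBE itself; citing \cite{BaiGuoNi} (for the commutative associative half) and \cite{postMalcev} (for the Malcev half) would make your appeal to known results airtight.
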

\begin{ex}
Let $ ad_A^*, -L^*:A\rightarrow gl(A^*)$ be the coadjoint representation 
of the Malcev-Poisson
algebra $A$. Then $r\in \mathcal{S}(A)$  if and only if $r\in \mathcal{O}_{A}(A^*, ad_A^*, -L^*).$
In fact,  $V = A^{*}$ and $V^{*} = A$. For any
linear map $T : A^{*}\to A$, $T$ can be identified as an element in $A\otimes A$ by
\begin{equation*}
\langle T(x^{*}), y^{*}\rangle = \langle x^{*}\otimes y^{*}, T\rangle, \forall x^{*}, y^{*} \in A^*.
\end{equation*}
Therefore, although $r=T-\sigma(T)\in (A\ltimes_{(ad, -L)}A)\otimes (A\ltimes_{(ad, -L)}A)$, in fact, $r \in A \otimes A$.
\end{ex}
\begin{ex}
Let $ ad, L:A\rightarrow gl(A)$ be the adjoint representation of the Malcev-Poisson
algebra $A$. We suppose that the Malcev-Poisson algebra
$A$ is equipped with a nondegenerate invariant symmetric bilinear form $B(, )$, that is,
$$B(x, y) =B(y, x), B([x, y], z)=B(x, [y, z]), B(x\circ y, z)=B(x, y\circ z). $$ 
Hence, $A^{*}$ is identified with $A$ since it is finite-dimensional. Let $r \in A \otimes A$. Then $r$ can be identified as a linear map from $A$ to
$A$. Then $r\in \mathcal{S}(A)$  if and only if $r\varphi \in \mathcal{O}_{A}(A, ad, L),$
where $\varphi: A\to A$  is a linear
map defined by $\langle \varphi(x),y\rangle = B(x, y)$ for any $x, y\in A.$
\end{ex}

\subsection{ Relative Rota-Baxter operators  and post-Malcev-Poisson algebras}

\begin{defn}[\!\!\cite{BaiGuoNi}]  Let $(A, \circ)$ and $(V, \circ_{V})$ be two commutative associative algebras.
Assume that $\mu: A\rightarrow End(V)$ is a  linear map such that  $(V,\mu)$ is a representation of  $(A,\circ)$. Then, $(V, \circ_{V}, \mu)$ is called an $A$-module associative algebra if
\begin{eqnarray}\label{modulecomass}
\mu(x)(a\circ_{V} b)=(\mu(x)a)\circ_{V} b, \forall\ x,y\in A,\ a,b\in V.
\end{eqnarray}
\end{defn}
\begin{defn}[\!\!\cite{postMalcev}]
Let $(A,[\cdot,\cdot])$ and $(V,[\cdot,\cdot]_V)$ be two Malcev algebras. 
Let
 $\varrho:A  \to End(V)$ be a linear map such that  $(V,\varrho)$ is a representation of  $(A,[\cdot,\cdot])$  and the following compatibility conditions hold for all $x, y, \in A,\ a,b, c \in V:$
\begin{align}\label{repKalgebras1}
&\varrho([x, y])[a,b]_V =\varrho(x)[\varrho(y)a, b]_V- [\varrho(y)\varrho(x)a, b]_V - [\varrho(x)\varrho(y)b, a]_V + \varrho(y)[\varrho(x)b, a]_V ,\\
\label{repKalgebras2} &[\varrho(x)a, \varrho(y)b]_V = [\varrho([x, y])a, b]_V - \varrho(x)[\varrho(y)a, b]_V + \varrho(y)\varrho(x)[a, b]_V + [\varrho(y)\varrho(x)b, a]_V,\\
\label{repKalgebras3} 
&[\varrho(x)a, [b, c ]_V ]_V= [[\varrho(x)b, a]_V, c]_V - \varrho(x)[[b, a]_V,c ]_V  - [\varrho(x)[a, c]_V , b]_V \\ 
&\hspace{9cm} - [[\varrho(x)c, b]_V, a]_V. \nonumber 
\end{align}
Then $(V,[\cdot,\cdot]_V,\varrho)$ is called an $A$-module Malcev algebra.
\end{defn}

\begin{defn}
Let $(A, [\cdot, \cdot], \circ)$ and $(V, [\cdot,\cdot]_V, \circ_{V} )$ be two Malcev-Poisson algebras.
Let  $\varrho, \mu: A\to End(V)$ are two  linear maps such that
\begin{enumerate}[label=\upshape{\arabic*)}, ref=\upshape{\arabic*}, labelindent=5pt, leftmargin=*]
  \item $ (V, \{\cdot,\cdot\}_V, \varrho)$ is an $A$-module Malcev algebras.
  \item $(V, \circ_{V}, \mu)$ is an $A$-module associative algebras.
  \item  $(V, \mu, \varrho)$ is a representation of $A$.
  \item  For any $x,y\in A$ and $a,b\in V$, the following equations hold:
\begin{eqnarray}
&&\label{modulemalpoi1}\varrho(x)(a\circ_{V} b)=(\varrho(x)a)\circ_{V} b + a\circ_{V} (\varrho(x)b),
\\
&& \label{modulemalpoi3}[a, \mu(x)b]_V = -(\varrho(x)a))\circ_{V} b + \mu(y)[a, b]_V.
\end{eqnarray}
\end{enumerate}
Then $(V, \circ_{V}, [\cdot,\cdot]_V, \mu, \varrho)$ is called an $A$-module Malcev-Poisson algebras.
\end{defn}
\begin{prop}
With notations as above, $(V, \circ_{V}, [\cdot,\cdot]_V, \mu, \varrho)$
is an $A$-module Malcev-Poisson algebra if and only if the direct sum of linear spaces $A\oplus V$
is turned into a Malcev-Poisson algebra by defining the operations for any $x,y\in A$ and $a,b\in V$ by 
\begin{eqnarray*}
&&\{x+a, y+b\}=[x,y]+ \varrho(x)b- \varrho(y)a+[a,b]_V,\\
&&(x+a)\bullet (y+b)=x\circ y+ \mu(x)b+\mu(y)a+ a\circ_{V} b.
\end{eqnarray*}
This is called the semi-direct product, often denoted by $A\ltimes V$.
\end{prop}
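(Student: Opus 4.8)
The plan is to verify directly that the two operations $\{\cdot,\cdot\}$ and $\bullet$ on $A\oplus V$ satisfy the three defining axioms of a Malcev-Poisson algebra, namely that $(A\oplus V,\{\cdot,\cdot\})$ is a Malcev algebra, that $(A\oplus V,\bullet)$ is a commutative associative algebra, and that the Leibniz rule \eqref{eq:malpoisalg} holds, and then to show that each of these splits into exactly the listed module conditions. Antisymmetry of $\{\cdot,\cdot\}$ and commutativity of $\bullet$ are immediate from the antisymmetry of $[\cdot,\cdot]$, $[\cdot,\cdot]_V$ and the commutativity of $\circ$, $\circ_V$, so the content lies in the three structural identities. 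For the bracket, the expression $\{x+a,y+b\}=[x,y]+\varrho(x)b-\varrho(y)a+[a,b]_V$ is precisely the semi-direct product bracket attached to the pair $(V,[\cdot,\cdot]_V,\varrho)$, so by the corresponding result for module Malcev algebras (\cite{postMalcev}) the algebra $(A\oplus V,\{\cdot,\cdot\})$ is a Malcev algebra if and only if $(V,[\cdot,\cdot]_V,\varrho)$ is an $A$-module Malcev algebra, that is, $\varrho$ is a representation in the sense of \eqref{representation} and \eqref{repKalgebras1}--\eqref{repKalgebras3} hold. Likewise $(x+a)\bullet(y+b)=x\circ y+\mu(x)b+\mu(y)a+a\circ_V b$ is the semi-direct product of commutative associative algebras, so by \cite{BaiGuoNi,Bai2010} the algebra $(A\oplus V,\bullet)$ is commutative associative if and only if $(V,\circ_V,\mu)$ is an $A$-module associative algebra, that is, $\mu$ is a representation and \eqref{modulecomass} holds. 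These two steps account for the first two conditions in the definition of an $A$-module Malcev-Poisson algebra.

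It then remains to analyse the Leibniz rule $\{w_1,w_2\bullet w_3\}=\{w_1,w_2\}\bullet w_3+w_2\bullet\{w_1,w_3\}$ with $w_i=x_i+a_i$. Since both sides are trilinear in $(w_1,w_2,w_3)$ and $A\oplus V$ is a direct sum of linear spaces, the identity holds for all arguments if and only if it holds on every homogeneous substitution, i.e. for each choice of taking the $A$-part or the $V$-part in each of the three slots; this organises the verification by the number of $V$-entries, and projecting the defect onto $A$ and onto $V$ gives independent conditions. The $A$-component of the defect reduces to the Leibniz rule \eqref{eq:malpoisalg} of $(A,[\cdot,\cdot],\circ)$ and hence vanishes automatically, while the component with all three entries in $V$ reduces to the Leibniz rule of $(V,[\cdot,\cdot]_V,\circ_V)$ and likewise vanishes. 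The genuinely new relations come from the $V$-component with one and with two $V$-entries.

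Carrying out the expansion, the terms with exactly one $V$-entry produce $\varrho(x\circ y)=\mu(y)\varrho(x)+\mu(x)\varrho(y)$ and $\mu([x,y])=\varrho(x)\mu(y)-\mu(y)\varrho(x)$, which are \eqref{eq:repmalcpoisalg1}--\eqref{eq:repmalcpoisalg2}, i.e. exactly the statement that $(V,\mu,\varrho)$ is a representation of the Malcev-Poisson algebra $A$. The terms with exactly two $V$-entries fall into three sub-cases: placing the $A$-entry in the first slot yields $\varrho(x)(a\circ_V b)=(\varrho(x)a)\circ_V b+a\circ_V(\varrho(x)b)$, which is \eqref{modulemalpoi1}, while placing the $A$-entry in the second or in the third slot both yield $[a,\mu(x)b]_V=-(\varrho(x)a)\circ_V b+\mu(x)[a,b]_V$, which is \eqref{modulemalpoi3}. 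Thus, given the first two conditions, the Leibniz rule on $A\oplus V$ is equivalent to the representation condition together with \eqref{modulemalpoi1} and \eqref{modulemalpoi3}, which completes the equivalence. The main labour is the bookkeeping in this expansion; the key simplification is the multilinear separation by $V$-degree, which isolates each compatibility condition and makes the automatic cancellation of the $A$-part and of the top $V$-degree transparent.
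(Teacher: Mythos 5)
Your proposal is correct and follows essentially the same route as the paper: both reduce the problem to verifying the Leibniz rule \eqref{eq:malpoisalg} on $A\oplus V$ (delegating the Malcev and commutative associative parts to the $A$-module Malcev algebra and $A$-module associative algebra results), and both read off \eqref{eq:repmalcpoisalg1}--\eqref{eq:repmalcpoisalg2} and \eqref{modulemalpoi1}, \eqref{modulemalpoi3} as the exact content of that rule. The only difference is presentational: the paper expands the Leibniz identity on generic elements $x+a$, $y+b$, $z+c$ and compares the two sides wholesale, whereas you organize the same expansion by the number of $V$-entries, which is cleaner bookkeeping but not a different argument.
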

\begin{proof}

We only need to prove that \eqref{eq:malpoisalg} holds.
In fact, on one hand, 
\begin{eqnarray*}
&&\{x+ a,(y+b) \bullet (z+c)\}\\
&=&\{x+a, y\circ z+ \mu(y)c+\mu(z)b+ b\circ_{V} c\}\\
&=&[x, y\circ z]+ \varrho(x)\mu(y)c+ \varrho(x)\mu(z)b+ \varrho(x)(b\circ_V c)\\
&&- \varrho(y\circ z)a+[a,\mu(y)c]_V+[a,\mu(z)b]_V +[a,b\circ_V c]_V.
\end{eqnarray*}
On the other hand, 
\begin{eqnarray*}
&&\{x+a, y+b\}\bullet (z+c)+ (y+b)\bullet \{x+a, z+c\}\\
&=&([x,y]+ \varrho(x)b- \varrho(y)a+[a,b]_V)\bullet (z+c)\\
&&+ (y+b)\bullet([x,z]+ \varrho(x)c- \varrho(z)a+[a,c]_V)\\
&=&[x,y]\circ z+ \mu([x,y])c+\mu(z)\varrho(x)b-\mu(z)\varrho(y) a\\
&&+\mu(z)[a,b]_V+ (\varrho(x)b)\circ_{V} c - (\varrho(y)a)\circ_{V} c + ([a, b])\circ_{V} c\\
  &&+ y\circ [x,z]  +\mu(y)\varrho(x)c-\mu(y)\varrho(z) a+\mu(y)[a,c]_V\\
  &&+ \mu([x,z])b+ b\circ_{V} (\varrho(x)c)- b\circ_{V} (\varrho(z)a)+ b\circ_{V} ([a,c]).
\end{eqnarray*}
Thus, $(A\ltimes V,\bullet,\{\cdot,\cdot\})$ is a Malcev-Poisson algebra  if and only if $(V, \circ_{V}, [\cdot,\cdot]_V, \mu, \varrho)$  is an $A$-module Malcev-Poisson algebra.
    
\end{proof}
\begin{ex}
Let $(A, [\cdot, \cdot], \circ)$  be a Malcev-Poisson algebra. Then, $(A, \mathrm{ad}_{[,]}, L_\circ)$ is an $A$-module Malcev-Poisson algebra.
\end{ex}
\begin{defn}[\!\!\cite{Loday01}]
A  $\textbf{commutative dendriform trialgebra}$ is a $\mathbb{K}$-linear space $A$
equipped with two  bilinear operations $(\cdot, \vartriangleright)$ where $(A,\cdot)$ is a commutative associative algebras satisfying the following equations, for any $x,y,z\in A$, 
\begin{eqnarray}
&&(x\circ y)\vartriangleright z=x\vartriangleright (y\vartriangleright z),
\\
&& (x\vartriangleright y)\cdot z=x\vartriangleright (y\cdot z).
\end{eqnarray}
\end{defn}
 \begin{defn}[\hspace{-0.1pt}\cite{postMalcev}]
 \label{def:postmalcev}
A $\textbf{post-Malcev algebra}$ $(A,[\cdot,\cdot],\diamond)$ is a Malcev algebra $(A,[\cdot,\cdot])$
together with a bilinear map $\diamond : A\otimes A \to A$ such that  for all $x,y,z \in A$,
\begin{align}
\label{postmalcevalgebras1}
&\{x, z\}\diamond [y, t] = x\diamond[z\diamond y, t]- [z\diamond(x\diamond y), t] - [x\diamond(z\diamond t), y] + z\diamond [x\diamond t, y],
\\
\label{postmalcevalgebras2}
&[x\diamond z, y\diamond t]= [\{x, y\}\diamond z, t] -  x\diamond[y\diamond z, t] +  y\diamond(x\diamond[z, t]) + [y\diamond(x\diamond t), z], \\
\label{postmalcevalgebras3}
&[x\diamond z, [y, t]] = [[x\diamond y, z], t] - x\diamond [[y, z], t] - [x\diamond[z, t], y] - [[x\diamond t, y], z],
\\
\label{postmalcevalgebras4}
&\{y,z\}\diamond(x\diamond t)=\{\{x,y\},z\}\diamond t+y\diamond(\{x,z\}\diamond t)+x\diamond(y\diamond( z\diamond t))-z\diamond(x\diamond( y\diamond t)).
\end{align}
\end{defn}
\begin{prop}\label{postmalcev==>malcev}
Let $(A,[\cdot,\cdot],\diamond)$ be a post-Malcev algebra. 
Then the bracket
\begin{equation}\label{malcevbracket}
      \{x,y\}=x\diamond y- y\diamond x+ [x,y]
\end{equation}
defines a Malcev algebra structure on $A$. We denote this algebra by $A^C$ and we call it the sub-adjacent Malcev algebra of $A$. Furthermore, $\mathcal{L}_{\diamond} : A\to A$ defined for all $x, y\in  A$ by
\begin{equation}
 \mathcal{L}_{\diamond}(x)y = x\diamond y, 
 \end{equation}
gives an $A$-module Malcev algebra of $A^C$ on $A$.
\end{prop}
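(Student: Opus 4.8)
The plan is to treat the two assertions of the proposition separately: that $\{\cdot,\cdot\}$ is a Malcev bracket, and that $(A,[\cdot,\cdot],\mathcal{L}_\diamond)$ is an $A^C$-module Malcev algebra. Antisymmetry of $\{\cdot,\cdot\}$ is immediate, since $\{y,x\}=y\diamond x-x\diamond y+[y,x]=-\{x,y\}$ by antisymmetry of $[\cdot,\cdot]$. It therefore remains to establish Sagle's identity \eqref{maltsev2} for $\{\cdot,\cdot\}$, and, for the module statement, to check that $\mathcal{L}_\diamond$ is a representation of $(A^C,\{\cdot,\cdot\})$ in the sense of \eqref{representation} together with the three compatibility conditions \eqref{repKalgebras1}--\eqref{repKalgebras3} (recall that being an $A^C$-module Malcev algebra requires both that $\mathcal{L}_\diamond$ represent $A^C$ and that these conditions hold, with the module bracket $[\cdot,\cdot]_V$ taken to be the given $[\cdot,\cdot]$).

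First I would dispatch the compatibility conditions, which require no real computation: substituting $\varrho=\mathcal{L}_\diamond$, $[\cdot,\cdot]_V=[\cdot,\cdot]$ and the acting bracket $\{\cdot,\cdot\}$ into \eqref{repKalgebras1}, \eqref{repKalgebras2} and \eqref{repKalgebras3}, and relabelling the arguments, one recovers verbatim the post-Malcev axioms \eqref{postmalcevalgebras1}, \eqref{postmalcevalgebras2} and \eqref{postmalcevalgebras3} respectively. Thus these three conditions hold by hypothesis. The remaining module requirement is the representation property \eqref{representation} for $\mathcal{L}_\diamond$; writing it out on a test element and expanding the sub-adjacent brackets $\{\{x,y\},z\}$, $\{y,z\}$ and $\{x,z\}$ through the definition \eqref{malcevbracket}, I expect it to reduce to axiom \eqref{postmalcevalgebras4} once the monomials produced by the $\diamond$-parts are reorganized with the help of \eqref{postmalcevalgebras1}--\eqref{postmalcevalgebras3}.

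The substantive step is Sagle's identity for $\{\cdot,\cdot\}$. The plan is to insert $\{a,b\}=a\diamond b-b\diamond a+[a,b]$ into every bracket occurring in \eqref{maltsev2} and to sort the resulting terms by their $\diamond$-degree, that is, by the number of $\diamond$-factors each monomial carries. The degree-zero part is exactly Sagle's identity for $[\cdot,\cdot]$, which vanishes because $(A,[\cdot,\cdot])$ is a Malcev algebra by the definition of a post-Malcev algebra. The parts of $\diamond$-degree one, two and three are then grouped so that each group coincides with the difference of the two sides of one of the four post-Malcev axioms; applying \eqref{postmalcevalgebras1}--\eqref{postmalcevalgebras4} in turn shows that every group vanishes. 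This bookkeeping, rather than any conceptual difficulty, is where the main effort lies: the fully expanded Sagle identity contains a large number of monomials, and the chief obstacle is to collect them into the precise combinations in which the axioms become applicable.

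Once both parts are in hand the conclusion follows at once: $\{\cdot,\cdot\}$ makes $A^C$ a Malcev algebra, and since $\mathcal{L}_\diamond$ is then a representation of $A^C$ satisfying \eqref{repKalgebras1}--\eqref{repKalgebras3} with module algebra $(A,[\cdot,\cdot])$, the triple $(A,[\cdot,\cdot],\mathcal{L}_\diamond)$ is an $A^C$-module Malcev algebra, as asserted.
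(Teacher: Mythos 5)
Your treatment of the module compatibilities is fine: under the substitution $\varrho=\mathcal{L}_\diamond$, $[\cdot,\cdot]_V=[\cdot,\cdot]$, acting bracket $\{\cdot,\cdot\}$, conditions \eqref{repKalgebras1}--\eqref{repKalgebras3} are indeed verbatim \eqref{postmalcevalgebras1}--\eqref{postmalcevalgebras3}. (For reference, the paper itself states this proposition without proof, importing it from \cite{postMalcev}, where it is established by a direct expansion.) The genuine gap is in your central step: proving Sagle's identity \eqref{maltsev2} for $\{\cdot,\cdot\}$ by sorting the expansion into $\diamond$-homogeneous components and killing each component with one post-Malcev axiom. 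This cannot work. First, the axioms \eqref{postmalcevalgebras1}, \eqref{postmalcevalgebras2} and \eqref{postmalcevalgebras4} are not homogeneous in $\diamond$-degree: each contains the bracket $\{\cdot,\cdot\}$, whose expansion $x\diamond y-y\diamond x+[x,y]$ mixes a degree-one and a degree-zero part (only \eqref{postmalcevalgebras3} is homogeneous), so a degree-homogeneous group of terms can never ``coincide with the difference of the two sides'' of such an axiom. Second, and decisively, the homogeneous components of the expanded Sagle identity do \emph{not} vanish separately --- only their sum does. Your degree-three component is exactly Sagle's identity for the antisymmetrization $\langle x,y\rangle:=x\diamond y-y\diamond x$, and this fails in genuine post-Malcev algebras, so the degree-by-degree scheme would prove a false statement.

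Here is a concrete counterexample built from the paper's own machinery. Take $A=\mathfrak{sl}_2(\mathbb{K})$ with basis $e,f,h$, $[h,e]=2e$, $[h,f]=-2f$, $[e,f]=h$, and let $R=-P$, where $P$ is the projection onto $\mathbb{K}f$ along $\mathrm{span}(h,e)$; one checks directly that $R$ satisfies \eqref{RB2} with $\lambda=1$, so by Lemma~\ref{lem:liealgtopostliealg} (with $V=A$, $\varrho=\mathrm{ad}$, $\lambda=1$) the operations $[\cdot,\cdot]$ and $x\diamond y=[R(x),y]$ make $A$ a post-Malcev algebra. Then $\langle h,e\rangle=0$, $\langle h,f\rangle=2f$, $\langle e,f\rangle=-h$, and evaluating the two sides of \eqref{maltsev2} for $\langle\cdot,\cdot\rangle$ at $x=h$, $y=e$, $z=f$, $t=f$ gives
\begin{align*}
\langle\langle h,f\rangle,\langle e,f\rangle\rangle&=\langle 2f,-h\rangle=4f,\\
\langle\langle\langle h,e\rangle,f\rangle,f\rangle+\langle\langle\langle e,f\rangle,f\rangle,h\rangle+\langle\langle\langle f,f\rangle,h\rangle,e\rangle+\langle\langle\langle f,h\rangle,e\rangle,f\rangle&=0+4f+0-4f=0,
\end{align*}
so the degree-three component of the Sagle expansion is $4f\neq 0$, even though the full expansion for $\{\cdot,\cdot\}$ vanishes (here $\{h,e\}=2e$, $\{h,f\}=\{e,f\}=0$, a Lie bracket). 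The cancellation therefore necessarily runs \emph{across} different $\diamond$-degrees, mediated by the non-homogeneous axioms; any correct proof must expand everything and use the axioms as wholes, which is precisely the long computation in \cite{postMalcev}. A smaller instance of the same problem affects your module step: the representation identity \eqref{representation} for $\mathcal{L}_\diamond$ is not a relabelling of \eqref{postmalcevalgebras4} --- in operator form the latter reads $\mathcal{L}_\diamond(\{\{x,y\},z\})=\mathcal{L}_\diamond(\{y,z\})\mathcal{L}_\diamond(x)-\mathcal{L}_\diamond(y)\mathcal{L}_\diamond(\{x,z\})-\mathcal{L}_\diamond(x)\mathcal{L}_\diamond(y)\mathcal{L}_\diamond(z)+\mathcal{L}_\diamond(z)\mathcal{L}_\diamond(x)\mathcal{L}_\diamond(y)$, which differs from \eqref{representation} in three terms --- so that derivation, which you only sketch as an expectation, also remains to be carried out.
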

\begin{defn}
A \textbf{post-Malcev-Poisson algebra} is tuple $(A,[\cdot,\cdot],\diamond,\cdot,\vartriangleright)$, where  $(A,[\cdot,\cdot],\cdot)$  is a Malcev-Poisson algebra,  $(A,[\cdot,\cdot],\diamond)$  is a post-Malcev algebra and  $(A,\cdot,\vartriangleright)$ is a commutative dendriform trialgebra satisfying the following compatible conditions, for any $x,y,z\in A$,
\begin{equation}\label{eq:postmalcpoi1}
x \diamond (y \cdot z) = (x \diamond y) \cdot z + y \cdot (x \diamond z),
\end{equation}
 \begin{equation}\label{eq:postmalcpoi2}
 [x, z\vartriangleright y] = z\vartriangleright [x, y] - y \cdot (z \diamond x),
\end{equation}
\begin{equation}\label{eq:postmalcpoi3}
( y \circ  z) \diamond x = z\vartriangleright(y \diamond x) + y\vartriangleright(z \diamond x),
\end{equation}
\begin{equation}\label{eq:postmalcpoi4}
\{x, z\} \vartriangleright y = x \diamond (z\vartriangleright y)-z\vartriangleright(x \diamond y),
 \end{equation}
 where the operation $\circ$ and bracket $\{\cdot,\cdot\}$ are defined by
\begin{equation}\label{eq:postmanifold-operations}
  x\circ y=x\vartriangleright y+y\vartriangleright x + x\cdot y,\quad \{x,y\}=x\diamond y-y\diamond x + [x, y].
\end{equation}
\end{defn}
\begin{ex}
\begin{enumerate}[label=\upshape{\arabic*)}, ref=\upshape{\arabic*}, labelindent=5pt, leftmargin=*]
  \item As Malcev algebras are a generalization of Lie algebras, the post-Malcev-Poisson algebras are a generalization of post-Poisson algebras \textup{\cite{NiBai}}.
  \item Let $(A, [\cdot,\cdot], \diamond, \cdot, \vartriangleright)$ be a post-Malcev-Poisson algebra. If the operations $[\cdot, \cdot]$ and $\cdot$ are trivial, then it
is a pre-Malcev-Poisson algebra introduced in \cite{Fatma} in the study of relative Rota-Baxter operators of weight zero on Malcev-Poisson algebras (see \cite{Sami}) which is a generalization of pre-Poisson algebra given by M. Aguiar  in \cite{Aguiar00}.
\end{enumerate}
\end{ex}

We get the following conclusion.

\begin{thm}
The triple  $(A,\circ,\{\cdot,\cdot\})$ is a Malcev-Poisson algebra, 
where the operation $\circ$ and bracket 
$\{\cdot,\cdot\}$ are given by \eqref{eq:postmanifold-operations}, which is called the {\textbf associated
Malcev-Poisson algebra} of
$(A, [\cdot,\cdot], \diamond, \cdot, \vartriangleright)$ and is denoted by $A^C$.
\end{thm}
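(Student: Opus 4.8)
The plan is to verify the three defining axioms of a Malcev-Poisson algebra for the operations $\circ$ and $\{\cdot,\cdot\}$ given by \eqref{eq:postmanifold-operations}: that $(A,\{\cdot,\cdot\})$ is a Malcev algebra, that $(A,\circ)$ is a commutative associative algebra, and that the Leibniz rule \eqref{eq:malpoisalg} relating the two holds. The first axiom is immediate: since $(A,[\cdot,\cdot],\diamond)$ is by hypothesis a post-Malcev algebra and the bracket $\{x,y\}=x\diamond y-y\diamond x+[x,y]$ is exactly the bracket \eqref{malcevbracket}, Proposition~\ref{postmalcev==>malcev} shows at once that $(A,\{\cdot,\cdot\})$ is a Malcev algebra (its sub-adjacent Malcev algebra $A^{C}$).

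For the second axiom, commutativity of $\circ$ is clear: since $\cdot$ is commutative and the two terms $x\vartriangleright y$ and $y\vartriangleright x$ are interchanged when $x$ and $y$ are swapped, $x\circ y=y\circ x$. Associativity of $\circ$ is the standard sub-adjacent construction for a commutative dendriform trialgebra: expanding $(x\circ y)\circ z$ and $x\circ(y\circ z)$ into their summands and repeatedly applying the two trialgebra identities $(x\circ y)\vartriangleright z=x\vartriangleright(y\vartriangleright z)$ and $(x\vartriangleright y)\cdot z=x\vartriangleright(y\cdot z)$, together with the associativity and commutativity of $\cdot$, one matches the two sides term by term.

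The third axiom, the Leibniz rule $\{x,y\circ z\}=\{x,y\}\circ z+y\circ\{x,z\}$, is the heart of the proof and the main obstacle, because both $y\circ z$ and each bracket split into three summands, so fully expanded the identity involves about a dozen monomials on each side. I would expand the left-hand side by writing $\{x,w\}=x\diamond w-w\diamond x+[x,w]$ with $w=y\vartriangleright z+z\vartriangleright y+y\cdot z$, and treat the resulting pieces as follows: $x\diamond(y\cdot z)$ is rewritten by \eqref{eq:postmalcpoi1}; the term $w\diamond x=(y\circ z)\diamond x$ is rewritten by \eqref{eq:postmalcpoi3}; the brackets $[x,y\vartriangleright z]$ and $[x,z\vartriangleright y]$ are rewritten by \eqref{eq:postmalcpoi2}; and $[x,y\cdot z]$ is rewritten by the Leibniz rule \eqref{eq:malpoisalg} of the Malcev-Poisson algebra $(A,[\cdot,\cdot],\cdot)$. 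On the right-hand side I would expand $\{x,y\}\circ z$ and $y\circ\{x,z\}$ using \eqref{eq:postmanifold-operations}, and apply \eqref{eq:postmalcpoi4} to the terms of the form $\{x,y\}\vartriangleright z$ and $\{x,z\}\vartriangleright y$.

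After these substitutions the several $\vartriangleright$-of-$\diamond$ terms (such as $\pm\,y\vartriangleright(x\diamond z)$ and $\pm\,z\vartriangleright(x\diamond y)$) cancel in pairs across the two sides, and the remaining $\diamond$-, $\vartriangleright$-, $\cdot$- and $[\cdot,\cdot]$-monomials match one for one. The only delicate point is the bookkeeping: one must group the monomials by type so that each compatibility condition \eqref{eq:postmalcpoi1}--\eqref{eq:postmalcpoi4} cancels precisely its matching block, and use the commutativity of $\cdot$ to identify terms such as $(y\diamond x)\cdot z$ with $z\cdot(y\diamond x)$. Once the Leibniz rule is established, the three axioms together give that $(A,\circ,\{\cdot,\cdot\})$ is a Malcev-Poisson algebra, which is the claimed associated Malcev-Poisson algebra $A^{C}$.
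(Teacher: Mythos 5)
Your proposal is correct and follows essentially the same route as the paper: the Malcev part is delegated to Proposition~\ref{postmalcev==>malcev}, and the Leibniz rule is verified by expanding $\{x,y\circ z\}$ and applying \eqref{eq:postmalcpoi1}--\eqref{eq:postmalcpoi4} together with the Leibniz rule of $(A,[\cdot,\cdot],\cdot)$, exactly as in the paper's computation (your use of \eqref{eq:postmalcpoi4} from the right-hand side rather than the left is the same cancellation read in reverse). The only difference is that you sketch the associativity of $\circ$ directly from the commutative dendriform trialgebra axioms, whereas the paper attributes it (somewhat loosely) to Proposition~\ref{postmalcev==>malcev}; your more explicit treatment is, if anything, the more careful one.
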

\begin{proof}
According to Proposition \ref{postmalcev==>malcev}, we have that $(A,\circ)$ is a commutative associative algebra, and $(A,\{\cdot,\cdot\})$ is a Malcev algebra.
It remains to show that identity \eqref{eq:malpoisalg} holds. In fact, 
\begin{eqnarray*}
   \{x,y \circ z\}&=& \{x, y\vartriangleright z+z\vartriangleright y + y\cdot z\}\\
  &=& x\diamond (y\vartriangleright z)+ x\diamond (z\vartriangleright y)+ x\diamond (y\cdot z)- (y\vartriangleright z)\diamond x- (z\vartriangleright y)\diamond x - (y\cdot z)\diamond x\\
  &&+[x,y\vartriangleright z]+[x,z\vartriangleright y]+[x,y\cdot z]\\
   &=& y \vartriangleright(x\diamond z) + \{x,y\}\vartriangleright z+z \vartriangleright(x\diamond y) + \{x,z\}\vartriangleright y\\
   &&+(x\diamond y)\cdot z+y\cdot(x\diamond z)-z\vartriangleright(y\diamond x)-y\vartriangleright(z\diamond x)\\
   &&+y\vartriangleright[x,z]-z\cdot(y\diamond x)+z\vartriangleright [x,y]-y\cdot(z\diamond x)+[x,y]\cdot z+y\cdot[x,z]\\
   &=&y\vartriangleright\big( x\diamond z-z\diamond x+[x,z]\big)+ \{x,z\}\vartriangleright y\\
   &&+y\cdot\big(x\diamond z-z\diamond x+[x,z]\big)+\{x,y\}\vartriangleright z\\
   &&+ z\vartriangleright\big(x\diamond y-y\diamond x+[x,y]\big)+\big(x\diamond y-y\diamond x+[x,y]\big)\cdot z\\
   &=&y\vartriangleright\{x,z\}+\{x,z\}\vartriangleright y+y\cdot\{x,z\}+\{x,y\}\vartriangleright z+z\vartriangleright \{x,y\}+\{x,y\}\cdot z\\
   &=& y\circ \{x,z\}+ \{x,y\}\circ z.
\end{eqnarray*}
Thus $(A,\circ,\{\cdot,\cdot\})$ is a Malcev-Poisson algebra.
\end{proof}

\begin{prop}
Let  $(A,[\cdot,\cdot],\diamond,\cdot,\vartriangleright)$ be a post-Malcev-Poisson algebra. Define
$\mathcal{L}_\diamond, \mathcal{L}_\vartriangleright:A\to A$ by $\mathcal{L}_{\diamond}(x)y=x\diamond y$ and $\mathcal{L}_{\vartriangleright}(x)y=x\vartriangleright y$ for any $x,y\in A$.
Then,
$(A, [\cdot,\cdot], \cdot, \mathcal{L}_\diamond, \mathcal{L}_\vartriangleright)$ is an $A$-module Malcev-Poisson algebra of  $(A^{C},\circ,\{\cdot,\cdot\})$.
\end{prop}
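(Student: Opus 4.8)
The plan is to check, one by one, the four clauses in the definition of an $A$-module Malcev-Poisson algebra, reading each off directly from an axiom already available for $(A,[\cdot,\cdot],\diamond,\cdot,\vartriangleright)$. No new Malcev-Poisson algebra structure needs to be constructed: the ambient algebra is $A^{C}=(A,\circ,\{\cdot,\cdot\})$, which is a Malcev-Poisson algebra by the theorem just proved, and the module Malcev-Poisson algebra is $(A,[\cdot,\cdot],\cdot)$, which is a Malcev-Poisson algebra by the very definition of a post-Malcev-Poisson algebra. The two structure maps are $\varrho=\mathcal{L}_\diamond$ (attached to the bracket) and $\mu=\mathcal{L}_\vartriangleright$ (attached to the commutative product), so the whole proof amounts to a dictionary between the post-structure axioms and the module axioms, always remembering that $\circ$ and $\{\cdot,\cdot\}$ are not independent data but are given by \eqref{eq:postmanifold-operations}.

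For the Malcev-module clause, namely that $(A,[\cdot,\cdot],\mathcal{L}_\diamond)$ is an $A^{C}$-module Malcev algebra, I would simply invoke Proposition \ref{postmalcev==>malcev} applied to the underlying post-Malcev algebra $(A,[\cdot,\cdot],\diamond)$; that proposition already states that $\mathcal{L}_\diamond$ makes $A$ an $A^{C}$-module Malcev algebra. For the associative-module clause, that $(A,\cdot,\mathcal{L}_\vartriangleright)$ is an $A^{C}$-module associative algebra, I would use the two commutative dendriform trialgebra identities: $(x\circ y)\vartriangleright z=x\vartriangleright(y\vartriangleright z)$ says exactly $\mathcal{L}_\vartriangleright(x\circ y)=\mathcal{L}_\vartriangleright(x)\mathcal{L}_\vartriangleright(y)$, so $(A,\mathcal{L}_\vartriangleright)$ represents $(A,\circ)$, while $(x\vartriangleright y)\cdot z=x\vartriangleright(y\cdot z)$ is the module-associativity identity \eqref{modulecomass} for $\mathcal{L}_\vartriangleright$.

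For the representation clause, that $(A,\mathcal{L}_\diamond,\mathcal{L}_\vartriangleright)$ is a representation of the Malcev-Poisson algebra $A^{C}$, I would verify \eqref{eq:repmalcpoisalg1}--\eqref{eq:repmalcpoisalg2} with $\varrho=\mathcal{L}_\diamond$, $\mu=\mathcal{L}_\vartriangleright$ and the operations $\circ,\{\cdot,\cdot\}$ of $A^{C}$: evaluated on an argument, \eqref{eq:repmalcpoisalg1} becomes $(x\circ y)\diamond z=y\vartriangleright(x\diamond z)+x\vartriangleright(y\diamond z)$, which is \eqref{eq:postmalcpoi3} up to relabeling, and \eqref{eq:repmalcpoisalg2} becomes $\{x,y\}\vartriangleright z=x\diamond(y\vartriangleright z)-y\vartriangleright(x\diamond z)$, which is \eqref{eq:postmalcpoi4}. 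Finally, for the two compatibility equations of the module definition, \eqref{modulemalpoi1} with $\varrho=\mathcal{L}_\diamond$, $\circ_V=\cdot$ reads $x\diamond(a\cdot b)=(x\diamond a)\cdot b+a\cdot(x\diamond b)$, which is \eqref{eq:postmalcpoi1}, and \eqref{modulemalpoi3} with $\mu=\mathcal{L}_\vartriangleright$, $\varrho=\mathcal{L}_\diamond$ reads $[a,x\vartriangleright b]=-(x\diamond a)\cdot b+x\vartriangleright[a,b]$, which is \eqref{eq:postmalcpoi2} after using commutativity of $\cdot$ to rewrite $b\cdot(x\diamond a)=(x\diamond a)\cdot b$.

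Since every clause reduces to a single already-established identity, I do not anticipate a genuine obstacle; the only care needed is to keep the roles of $\varrho=\mathcal{L}_\diamond$ and $\mu=\mathcal{L}_\vartriangleright$ straight (the Malcev part is governed by $\diamond$, the associative part by $\vartriangleright$) and to expand every occurrence of $\circ$ and $\{\cdot,\cdot\}$ via \eqref{eq:postmanifold-operations} before comparing, together with the one place where commutativity of $\cdot$ must be used to align \eqref{modulemalpoi3} with \eqref{eq:postmalcpoi2}.
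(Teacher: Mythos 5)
Your proposal is correct and follows essentially the same route as the paper: the Malcev-module clause is delegated to Proposition \ref{postmalcev==>malcev}, and the compatibility conditions \eqref{modulemalpoi1} and \eqref{modulemalpoi3} are read off from \eqref{eq:postmalcpoi1} and \eqref{eq:postmalcpoi2} (with commutativity of $\cdot$). You are in fact more thorough than the paper, whose proof leaves the associative-module clause (your use of the two dendriform trialgebra identities) and the representation clause (your matching of \eqref{eq:repmalcpoisalg1}--\eqref{eq:repmalcpoisalg2} with \eqref{eq:postmalcpoi3}--\eqref{eq:postmalcpoi4}) implicit.
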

\begin{proof}
By Proposition \ref{postmalcev==>malcev}, $(A; [\cdot,\cdot], \mathcal{L}_\diamond)$ is an $A$-module Malcev algebra of the sub-adjacent Malcev algebra $A^{C}$. Moreover,  \eqref{eq:postmalcpoi1} implies that   \eqref{modulemalpoi1} holds, and \eqref{eq:postmalcpoi2} implies that \eqref{modulemalpoi3} holds. Thus $(A, [\cdot,\cdot],\cdot, \mathcal{L}_\diamond, \mathcal{L}_\vartriangleright)$ is an $A$-module Malcev-Poisson algebra of  $(A^{C},\circ,\{\cdot,\cdot\})$.
\end{proof}
Now, we introduce the notion of weighted relative Rota-Baxter operators of weight $\lambda\in \mathbb{K}$ of Malcev-Poisson algebras generalizing the notion of weighted Rota-Baxter operators of weight $\lambda\in \mathbb{K}$.
\begin{defn}
Let $(A,  \circ)$ be a commutative associative algebra and $( V, \circ_{V}, \mu)$ an $A$-module associative algebras.
A linear map $\mathcal{R}:V\rightarrow A$ is called a {\bf weighted relative Rota-Baxter operator} of weight $\lambda$ on a commutative associative algebra $(A,\circ)$ with respect to an $A$-module associative algebras if $\mathcal{R}$ satisfies, for all $a,b\in V$,
  \begin{equation}
  \label{o-opcomass}\mathcal{R}(a)\circ \mathcal{R}(b)=\mathcal{R}(\mu(\mathcal{R}(a))b+\mu(\mathcal{R}(b))a+\lambda a\circ_{V} b).
  \end{equation}
  \end{defn}
  \begin{ex}
  When $(V, \circ_{V}, \mu)=(A, \circ, L_{\circ})$,    \eqref{o-opcomass} becomes
\begin{eqnarray}
&&\label{RB1}\mathcal{R}(x)\circ\mathcal{R}(y)=\mathcal{R}(\mathcal{R}(x)\circ y+x \circ \mathcal{R}(y)+\lambda x\circ y),
\end{eqnarray}
and \eqref{RB1}  implies that $\mathcal{R}: A\to A$ is a weighted Rota-Baxter operator of weight $\lambda\in \mathbb{K}$ on the commutative associative algebra $(A, \circ)$.
  \end{ex}
  \begin{lem}\label{lem:comasstocommdendtri}
Let $(A, \circ)$ be a commutative associative   algebra and $(V, \circ_{V}, \mu)$ its $A$-module algebra.
Let  $\mathcal{R}: V\to A$  be a weighted relative Rota-Baxter operator of weight $\lambda\in \mathbb{K}$ on $A$ associated to
$(V, \circ_{V}, \mu)$. Then, there exists a commutative dendriform trialgebra algebra structure on $V$ given  
for all $a, b\in V$ by
\begin{eqnarray}\label{comass==>comden}
 a\cdot b=\lambda a \circ_{V} b, \quad a\vartriangleright b=\mu(\mathcal{R}(a))b.
\end{eqnarray}
\end{lem}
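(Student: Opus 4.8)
The plan is to verify directly the three defining axioms of a commutative dendriform trialgebra for the pair of operations $(\cdot,\vartriangleright)$ introduced in \eqref{comass==>comden}, reading off each axiom from exactly one of the structural ingredients already at hand: the commutative associativity of $\circ_V$, the representation identity $\mu(x\circ y)=\mu(x)\mu(y)$, the $A$-module associative condition \eqref{modulecomass}, and the defining relation \eqref{o-opcomass} of the weighted relative Rota-Baxter operator. There is no conceptual difficulty; the work is purely a matter of matching each axiom to the correct hypothesis.

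First I would check that $(V,\cdot)$ is a commutative associative algebra. Since $a\cdot b=\lambda\, a\circ_V b$ and $(V,\circ_V)$ is commutative associative, commutativity is immediate, and associativity follows from $(a\cdot b)\cdot c=\lambda^2\,(a\circ_V b)\circ_V c=\lambda^2\, a\circ_V(b\circ_V c)=a\cdot(b\cdot c)$.

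Next comes the first dendriform axiom $(a\circ b)\vartriangleright c=a\vartriangleright(b\vartriangleright c)$, in which $\circ$ denotes the induced sub-adjacent product $a\circ b=a\vartriangleright b+b\vartriangleright a+a\cdot b=\mu(\mathcal{R}(a))b+\mu(\mathcal{R}(b))a+\lambda\, a\circ_V b$ on $V$. The crucial observation is that this combination is precisely the argument appearing inside $\mathcal{R}$ on the right-hand side of \eqref{o-opcomass}, so that $\mathcal{R}(a\circ b)=\mathcal{R}(a)\circ\mathcal{R}(b)$. Consequently $(a\circ b)\vartriangleright c=\mu(\mathcal{R}(a\circ b))c=\mu(\mathcal{R}(a)\circ\mathcal{R}(b))c=\mu(\mathcal{R}(a))\mu(\mathcal{R}(b))c$, the last equality using that $\mu$ is a representation of $(A,\circ)$; and this equals $a\vartriangleright(b\vartriangleright c)=\mu(\mathcal{R}(a))(\mu(\mathcal{R}(b))c)$. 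This is the step I expect to require the most care, since it is the only place where the Rota-Baxter relation is genuinely invoked, and it hinges on recognizing the sub-adjacent product as that very argument.

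Finally, for the second dendriform axiom $(a\vartriangleright b)\cdot c=a\vartriangleright(b\cdot c)$, I would expand both sides using \eqref{comass==>comden}: the left side equals $\lambda\,(\mu(\mathcal{R}(a))b)\circ_V c$ and the right side equals $\lambda\,\mu(\mathcal{R}(a))(b\circ_V c)$, so the identity reduces exactly to the $A$-module associative condition \eqref{modulecomass} taken at $x=\mathcal{R}(a)$. This establishes all three axioms and shows that $(V,\cdot,\vartriangleright)$ is a commutative dendriform trialgebra. No single step presents a real obstacle; the only subtlety is the bookkeeping in the first axiom described above.
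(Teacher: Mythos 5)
Your proof is correct, and it is essentially the argument the paper intends: the paper states Lemma \ref{lem:comasstocommdendtri} without proof (it is only invoked later, in the proof of Theorem \ref{thm:malpoitopostmalpoi}), treating the verification as routine. Your direct check covers exactly what is needed — commutativity and associativity of $\cdot$ by rescaling $\circ_V$, the axiom $(a\circ b)\vartriangleright c=a\vartriangleright(b\vartriangleright c)$ via the identity $\mathcal{R}(a\circ b)=\mathcal{R}(a)\circ\mathcal{R}(b)$ coming from \eqref{o-opcomass} together with $\mu(x\circ y)=\mu(x)\mu(y)$, and the axiom $(a\vartriangleright b)\cdot c=a\vartriangleright(b\cdot c)$ as an instance of \eqref{modulecomass} at $x=\mathcal{R}(a)$ — and correctly identifies the sub-adjacent product as the argument of $\mathcal{R}$ in the Rota-Baxter relation, which is the only nontrivial observation.
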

\begin{defn}
  Let $(A, [\cdot,\cdot])$ be a Malcev algebra and $(V, [\cdot,\cdot]_V, \varrho)$ an $A$-module Malcev algebra.
 A linear map $\mathcal{R}:V\rightarrow A$ is called a {\bf weighted relative Rota-Baxter operator} of weight $\lambda$ on a Malcev algebra $(A, [\cdot,\cdot])$ with respect to an $A$-module Malcev   algebra $(V, [\cdot,\cdot]_V, \varrho)$  if , for any $a,b\in V$,
\begin{equation}
\label{o-oplie}[\mathcal{R}(a), \mathcal{R}(b)]=\mathcal{R}(\varrho(\mathcal{R}(a))b-\varrho(\mathcal{R}(b))a+\lambda[a,b]_V).
\end{equation}
\end{defn}
\begin{ex}
When $(V, [\cdot,\cdot]_V, \varrho)=(A, [\cdot,\cdot], ad_{[,]})$,     \eqref{o-oplie} becomes
\begin{eqnarray}
&&\label{RB2}[\mathcal{R}(x), \mathcal{R}(y)]=\mathcal{R}([\mathcal{R}(x), y]+[x, \mathcal{R}(y)]+\lambda[x,y]),
\end{eqnarray}
and  \eqref{RB2} implies that $\mathcal{R}: A\to A$ is a weighted Rota-Baxter operator of weight $\lambda\in \mathbb{K}$ on the Malcev algebra $(A, [\cdot,\cdot])$.
  \end{ex}
  \begin{lem}\label{lem:liealgtopostliealg}
Let $(A, [\cdot,\cdot])$ be a Malcev  algebra and $(V, [\cdot,\cdot]_V, \varrho)$ its $A$-module  algebra.
Let  $\mathcal{R}: V\to A$  be a weighted relative Rota-Baxter operator of weight $\lambda\in \mathbb{K}$ on $A$ associated to
$(V, [\cdot,\cdot]_V, \varrho)$. Then there exists a post-Malcev algebra structure on $V$ given for all $a, b\in V$ by
\begin{eqnarray}\label{lie==>Postlie}
\{a,b\}=\lambda[a,b]_V, \quad a\diamond b=\varrho(\mathcal{R}(a))b.
\end{eqnarray}
\end{lem}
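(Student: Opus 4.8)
The plan is to verify directly that the pair of operations $\{a,b\}=\lambda[a,b]_V$ and $a\diamond b=\varrho(\mathcal{R}(a))b$ fulfils the four defining identities \eqref{postmalcevalgebras1}--\eqref{postmalcevalgebras4} of Definition \ref{def:postmalcev}, with $\{\cdot,\cdot\}=\lambda[\cdot,\cdot]_V$ playing the role of the ambient Malcev bracket. Throughout I write $\llbracket a,b\rrbracket=a\diamond b-b\diamond a+\{a,b\}=\varrho(\mathcal{R}(a))b-\varrho(\mathcal{R}(b))a+\lambda[a,b]_V$ for the induced sub-adjacent bracket, which is the bracket entering the axioms through \eqref{malcevbracket}. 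Two preliminary facts organize everything. First, $(V,\lambda[\cdot,\cdot]_V)$ is again a Malcev algebra, since rescaling an anti-symmetric bracket by $\lambda$ multiplies both sides of the Sagle identity \eqref{maltsev2} by $\lambda^{3}$. Second, the weight-$\lambda$ relative Rota--Baxter condition \eqref{o-oplie} says precisely that $\mathcal{R}$ intertwines $\llbracket\cdot,\cdot\rrbracket$ with the bracket of $A$, namely $\mathcal{R}(\llbracket a,b\rrbracket)=[\mathcal{R}(a),\mathcal{R}(b)]$.

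With these in hand I would substitute the definitions into each axiom, use $\mathcal{R}(\llbracket a,b\rrbracket)=[\mathcal{R}(a),\mathcal{R}(b)]$ to push every $\mathcal{R}$ of a sub-adjacent bracket out to a bracket of $A$, and then factor out the common scalar. Every term of \eqref{postmalcevalgebras1} and of \eqref{postmalcevalgebras2} contains exactly one factor $\lambda$ (from the single occurrence of the ambient bracket $\lambda[\cdot,\cdot]_V$), every term of \eqref{postmalcevalgebras3} contains $\lambda^{2}$, while \eqref{postmalcevalgebras4} contains no factor of $\lambda$ at all once the intertwining relation has absorbed it. Dividing out $\lambda$, $\lambda^{2}$ and $1$ respectively turns the four axioms into $\lambda$-free identities in $V$.

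It then remains to recognize these identities. Writing $X=\mathcal{R}(x)$, $Y=\mathcal{R}(y)$, $Z=\mathcal{R}(z)$, the reduced form of \eqref{postmalcevalgebras1} is exactly the module compatibility \eqref{repKalgebras1} with $A$-arguments $X,Z$ and module arguments $y,t$; the reduced form of \eqref{postmalcevalgebras2} is \eqref{repKalgebras2} with $A$-arguments $X,Y$ and module arguments $z,t$; and the reduced form of \eqref{postmalcevalgebras3} is \eqref{repKalgebras3} with $A$-argument $X$ and module arguments $z,y,t$. In each of these three the match is term-by-term, so they cost nothing beyond careful substitution. The remaining identity \eqref{postmalcevalgebras4} is the only one free of $[\cdot,\cdot]_V$: after reduction it is a purely operatorial statement in the $\varrho(\mathcal{R}(\cdot))$, asserting that $\mathcal{L}_\diamond=\varrho\circ\mathcal{R}$ is a representation of the sub-adjacent Malcev algebra $(V,\llbracket\cdot,\cdot\rrbracket)$ on $V$; this is the representation identity \eqref{representation} evaluated at $X,Y,Z$, again using $\mathcal{R}(\llbracket a,b\rrbracket)=[\mathcal{R}(a),\mathcal{R}(b)]$ to identify $\varrho$ of the iterated sub-adjacent bracket with $\varrho([[X,Y],Z])$.

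I expect \eqref{postmalcevalgebras4} to be the main obstacle. The first three reductions are essentially bilinear term-matching against \eqref{repKalgebras1}--\eqref{repKalgebras3}, but the fourth is where the genuinely non-associative (rather than Lie) nature is felt: it is an identity among triple products of the operators $\varrho(X),\varrho(Y),\varrho(Z)$, and reconciling the various operator orderings requires combining the representation identity \eqref{representation} with the anti-symmetry of the bracket and, where the orderings do not align directly, with the Sagle identity \eqref{maltsev2} in $A$. This is the step where the sign and composition-order bookkeeping must be carried out most carefully.
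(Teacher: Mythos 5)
Your strategy is the right one, and most of it is carried out correctly: the intertwining relation $\mathcal{R}(\llbracket a,b\rrbracket)=[\mathcal{R}(a),\mathcal{R}(b)]$ is exactly \eqref{o-oplie}, the scalar counts ($\lambda$, $\lambda$, $\lambda^{2}$) are right, and the reduced forms of \eqref{postmalcevalgebras1}, \eqref{postmalcevalgebras2}, \eqref{postmalcevalgebras3} are indeed, term by term, \eqref{repKalgebras1}, \eqref{repKalgebras2}, \eqref{repKalgebras3} evaluated at $\mathcal{R}$-images; the $\lambda^{3}$-scaling argument for $(V,\lambda[\cdot,\cdot]_V)$ being Malcev is also fine.

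The gap is precisely at \eqref{postmalcevalgebras4}, and it cannot be closed the way you propose. Substituting \eqref{lie==>Postlie} and using the intertwining relation, the printed axiom \eqref{postmalcevalgebras4} reduces to the operator identity on $V$
\begin{equation*}
\varrho([Y,Z])\varrho(X)=\varrho([[X,Y],Z])+\varrho(Y)\varrho([X,Z])+\varrho(X)\varrho(Y)\varrho(Z)-\varrho(Z)\varrho(X)\varrho(Y),
\end{equation*}
with $X=\mathcal{R}(x)$, $Y=\mathcal{R}(y)$, $Z=\mathcal{R}(z)$, and this is \emph{not} \eqref{representation} evaluated at $(X,Y,Z)$. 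Worse, it is not derivable from the hypotheses at all: combining it with \eqref{representation} evaluated at $(Y,X,Z)$ and using antisymmetry of the bracket forces $\varrho([Y,Z])\varrho(X)=\varrho(X)\varrho(Y)\varrho(Z)-\varrho(Z)\varrho(X)\varrho(Y)$, which is false in general. Concretely, take $A=\mathfrak{gl}_2(\mathbb{K})$, $V=M_2(\mathbb{K})$ with $[\cdot,\cdot]_V=0$ and $\varrho(a)v=av$ (a Lie, hence Malcev, module; \eqref{repKalgebras1}--\eqref{repKalgebras3} are vacuous since every term contains $[\cdot,\cdot]_V$), and $\mathcal{R}=\mathrm{id}$, which satisfies \eqref{o-oplie} for every $\lambda$. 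Then $a\diamond b=ab$ and $\llbracket a,b\rrbracket=ab-ba$, and \eqref{postmalcevalgebras4} with $x=e_{11}$, $y=e_{12}$, $z=e_{21}$, $t=e_{21}$ gives $0$ on the left-hand side and $-2e_{21}$ on the right-hand side. So no combination of \eqref{representation}, antisymmetry, and Sagle's identity \eqref{maltsev2} can complete the step you flagged as the main obstacle: the identity you would be trying to prove is false.

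What this means is that either the fourth identity of Definition \ref{def:postmalcev} is misprinted in the paper --- the identity obtained by splitting Sagle's identity \eqref{maltsev2}, namely
\begin{equation*}
\{x,z\}\diamond(y\diamond t)=\{\{x,y\},z\}\diamond t-x\diamond(\{y,z\}\diamond t)+y\diamond(x\diamond(z\diamond t))-z\diamond(y\diamond(x\diamond t)),
\end{equation*}
is the one this construction actually satisfies --- or the lemma is false as stated. Under the corrected axiom, your reduction lands, after a one-line rearrangement, exactly on \eqref{representation} at $(X,Y,Z)$, so the step you anticipated to be the hardest becomes immediate and needs no appeal to Sagle's identity. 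As written, however, the claim that the reduced \eqref{postmalcevalgebras4} ``is the representation identity evaluated at $X,Y,Z$'' is incorrect, and this part of the proposal is a genuine gap.
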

  \begin{defn}
Let $(A, [\cdot,\cdot], \circ)$ be a Malcev-Poisson algebra and $(V, [\cdot,\cdot], \circ, \varrho, \mu)$ an $A$-module Malcev-Poisson algebra.
 A linear map $\mathcal{R}: V\to A$ is called a weighted relative Rota-Baxter operator of weight $\lambda\in \mathbb{K}$ associated to
$(V, [\cdot,\cdot], \circ, \varrho, \mu)$ if  $\mathcal{R}$ is both
a weighted relative Rota-Baxter operator of weight $\lambda\in \mathbb{K}$ on the commutative associative algebra $(A,\circ)$ and a weighted relative Rota-Baxter operator of weight $\lambda\in \mathbb{K}$  on the Lie algebra
$(A,[\cdot,\cdot])$.
\end{defn}
\begin{ex}
 A weighted relative Rota-Baxter operator of weight $\lambda\in \mathbb{K}$ on a Malcev-Poisson  algebra associated to
$(A, [\cdot,\cdot], \circ, \mathrm{ad}_{[,]}, L_{\circ})$ is called a  weighted Rota-Baxter operator of weight $\lambda\in \mathbb{K}$ on the Malcev-Poisson algebra $(A, [\cdot,\cdot], \circ)$.
 \end{ex}
 \begin{thm}\label{thm:malpoitopostmalpoi}
Let $(A, \circ,[\cdot,\cdot])$ be a Malcev-Poisson  algebra and $(V, [\cdot,\cdot], \circ, \varrho, \mu)$ be an $A$-module Malcev-Poisson algebra.
Let  $\mathcal{R}: V\to A$  be a weighted relative Rota-Baxter operator of weight $\lambda\in \mathbb{K}$ associated to
$(V, [\cdot,\cdot], \circ, \varrho, \mu)$.
Define new  operations $[\cdot,\cdot], \diamond, \cdot, \vartriangleright: V\times V\to V$ for any $a, b\in V$ by
\begin{eqnarray}\label{malcev==>Postmalcev}
\{a,b\}=\lambda[a,b]_V,\quad a\diamond b=\varrho(\mathcal{R}(a))b,\quad a\cdot b=\lambda a \circ_{V} b,\quad a\vartriangleright b=\mu(\mathcal{R}(a))b.
\end{eqnarray}
Then $(V, \{\cdot,\cdot\}, \diamond, \cdot, \vartriangleright)$ is a post-Malcev-Poisson algebra and $\mathcal{R}$ is a homomorphism of Malcev-Poisson algebras
from the associated Malcev-Poisson algebra $V ^{c}$ of $(V, \{\cdot,\cdot\}, \diamond, \cdot, \vartriangleright)$ to $(A, [\cdot,\cdot], \circ)$.
\end{thm}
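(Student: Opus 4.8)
The plan is to assemble the post-Malcev-Poisson structure on $V$ from its three constituent pieces, verify the four cross-compatibility conditions \eqref{eq:postmalcpoi1}--\eqref{eq:postmalcpoi4}, and then read off the homomorphism property directly from the defining equations of $\mathcal{R}$. Since $\mathcal{R}$ is by hypothesis simultaneously a weighted relative Rota-Baxter operator on the Malcev algebra $(A,[\cdot,\cdot])$ and on the commutative associative algebra $(A,\circ)$, Lemma \ref{lem:liealgtopostliealg} already gives that $(V,\{\cdot,\cdot\},\diamond)$ is a post-Malcev algebra (in particular $(V,\{\cdot,\cdot\})$ is a Malcev algebra), and Lemma \ref{lem:comasstocommdendtri} already gives that $(V,\cdot,\vartriangleright)$ is a commutative dendriform trialgebra (in particular $(V,\cdot)$ is commutative associative). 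It then remains, for the Malcev-Poisson part of the definition, to check that $(V,\{\cdot,\cdot\},\cdot)$ satisfies the Leibniz rule \eqref{eq:malpoisalg}; since $\{a,b\}=\lambda[a,b]_V$ and $a\cdot b=\lambda a\circ_V b$, both sides of the Leibniz rule carry a common factor $\lambda^2$, and after cancelling it the identity reduces to the Leibniz rule of the ambient Malcev-Poisson algebra $(V,[\cdot,\cdot]_V,\circ_V)$, which holds because $V$ is an $A$-module Malcev-Poisson algebra.

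Next I would verify \eqref{eq:postmalcpoi1}--\eqref{eq:postmalcpoi4} by substitution. The conditions \eqref{eq:postmalcpoi1} and \eqref{eq:postmalcpoi2} are the direct ones: expanding $\diamond,\cdot,\vartriangleright$ and cancelling the scalar $\lambda$, \eqref{eq:postmalcpoi1} becomes exactly \eqref{modulemalpoi1} with the representation argument set to $\mathcal{R}(x)$, while \eqref{eq:postmalcpoi2} becomes \eqref{modulemalpoi3} with the representation argument set to $\mathcal{R}(z)$ (using commutativity of $\circ_V$ to match the two terms). The remaining two are the crossed ones and are where the work lies. In \eqref{eq:postmalcpoi3} the left-hand side contains $\mathcal{R}$ applied to the sub-adjacent product $\mu(\mathcal{R}(y))z+\mu(\mathcal{R}(z))y+\lambda y\circ_V z$, which I rewrite as $\mathcal{R}(y)\circ\mathcal{R}(z)$ using the Rota-Baxter identity \eqref{o-opcomass}, and then expand $\varrho(\mathcal{R}(y)\circ\mathcal{R}(z))$ by the Malcev-Poisson representation compatibility \eqref{eq:repmalcpoisalg1}; the two resulting terms are precisely $z\vartriangleright(y\diamond x)+y\vartriangleright(z\diamond x)$. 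Symmetrically, in \eqref{eq:postmalcpoi4} I rewrite $\mathcal{R}$ of the sub-adjacent bracket $\varrho(\mathcal{R}(x))z-\varrho(\mathcal{R}(z))x+\lambda[x,z]_V$ as $[\mathcal{R}(x),\mathcal{R}(z)]$ using \eqref{o-oplie}, then expand $\mu([\mathcal{R}(x),\mathcal{R}(z)])$ by \eqref{eq:repmalcpoisalg2}; the outcome matches $x\diamond(z\vartriangleright y)-z\vartriangleright(x\diamond y)$.

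Finally, the homomorphism claim for $\mathcal{R}\colon V^{c}\to(A,[\cdot,\cdot],\circ)$ is immediate, and is in fact the same computation used to close the crossed conditions. The associated operations on $V^{c}$ are $a\circ_{V^{c}} b=\mu(\mathcal{R}(a))b+\mu(\mathcal{R}(b))a+\lambda a\circ_V b$ and $[a,b]_{V^{c}}=\varrho(\mathcal{R}(a))b-\varrho(\mathcal{R}(b))a+\lambda[a,b]_V$, so applying $\mathcal{R}$ and invoking \eqref{o-opcomass} and \eqref{o-oplie} respectively gives $\mathcal{R}(a\circ_{V^{c}}b)=\mathcal{R}(a)\circ\mathcal{R}(b)$ and $\mathcal{R}([a,b]_{V^{c}})=[\mathcal{R}(a),\mathcal{R}(b)]$. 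I expect the main obstacle to be organizational rather than conceptual: the base Malcev bracket of the constructed structure is written $\{\cdot,\cdot\}=\lambda[\cdot,\cdot]_V$ in the theorem but plays the role of the template symbol $[\cdot,\cdot]$ in \eqref{eq:postmalcpoi2}, whereas the template symbol $\{\cdot,\cdot\}$ in \eqref{eq:postmalcpoi4} is the \emph{sub-adjacent} bracket $\varrho(\mathcal{R}(\cdot))\cdot-\varrho(\mathcal{R}(\cdot))\cdot+\lambda[\cdot,\cdot]_V$; one must feed each template condition the correct bracket, and the crossed identities only close up once the Rota-Baxter rewriting is paired with exactly the right one of the two representation-compatibility identities \eqref{eq:repmalcpoisalg1}--\eqref{eq:repmalcpoisalg2}.
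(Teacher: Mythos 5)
Your proposal is correct and follows essentially the same route as the paper's own proof: both invoke Lemmas \ref{lem:comasstocommdendtri} and \ref{lem:liealgtopostliealg} for the post-Malcev and commutative dendriform trialgebra parts, obtain \eqref{eq:postmalcpoi1}--\eqref{eq:postmalcpoi2} directly from \eqref{modulemalpoi1}--\eqref{modulemalpoi3}, and close the crossed conditions \eqref{eq:postmalcpoi3}--\eqref{eq:postmalcpoi4} by the Rota-Baxter rewriting \eqref{o-opcomass}/\eqref{o-oplie} followed by the representation compatibilities \eqref{eq:repmalcpoisalg1}/\eqref{eq:repmalcpoisalg2}, exactly as the paper does. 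The only differences are cosmetic and in your favor: you additionally spell out the Leibniz rule for $(V,\{\cdot,\cdot\},\cdot)$ and the homomorphism property of $\mathcal{R}$, both of which the paper leaves implicit.
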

\begin{proof}
By Lemma \ref{lem:comasstocommdendtri} and Lemma \ref{lem:liealgtopostliealg}, we know that $(V, \{\cdot,\cdot\}, \diamond)$ is a post-Malcev algebra and $(V, \cdot, \vartriangleright)$ is a commutative dendriform trialgebra.

Furthermore, for any $a,b,c\in V$, by   \eqref{modulemalpoi1} and \eqref{modulemalpoi3}, 
\begin{eqnarray*}
  a\diamond(b\cdot c)&=& \lambda\varrho(\mathcal{R}(a))(b\circ_V c) \\
   &=& \lambda( \varrho(\mathcal{R}(a))b)\circ_V c+\lambda b\circ_V(\varrho(\mathcal{R}(a))c) \\
   &=& (a\diamond b)\cdot c+ b\cdot (a\diamond c),
\\
  \{a, c\vartriangleright b\}&=& \lambda [a,\mu(\mathcal{R}(c))b]_V \\
   &=& -\lambda( \varrho(\mathcal{R}(c))a)\circ_V b+\lambda\mu(\mathcal{R}(c))[a,b]_V \\
   &=& -(c\diamond a)\cdot b+ c\vartriangleright  \{a, b\}.
\end{eqnarray*}
This implies that   \eqref{eq:postmalcpoi1} and \eqref{eq:postmalcpoi2} hold.
Similarly, by   \eqref{eq:repmalcpoisalg1},  \eqref{eq:postmanifold-operations} and \eqref{o-opcomass}, we can verify that  \eqref{eq:postmalcpoi3} holds. In fact
\begin{eqnarray*}
  (b\circ c)\diamond a &=& (b\vartriangleright c+c\vartriangleright b+b\cdot c)\diamond a  \\
   &=& \varrho\big(\mathcal{R}(\mu(\mathcal{R}(b))c + \mu(\mathcal{R}(c))b+ \lambda b\circ_V c)\big)a\\
   &=&  \varrho(\mathcal{R}( b)\circ \mathcal{R}( c))a\\
   &=& \mu(\mathcal{R}( c))\varrho(\mathcal{R}( b))a +\mu(\mathcal{R}( b))\varrho(\mathcal{R}( c))a\\
   &=& c\vartriangleright(b\diamond a)+ b\vartriangleright (c\diamond a).
\end{eqnarray*}
Moreover, by \eqref{eq:repmalcpoisalg2},  \eqref{eq:postmanifold-operations} and \eqref{o-oplie}, 
\begin{eqnarray*}
\{a, c\}_{\mathcal{R}} \vartriangleright b &=&(a\diamond c- c\diamond a+ [a,c]) \vartriangleright b\\
&=& \mu\big(\mathcal{R}( \varrho(\mathcal{R}(a))c-\varrho(\mathcal{R}(c))a+ \lambda [a,c]_V)\big)b\\
&=&\mu([\mathcal{R}(a),\mathcal{R}(c)])b\\
&=&\varrho(\mathcal{R}(a))\mu(\mathcal{R}(c))b-\mu(\mathcal{R}(c))\varrho(\mathcal{R}(a))b\\
&=&a \diamond (c\vartriangleright b)-c\vartriangleright(a \diamond b),
\end{eqnarray*}
which implies that \eqref{eq:postmalcpoi4} holds. Then  $(V, \{\cdot,\cdot\}, \diamond, \cdot, \vartriangleright)$ is a post-Malcev-Poisson algebra and $\mathcal{R}$ is a homomorphism of Malcev-Poisson algebras
from the associated Malcev-Poisson algebra $V ^{c}$ of $(V, \{\cdot,\cdot\}, \diamond, \cdot, \vartriangleright)$ to $(A, \circ, [\cdot,\cdot])$.
\end{proof}
\begin{ex}
 Let $(A, \circ,[\cdot,\cdot])$ be a Malcev-Poisson algebra and $\mathcal{R}: A\to A$  be a weighted  Rota-Baxter operator of weight $\lambda\in \mathbb{K}$.
Define  new operations $\{\cdot,\cdot\}, \diamond, \cdot, \vartriangleright: A\times A\to A$ for any $x, y\in A$ as 
\begin{eqnarray}\label{malcevpoistoPostmalpoi}
\{x,y\}=\lambda[x,y],\quad x\diamond y=[\mathcal{R}(x), y],\quad x\cdot y=\lambda x \circ y,\quad x\vartriangleright y=\mathcal{R}(a)\circ b.
\end{eqnarray}
 Then $(A, \{\cdot,\cdot\}, \diamond, \cdot, \vartriangleright)$ is a post-Malcev-Poisson algebra and $\mathcal{R}$ is a homomorphism of Malcev-Poisson algebras
from the associated Malcev-Poisson algebra $A ^{c}$ of $(A, \{\cdot,\cdot\}, \diamond, \cdot, \vartriangleright)$ to $(A, \circ, [\cdot,\cdot])$.
\end{ex}


\begin{thebibliography}{99}

\bibitem{Said} M. Ait Ben Haddou, S.  Benayadi, S. Boulmane, Malcev-Poisson-Jordan algebras, J. Algebra Appl. 15(9), 1650159, (2016) 26 pp.

\bibitem{Aguiar00}
M. Aguiar, Pre-Poisson algebras, Lett. Math. Phys. 54, (2000), 263-277

\bibitem{Aguiar1} M. Aguiar, Infinitesimal Hopf algebras, 
In: N. Andruskiewitsch, W. R. Ferrer Santos, H.-J. Schneider (Eds.), New trends in Hopf algebra theory, 
Proceedings of the Colloquium on Quantum Groups and Hopf Algebras, La Falda, Sierras de Cordoba, Argentina, August 9-13, 1999, Contemp. Math. 267, Amer. Math. Soc. Providence, RI, 2000, 1-29

\bibitem{Aguiar2} M. Aguiar, On the associative analog of Lie bialgebras, J. Algebra, 244 (2001), 492-532.

\bibitem{Aguiar3} M. Aguiar, Infinitesimal bialgebras, pre-Lie and dendriform algebras, In: J. Bergen, S. Catoiu, W. Chin, Hopf algebras, Lecture Notes in Pure and Appl. Math. 237, Marcel Dekker, New York, 2004, 1-33

\bibitem{baxter}
G. Baxter, An analytic problem whose solution follows from a simple algebraic identity, Pacific J. Math. 
10, (1960) 731–742

\bibitem{Bai2010}
C. Bai, Double constraction of Frobenius algebras, Connes cocycles and their duality, J. Noncommut. Geom. (2010) 475-530 

\bibitem{BaiNi}
C. Bai, X. Ni, Pre-alternative algebras and pre-alternative bialgebras, Pacific J. Math.
248, (2010) 355–90

\bibitem{BaiGuoNi}
C. Bai,  Guo, L., Ni, X., $\mathcal{O}$-operators on associative algebras and associative Yang-Baxter equations. Pac. J. Math.
256, (2012) 257–289 

\bibitem{R.J1} R.J. Baxter, Partition function fo the eight-vertex lattice model. Ann. Physics, 70, (1972) 193-228

\bibitem{R.J2} R.J. Baxter, Exactly solved models in statistical mechanics. Academic Press, London, 1982.

\bibitem{BCHM} K. Benali, T. Chtioui, A. Hajjaji, S. Mabrouk, Bialgebras, the Yang-Baxter equation and Manin triples for mock-Lie algebras, Acta Comment. Univ. Tartu. Math. 27, (2023) 211-233

\bibitem{BelavinDrinfeld}
A. A. Belavin, V. G. Drinfeld, Solutions of the classical Yang-Baxter equation for simple Lie
algebras, Funct. Anal. Appl. 16, (1982) 159-180

\bibitem{Burde16}
D. Burde, K. Dekimpe,
Post-Lie algebra structures on pairs of Lie algebras,
\emph{J. Algebra}, 464, (2016) 226-245

\bibitem{Chari}
V. Chari, A. Pressley, A Guide to Quantum Groups, Cambridge University Press, Cambridge
(1994)

\bibitem{drinfeld}  V. Drinfeld, Hamiltonian structure on the Lie groups, Lie bialgebras and the geometric sense of the classical
Yang-Baxter equations, Soviet Math. Dokl. 27, (1983) 68-71

\bibitem{Fard}
K. Ebrahimi-Fard, A. Lundervold, H. Munthe-Kaas,
On the Lie enveloping algebra of a post-Lie algebra,
J. Lie Theory. 25, no. 4, (2015) 1139-1165

\bibitem{ElduqueMyung-MutAltAlgs1994}  
A. Elduque, H. C. Myung, Mutations of Alternative Algebras, Mathematics and Its Applications 278, Kluwer Academic Publishers, 1994

\bibitem{goncharov}
M. E. Goncharov, Structures of Malcev Bialgebras on a simple non-Lie Malcev algebra, Commun. Algebra, 40(8), (2012) 3071–3094
(arXiv:1008.4214vl)

\bibitem{gt}
F. G\"{u}rsey, C.-H. Tze, On the role of division, Jordan and related algebras in particle physics, World Scientific, Singapore, 1996

\bibitem{Fattoum} F. Harrathi, S. Mabrouk,  O. Ncib,  S. Silvestrov, Kupershmidt operators on Hom-Malcev algebras and their deformation, International Journal of Geometric Methods in Modern Physics 20(03), (2023) 2350046

\bibitem{postMalcev}
F. Harrathi, S. Mabrouk, O.  Ncib, S. Silvestrov,  Malcev Yang-Baxter equation, weighted $\mathcal{O}$-operators on Malcev algebras and post-Malcev algebras. Hacettepe Journal of Mathematics and Statistics, 52(5), (2023) 1151-1171 


\bibitem{hegazi}
A. S. Hegazi, H. Abdelwahab, A. J. Calderon Martin, Classification of nilpotent Malcev algebras of small dimensions over arbitrary fields of characteristic not 2. Algebras and Representation Theory, 21,  (2018) 19-45



\bibitem{Kerdman}
F. S. Kerdman, Analytic Moufang loops in the large, Alg. Logic, 18, (1980) 325-347

\bibitem{K2}
B. A. Kupershmidt, What a Classical $r$-Matrix Really Is, J. Nonlin. Math. Phys. 6(4), (1999) 448-488  

\bibitem{Kuzmin}
E. N. Kuzmin, Malcev algebras and their representations, Algebra and Logic, 7, (1968)
233-244

\bibitem{kuzmin2}
E. N. Kuzmin, The connection between Malcev algebras and analytic Moufang loops, Alg. Logic, 10, (1971) 1-14



\bibitem{Loday01}
J.-L.~Loday, Dialgebras, In: J.-L. Loday A. Frabetti, F. Chapoton F. Goichot (Eds.), Dialgebras and Related Operads, Lecture Notes in Mathematics, 1763, (2001) 7-66

\bibitem{LuiBai}
G. Liu, C. Bai, A bialgebra theory for transposed Poisson algebras via anti-pre-Lie bialgebras and anti-pre-Lie Poisson bialgebras, Commun. Contemp. Math. 26(08), (2024) 2350050

\bibitem{Sami} S. Mabrouk, Deformation of Kupershmidt operators and Kupershmidt-Nijenhuis structures of a Malcev algebra, Hacet. J. Math. Stat. 51(1), (2022)  
199-217 

\bibitem{Sarra} S. Madariaga, Splitting of operations for alternative and Malcev structures, Commun. Algebra, 45 (1), (2014) 183-197

\bibitem{malcev}
A. I. Malcev, Analytic loops, Mat. Sb. 36, (1955) 569-576

\bibitem{myung}
H. C. Myung, Malcev-admissible algebras, Progress in Math. 64, Birkh\"{a}user, Boston, MA, 1986

\bibitem{nagy}
P. T. Nagy, Moufang loops and Malcev algebras, Sem. Sophus Lie, 3, (1993), 65-68

\bibitem{NiBai}
X. Ni, C. Bai, Poisson bialgebras, J. Math. Phys. 54, (2013) 023515 

\bibitem{okubo}
S. Okubo, Introduction to octonion and other non-associative algebras in physics, Cambridge Univ. Press, Cambridge, UK, 1995


\bibitem{sabinin}
L. V. Sabinin, Smooth quasigroups and loops, Kluwer Academic, The Netherlands, 1999.

\bibitem{sagle61Malcevalgebras} A. Sagle, Malcev algebras, Trans. Amer. Math. Soc., 101:3, (1961) 426–458

\bibitem{Semenov}
M. A. Semenov-Tian-Shansky, What is a classical R-matrix?, Funct. Anal. Appl. 17, (1983) 259-272

\bibitem{Shestakov} I. P. Shestakov, Speciality problem for Malcev algebras and Poisson Malcev algebras,
In: R. Costa, A. Grishkov, H. Guzzo Jr., L. A. Peresi (Eds.), Nonassociative Algebra and its Applications, Lecture Notes in Pure and Applied
Mathematics, Vol. 211, Dekker, New York, 2000, 365–371.

\bibitem{Vallette}
B. Vallette,  Homology of generalized partition posets, J. Pure Appl. Algebra, 208, (2007), no. 2, 699-725

\bibitem{versh}
V. Vershinin,  On Poisson-Malcev Structures, Acta Applicandae Mathematicae, 75, (2003) 281-292 

\bibitem{zhang}
T. Zhang, L. Zhang, R. Xie,  Unified products for Malcev algebras, arXiv:2108.09787 [math.RA] (2021)

\bibitem{yang}
C. N. Yang, Some exact results for the many-body problem in one dimension with replusive delta-function interaction, Phys. Rev. Lett. 19, 23 (1967) 1312-1315

\bibitem{sl2}
P. Yu, Q. Liu, C. Bai, L. Guo, Post-Lie algebra structures on the Lie algebra $\mathrm{sl}(2,\mathbb{C})$, Electron. J. Linear Algebra,  23, (2012) 180-197

\end{thebibliography}
\end{document}